\newcommand{\Z}{\ensuremath{\mathbb{Z}}}
\newcommand{\A}{\ensuremath{\mathbb{A}}}
\newcommand{\cor}[1]{\mathcal{#1}}
\newcommand{\graffe}[1]{\{#1\}}
\newcommand{\gen}[1]{\langle #1\rangle}
\DeclareMathOperator{\ZG}{Z} 
\DeclareMathOperator{\Cyc}{C} 
\DeclareMathOperator{\wt}{wt} 
\DeclareMathOperator{\ab}{ab}
\DeclareMathOperator{\SG}{SG}
\DeclareMathOperator{\Jen}{L}
\DeclarePairedDelimiter{\ceil}{\lceil}{\rceil}
\newcommand{\ls}[2]{\gamma_{#1}(#2)}
\newtheorem{definition}{Definition}[section]
\newtheorem{lemma}[definition]{Lemma}
\newtheorem{theorem}[definition]{Theorem}
\newtheorem{proposition}[definition]{Proposition}
\newtheorem{corollary}[definition]{Corollary}
\newtheorem*{thx}{Acknowledgements}
\newenvironment{remark}[1][]{\refstepcounter{definition}\par\medskip
   \noindent \textbf{Remark~\thedefinition. #1} \rmfamily}{\medskip}
\newenvironment{example}[1][]{\refstepcounter{definition}\par\medskip
   \noindent \textbf{Example~\thedefinition. #1} \rmfamily}{\medskip}
\newenvironment{qs}[1][]{\refstepcounter{definition}\par\medskip
   \noindent \textbf{Question~\thedefinition. #1} \rmfamily}{\medskip}
   \newenvironment{qs*}[1][]{\par\medskip
   \noindent \textbf{#1} \rmfamily}{\medskip}
\numberwithin{equation}{section}
\author{Leo Margolis and Mima Stanojkovski\footnote{This research has been partially supported by the FWO (Research Foundation Flanders) and CIRM-FBK Trento in connection to their Research in Pairs Program.\newline
\indent{\itshape 2010 Mathematics Subject Classification.}
20C05, 20D15, 16U60.\newline
\indent{\itshape Keywords.} 
    Modular Isomorphism Problem, modular group algebra, finite $p$-groups, small group algebra.}}
\title{
On the Modular Isomorphism Problem for groups of class $3$ and obelisks 
}
\begin{document}

\maketitle

\begin{abstract}
We study the Modular Isomorphism Problem applying a combination of existing and new techniques. We make use of the small group algebra to give a positive answer for two classes of groups of nilpotency class $3$. We also introduce a new approach to derive properties of the lower central series of a finite $p$-group from the structure of the associated modular group algebra. Finally, we study the class of so-called $p$-obelisks which are highlighted by recent computer-aided investigations of the problem.
\end{abstract}


\normalem 

\section{Introduction}
The question of how strongly the group ring $RG$ of a group $G$ over a commutative ring $R$ dictates the structure of $G$ is still far from being well-understood. The most classical version of this problem that is still open today, in its full generality, states:

\begin{qs*}[Modular Isomorphism Problem (MIP).]
Let $k$ be a field of characteristic $p>0$ and let $G$ be a finite $p$-group. Let $H$ be another group. Does a ring isomorphism between $kG$ and $kH$ imply a group isomorphism between $G$ and $H$? In symbols:
\[
kG\cong kH \ \Longrightarrow \ G\cong H?
\]
\end{qs*}
\vspace{-13pt}\\
\noindent
If $H$ is a subgroup of the unit group of $kG$ such that $kG = kH$, then $H$ is said to be a \emph{group base} of $kG$. The (MIP) can thus be reformulated as: are all group bases of a modular group algebra of a finite $p$-group isomorphic?

An explicit formulation of the (MIP) can already be found in a famous survey of Brauer~\cite{Bra63}. Since then, the problem has been studied by many researchers: we refer to \cite{HS06, EK11} for an overview of most known results. New points of view and approaches to the problem can be found in the very recent papers \cite{BK19, Sak20, BdR20, MM20}. We study the (MIP) for certain classes of groups, inspired by some of these recent results and accessing the problem from different directions. 

We start by recalling the construction of the so-called small group algebra of $kG$ and its employment in the study of the (MIP):  among its biggest merits are Sandling's  positive solution of the (MIP) for groups with central elementary abelian derived subgroup \cite{San89} and Salim and Sandling's solution for groups of order $p^5$ \cite{SS96p5}.


We use the small group algebra to positively solve the (MIP) for two new classes of groups, cf.\ Theorem~\ref{th:MaxAbelianCentDerSubg} and Theorem~\ref{th:K_G}.
 In particular, for $p$ odd, our work completely covers 5 of 43 isoclinism classes of groups of order $p^6$, while 10 other classes are covered by known results.  
We also show that the strategy of Salim and Sandling for groups of order $p^5$ is not directly translatable to groups of order $p^6$ by
exhibiting a pair of non-isomorphic groups, of class $3$ and with elementary abelian derived subgroups,  whose small group algebras have isomorphic groups of units, cf. Example~\ref{ex:553vs554}.  We believe that the methods used to prove Theorems~\ref{th:MaxAbelianCentDerSubg} and \ref{th:K_G} can be applied to other families of groups with maximal subgroups of relatively small class.

We continue our investigation with the determination of a new invariant for certain $2$-generated groups of class $3$. 
In relation to this, we stress that, though the lower central series is a fundamental object in the study of finite $p$-groups, it is still quite unclear which properties of it are detected by the group algebra. For instance, it is not known in general whether group bases for the same algebra have the same nilpotency class and, even if they did, it is not known whether corresponding elements of the lower central series are isomorphic. 
Moreover, Example 2.1 from \cite{BK19}  makes it clear that the ideals corresponding to the lower central series in $kG$ are not canonical and thus they are probably not the best candidates for recognizing the lower central series of the group.
Via new quotient-in-quotient embedding techniques, 
we show that, for certain $2$-generated groups of nilpotency class $3$, 
the isomorphism types of the members of the lower central series of any group base are determined by $kG$, cf. Theorem~\ref{th:2gen-class3-orderG3}. 
With the aid of computer algebra systems, it can be checked that the last theorem 
does not follow from known results on the (MIP), contributing to the mystery around the connection between the lower central series of the group and the Lie structure of the  modular group algebra.
  
We conclude the present paper with the study of the (MIP) for a family of groups called $p$-obelisks. These groups are highlighted in computer-aided investigations for groups of order $5^6$ \cite{MM20} and, in some sense, are among  the   groups of order $5^6$ that are ``closest'' to being counterexamples to the (MIP). We show that if $kG \cong kH$ and $G$ is a $p$-obelisk, then so is $H$, cf. Theorem~\ref{theo_IsOb}.

%
%
%

\begin{thx}
The research in this paper has been conducted at various institutions with different sponsors to whom we wish to express our gratitude. We thank Eric Jespers from the Vrije Universiteit Brussel, Christopher Voll from the Universit\"at Bielefeld, and Bernd Sturmfels from the Max-Planck-Institute for Mathematics in the Sciences.
We, moreover, wish to thank CIRM-FBK Trento for the financial support and hospitality during our Research in Pairs visit in February 2019.
\end{thx}

\section{Preliminaries and notation}\label{sec:Prelim}

Throughout the article, the following assumptions will hold. Let $p$ be a prime number. We denote by $G$ a $p$-group and by $k$ the field of $p$ elements. The (modular) group algebra of $G$ over $k$ is denoted by $kG$. Note that this provides the most general situation for the (MIP): indeed, for any field extension $F/k$, an isomorphism $kG \cong kH$ induces, via extension of scalars, an isomorphism $FG \cong FH$.

\subsection{Group theoretical notions}\label{sec:gp not}
We denote the order of an element $g \in G$ by $|g|$ and the order of $G$ by $|G|$. The index of a subgroup $X$ in $G$ is denoted by $|G:X|$.
The lower central series of $G$ is denoted by $(\ls{i}{G})_{i\geq 1}$ and its upper central series by $(\ZG_i(G))_{i\geq 1}$. 
We write $\ZG(G)$ instead instead of $\ZG_1(G)$ for the centre of $G$. We write, moreover, $\Phi(G)$ for the Frattini subgroup of $G$. We define
$$\mu_p(G)=\gen{x \in G \mid x^p=1} \ \textup{ and } \ G^p=\gen{x^p \mid x\in G}.$$   
In this paper, conjugation is considered to be a \emph{right} action and, consequently, commutators are defined and grouped in the following way. For $x,y,z \in G$, we set 
$$[x,y] = x^{-1}y^{-1}xy \ \textup{ and } \ [x,y,z] = [[x,y],z].$$
For the convenience of the reader, we include the well-known commutator formulas; see for example \cite[III, 1.2 Hilfssatz]{Hup67}. 

\begin{lemma}\label{lemma:comm-formulas} 
Let $X$ be a group. For elements $a,b,c \in X$, the following hold:
\begin{align*}
[a,bc]  = [a,c][a,b]^c \ \textup{ and } \
[ab,c] = [a,c]^b[b,c].
\end{align*} 
\end{lemma}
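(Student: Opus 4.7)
The plan is essentially direct verification: both identities are expressible as equalities between explicit words in $a$, $b$, $c$ and their inverses, so I would just expand everything using the definitions $[x,y] = x^{-1}y^{-1}xy$ and $x^y = y^{-1}xy$, and check that the two sides reduce to the same reduced word.

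Concretely, for the first identity I would start from the right-hand side and compute
\[
[a,c][a,b]^c = (a^{-1}c^{-1}ac)\,(c^{-1}a^{-1}b^{-1}ab\,c),
\]
then cancel the adjacent $c \cdot c^{-1}$ and the resulting $a \cdot a^{-1}$ to obtain $a^{-1}c^{-1}b^{-1}abc$. On the other hand, expanding the left-hand side directly gives
\[
[a,bc] = a^{-1}(bc)^{-1}a(bc) = a^{-1}c^{-1}b^{-1}abc,
\]
so the two agree.

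For the second identity, I would similarly expand
\[
[a,c]^b[b,c] = (b^{-1}a^{-1}c^{-1}ac\,b)\,(b^{-1}c^{-1}bc),
\]
cancel $b \cdot b^{-1}$ and then $c \cdot c^{-1}$ to reach $b^{-1}a^{-1}c^{-1}abc$, which also equals $(ab)^{-1}c^{-1}(ab)c = [ab,c]$.

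There is no real obstacle here; the only thing to be careful about is the grouping convention for commutators (left-normed, with the chosen sign convention $[x,y]=x^{-1}y^{-1}xy$), so that the bracket $[a,b]^c$ is interpreted as $c^{-1}[a,b]c$ rather than the other way around. Given this, each identity reduces to a one-line cancellation, which is why the authors refer the reader to Huppert for the standard statement.
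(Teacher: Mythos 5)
Your computations are correct: both identities check out by direct expansion with the convention $[x,y]=x^{-1}y^{-1}xy$ and $x^y=y^{-1}xy$, and this is exactly the standard verification behind the citation to Huppert, since the paper states the lemma without proof. Nothing is missing.
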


\noindent
Given the special role that the subgroup $\ZG(G)\cap\gamma_2(G)$ will play for us, we associate to it the non-standard notation $\Gamma(G)=\ZG(G)\cap \gamma_2(G)$.

When considering explicit examples, we will refer to the SmallGroups library in GAP \cite{GAP, SmallGroupLibrary} and will denote 
by $\SG(n,m)$ the $m$-th group of order $n$ in the catalog. 

\subsection{Algebra notions}\label{sec:alg not}

Until the end of Section \ref{sec:alg not}, let $\A$ be a $k$-algebra with fixed $k$-basis $\cor{B}$. An element $b\in\cor{B}$ is said to be \emph{in the support of} an element $a\in\A$ if the image of $a$ under the natural projection $\A\rightarrow kb$ is non-zero.

The algebra $\A$ has a natural Lie algebra structure via defining, for all $a,b\in\A$, Lie brackets as $[a,b]=ab-ba$. Given any two subsets $X,Y$ of $\A$, denote by $[X,Y]$ the set of all elements $[x,y]$ with $x\in X$ and $y\in Y$.  
If $\A$ is a local algebra, we denote, moreover, by $V(\A)$ the elements of $\A$ which map to the identity modulo the radical of $\A$. In case $\A$ equals $kG$ or one of its quotients, $V(\A)$ is usually called the \emph{group of normalized units} of $\A$. 

For each element $g \in G$, write now $\bar{g} = g-1 \in kG$ and, for each subset $X$ of $G$, write $\overline{X}$ for the set $\graffe{\bar{x} \mid x\in X}$. Then $kG$ is a local algebra whose radical coincides with the \emph{augmentation ideal}
\[I(kG)= V(kG)-1 = \left\{\sum_{g \in G} \lambda_g g \mid \lambda_g \in k, \ \sum_{g \in G}\lambda_g = 0 \right\},  \]
which is easily seen to be generated, as a $kG$-ideal, by the elements $\bar{g}$ as $g$ varies in $G$.
The augmentation ideal is a nilpotent ideal of $kG$ and hence, for every element $x \in I(kG)$ different from $0$, there exists a minimal number $w$ such that $x \in I(kG)^w\setminus I(kG)^{w+1}$. Such minimal $w$ is called the \emph{weight} of $x$ in $kG$, written 
$$\wt(x)=w.$$
We define, additionally, the \emph{weight} of an element $g \in G$ to be the weight of $\bar{g}$ in $kG$, i.e.\ $\wt(g)=\wt(\bar{g})$. 

The following lemma collects some straightforward equalities holding in $I(kG)$. 

\begin{lemma}\label{lemma:BasicJenningBasisFormulas}
Let $g,h \in G$. Then the following hold:
\begin{align*}
\overline{gh} &= \bar{g} + \bar{h} + \bar{g}\bar{h} , \\
\bar{h}\bar{g} &= \bar{g}\bar{h} + (1 + \bar{g} + \bar{h} + \bar{g}\bar{h})\overline{[h,g]}.
\end{align*}
\end{lemma}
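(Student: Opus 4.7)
The plan is to prove both identities by direct expansion using the definition $\bar{x} = x - 1$ for $x \in G$, treating the underlying group elements as a $k$-basis of $kG$.

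For the first identity, I would simply expand the right-hand side:
\[
\bar{g} + \bar{h} + \bar{g}\bar{h} = (g-1) + (h-1) + (g-1)(h-1) = gh - 1 = \overline{gh}.
\]

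For the second identity, the key observation is that $1 + \bar{g} = g$ and $1 + \bar{h} = h$, so
\[
1 + \bar{g} + \bar{h} + \bar{g}\bar{h} = (1+\bar{g})(1+\bar{h}) = gh.
\]
Using $[h,g] = h^{-1}g^{-1}hg$, one obtains
\[
(1 + \bar{g} + \bar{h} + \bar{g}\bar{h})\,\overline{[h,g]} = gh\bigl([h,g] - 1\bigr) = gh \cdot h^{-1}g^{-1}hg - gh = hg - gh.
\]
It therefore suffices to verify that $\bar{h}\bar{g} - \bar{g}\bar{h} = hg - gh$, which is immediate from
\[
\bar{h}\bar{g} = hg - h - g + 1 \qquad \text{and} \qquad \bar{g}\bar{h} = gh - g - h + 1.
\]

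There is no real obstacle here: the lemma is purely computational, amounting to unwinding the notation $\bar{x} = x-1$ and the definition of the group commutator $[h,g] = h^{-1}g^{-1}hg$. The only small bit of packaging is recognizing $1 + \bar{g} + \bar{h} + \bar{g}\bar{h}$ as the product $gh$, after which both formulas drop out from one-line calculations.
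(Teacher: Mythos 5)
Your computation is correct: both identities follow exactly as you show from $\bar{x}=x-1$, the factorization $1+\bar g+\bar h+\bar g\bar h=gh$, and the paper's convention $[h,g]=h^{-1}g^{-1}hg$. The paper states this lemma without proof as a collection of straightforward equalities, and your direct expansion is precisely the intended argument.
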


\noindent
We conclude this section by observing that
\[kGI(k\gamma_2(G)) = \gen{x \cdot y \mid x \in kG,\ y \in I(k\gamma_2(G))}_{kG},\]
the ideal of $kG$ that is generated by the elements of $I(k\gamma_2(G))$, 
is minimal among the ideals $J$ of $kG$ such that the quotient $kG/J$ is commutative \cite[Lemma 3.5]{San84}.

\subsection{Jennings theory}\label{sec:Ideals}
Many attempts of tackling the (MIP) rely, in one way or the other, on the theory of Jennings \cite{Jen41} providing a $k$-basis of $I(kG)$  that is well-suited for calculations in a way that we now explain. For more detail, we refer the reader to \cite{DdSMS99, HS06}.

The \emph{dimension subgroups} of $G$ are defined, for each positive integer $n$, by 
\[
D_n(G) = G \cap (1+I(kG)^n)
\]
and filter the elements of (the group base) $G$ according to their weights, as defined in Section \ref{sec:alg not}. 
A first characterization of the dimension subgroups was provided by Jennings himself who showed that the series of dimension subgroups coincides with the Brauer-Jennings-Zassenhaus series, i.e.\ that 
\[
D_n(G)=\begin{cases}
G & \textup{ if } n=1, \\
[G,D_{n-1}(G)]D_{\ceil{\frac{n}{p}}}(G)^p & \textup{ if } n\geq 2.
\end{cases}
\]
Another celebrated characterization of the dimension subgroups of $G$ is given in terms of the members of the lower central series and reads
\[D_n(G) = \prod_{ip^j \geq n} \gamma_i(G)^{p^j} = D_{\ceil{\frac{n}{p}}}(G)^p \gamma_n(G); \]
cf.\  \cite[Theorem 11.2, Proposition 11.12]{DdSMS99}.
From the last characterization, it follows that every quotient $D_n(G)/D_{n+1}(G)$ is elementary abelian and that, for each choice of $m$ and $n$, one has 
\begin{equation}\label{eq:prop-dim}
[D_m(G),D_n(G)]\subseteq D_{m+n}(G) \textup{ and } D_n(G)^p\subseteq D_{np}(G);
\end{equation}
see for instance \cite[Section 11.2]{DdSMS99}.  In particular, the graded $k$-module
\[
\Jen(G)=\bigoplus_{n\geq 1}D_n(G)/D_{n+1}(G)
\]
is naturally a restricted $k$-Lie algebra \cite[Chapter 12]{DdSMS99}.

For each positive integer $n$, it is not difficult to see, via the obvious maps, that $D_n(G)/D_{n+1}(G)$ embeds as a $k$-subspace of $I(kG)^n/I(kG)^{n+1}$.
We derive that, if $g_1,\ldots,g_m \in D_n(G)$ are such that their images in $D_n(G)/D_{n+1}(G)$ form a basis over $k$, then their images $\overline{g_1},\ldots,\overline{g_m}\in I(kG)^{n}$ are linearly independent over $k$ when viewed in $I(kG)^n/I(kG)^{n+1}$. Additionally, the following hold
\[
\wt(\bar{g})=w \Longleftrightarrow \wt(g)=w \Longleftrightarrow g \in D_w(G)\setminus D_{w+1}(G)
\]
and, as a consequence of \eqref{eq:prop-dim}, for each $x,y\in G$, one also has 
\[
\wt([x,y])\geq \wt(x)+\wt(y) \textup{ and } \wt(x^p)\geq p\wt(x).
\]
We write $d_n=\dim_k D_n(G)/D_{n+1}(G)$ and let  $t$ be minimal with the property that $D_{t+1}(G) = 1$. Set $d_0=0$ and $m=d_1+\ldots+ d_{t}=\log_p|G|$. A \emph{Jennings tuple} is a tuple 
$$(g_1, \ldots, g_{d_1} \mid g_{d_1+1}, \ldots, g_{d_1+d_2}\mid \ldots \mid g_{m-d_{t}+1}, \ldots, g_m) $$
of elements of $G$ with the property that for all $n \geq 0$
\[
g_{d_0+\ldots+d_n+1}D_{n+2}(G),\ldots, g_{d_0+\ldots+d_{n+1}}D_{n+2}(G) \textup{ is a basis of } D_{n+1}(G)/D_{n+2}(G).
\]
It follows that any element $g$ of $G$ can be written uniquely as a product 
$$g=\prod_{i=1}^m g_i^{\alpha_i} \textup{ with }
0 \leq \alpha_1,\ldots,\alpha_m \leq p-1.
$$ 
If $(g_1,\ldots,g_m)$ is a Jennings tuple of $G$, then the set 
\[ \left\{ \prod_{i=1}^m \overline{g_i}^{\alpha_i} \mid 0\leq \alpha_1, \ldots, \alpha_m \leq p-1 \right\} \]
forms a $k$-basis of $I(kG)$ which we will refer to as the \emph{Jennings basis on} $(g_1,\ldots,g_m)$. A \emph{Jennings basis} of $I(kG)$ is a $k$-basis of $I(kG)$ that can be defined on a Jennings tuple. Observe that any Jennings basis of $I(kG)$ can be extended to a $k$-basis of $kG$ by adding the element $1$. The advantage of working with a Jennings basis is that calculations can be translated directly from the relations of the group elements. 

Let now $g\in G$. It is possible that multiple powers of $G$ appear in a Jennings tuple of $G$. For instance, if $G$ is cyclic and $g\notin \Phi(G)$, then $(g\mid g^p\mid \ldots \mid g^{\exp(G)/p})$ is a Jennings tuple of $G$. For a Jennings tuple $(g_1,\ldots,g_m)$, 
we say that $\overline{g}$ \emph{is a factor of} the element 
$$y = \prod_{i=1}^m \overline{g_i}^{\alpha_i}, \textup{ with } 0 \leq \alpha_1,\ldots,\alpha_m \leq p-1,$$
if there exist non-negative integers $i$ and $j$ such that $g^{p^j} = g_i$ and $\alpha_i \neq 0$. 

Another convenient feature of Jennings bases is that, in some cases, they can be constructed to ``remember'' the group structure. The following fact, which readily follows from the second formula in Lemma~\ref{lemma:BasicJenningBasisFormulas}, will be useful to us. Let $(c_1,...,c_m)$ be a Jennings tuple of $\gamma_2(G)$ such that the elements $c_1,\ldots,c_m$ are also elements of a Jennings tuple ${\bf g}$ of $G$. Then, with respect to the Jennings basis on ${\bf g}$, an element $x \in I(kG)$ lies in the ideal $kGI(k\gamma_2(G))$ if and only if each element in the support of $x$ contains a factor from $\{\overline{c_1},\ldots,\overline{c_m} \}$. 

We conclude the current section by defining a class of ideals of $kG$ that are strictly connected to the dimension subgroups of $G$. 
We define the \emph{Zassenhaus ideals} of $kG$ via their characterization, given by Passi and Segal in \cite{PS72}: for each positive integer $n$, the \emph{$n$-th Zassenhaus ideal of $kG$} is
\[H_n(kG) = \overline{D_n(G)} + I(kG)^{n+1}. \]
The Zassenhaus ideals are canonical ideals of $kG$, i.e.\ each
$H_n(kG)$ can be defined independently of the group base of $kG$, cf.\ \cite[Section 1.3]{HS06}. 
Moreover, thanks to their definition, Zassenhaus ideals allow us to isolate, within quotients of the form $I(kG)^n/I(kG)^{n+1}$, specific contributions of the chosen Jennings basis.  
More precisely, if $U$ is the subspace of $kG$ that is spanned by all elements of a Jennings basis of $I(kG)$ not lying in $\overline{G}=G-1$, then, for each positive integer $n$, one has
\[I(kG)^n/I(kG)^{n+1} = H_n(kG)/I(kG)^{n+1} \oplus (U \cap I(kG)^n + I(kG)^{n+1})/I(kG)^{n+1}. \]
For more on this, see for instance \cite[Section 2.4]{HS06}.


\subsection{Group theoretical invariants}\label{sec:invariants}
One goal in the study of the (MIP) is to determine properties of $G$, the so-called \emph{group theoretical invariants}, that can be derived from the structure of $kG$ as an algebra. More precisely,  if $G$ and $H$ are $p$-groups satisfying $kG \cong kH$ and $\cor{P}$ is a group theoretical invariant, then both $G$ and $H$ will satisfy $\cor{P}$. The order of $G$ is an obvious invariant, being the same as the $k$-dimension of $kG$. We will usually assume this invariant implicitly. Other invariants are collected in the following list. Many of them are classical, have been known for decades, and are included in \cite[Section 6]{San84}. We only provide references for the ones not included in Sandling's  survey.

\begin{enumerate}[label=$(\arabic*)$]
\item The isomorphism type of the quotient $G/\gamma_2(G)$.
\item The isomorphism type of $G/\Phi(G)$ and hence the minimal number of generators of $G$.
\item The isomorphism type of the quotient $D_n(G)/D_{n+1}(G)$ for any positive integer $n$.
\item The isomorphism type of the quotient $D_n(\gamma_2(G))/D_{n+1}(\gamma_2(G))$ for any positive integer $n$. In particular, if $\gamma_2(G)$ is abelian, then the isomorphism type of $\gamma_2(G)$ is an invariant.
\item The isomorphism type of $\Gamma(G) = \gamma_2(G) \cap \ZG(G)$.
\item The isomorphism type of the quotient $G/\gamma_2(G)^p\gamma_3(G)$ -- the \emph{Sandling quotient}.
\item The isomorphism type of the restricted $k$-Lie algebra $\Jen(G)$ of $G$. 
\item If the nilpotency class of $G$ equals $2$, then the nilpotency class is an invariant \cite[Theorem 2]{BK07}.
\item If $G$ is $2$-generated, then the isomorphism type of $G/\gamma_2(G)^p\gamma_4(G)$ is an invariant \cite{MM20}. 
\item If $G/C_G(\gamma_2(G)/\Phi(\gamma_2(G))$ is cyclic, then the isomorphism type of $C_G(\gamma_2(G)/\Phi(\gamma_2(G))$ is an invariant \cite[Corollary 7]{Bag99}.
\end{enumerate}

\noindent
We remark that not all authors write about group theoretic invariants: words like  \emph{determined} often serve the same purpose. 
The list of invariants provided above is not complete, but only contains those we will explicitly use in this article. Sometimes, in examples, we will speak of groups sharing \emph{all known group theoretical invariants}. By this we mean the list of invariants included in \cite{MM20}, which, to the best of our knowledge, covers all invariants known to this day.

\subsection{The small group algebra}\label{sec:preliminaries-small}

In this section, we define the small group algebra of $kG$ and briefly discuss its history. We present some classical related constructions and list a number of known properties. This section mostly follows  \cite[Section 2.3]{HS06} and \cite{San89}, though the notation might differ. 

Until the end of the present section, \underline{assume} that $\gamma_2(G)^p\gamma_4(G)=1$ and write $G^{\ab}=G/\gamma_2(G)$ for the abelianization of $G$. 
The \emph{small group algebra} of $kG$
is $kG/I(kG)I(k\gamma_2(G))$ and its group of normalized units is 
$S=S_G=V(kG/I(kG)I(k\gamma_2(G)))$. 
We then have a natural short exact sequence of groups
\begin{equation}\label{eq:SES}
1 \longrightarrow \ls{2}{G}\longrightarrow S\overset{\kappa}{\longrightarrow} V(kG^{\mathrm{ab}})\longrightarrow 1
\end{equation}
with the property that $\kappa^{-1}(G^{\ab})$ is a copy of $G$. Without loss of generality we identify $G$ and $\kappa^{-1}(G^{\ab})$. The group $V(kG^{\mathrm{ab}})$ being abelian, we have that $G$ is a normal subgroup of $S$. 

The history of the small group algebra, or small group ring in this case, dates back to Whitcomb's proof of the fact that integral group rings determine finite metabelian groups \cite[Theorem 2.5]{Pas77}. The small group algebra gains much popularity thanks to \cite{San89} and the determination of Sandling's invariant; see also Section \ref{sec:invariants}. The small group algebra also plays a fundamental role in the proof, by Salim and Sandling, that the (MIP) has a positive answer for groups of order $p^5$ \cite{SS96MC, SS95, SS96p5}; for more detail see also \cite{Sal93}. In the context of central Frattini extensions and with a relative definition of small group algebra, Hertweck and Soriano classify the central Frattini extensions giving rise to isomorphic small group algebras \cite{HS07}. 


We conclude the current section by giving the construction of a complement of $G$ in $S$ and deriving related properties. For this, fix $x_1,\ldots,x_n\in G$ (a minimal set of) generators of $G$ with the property that 
$$G^{\mathrm{ab}}=\bigoplus_{i=1}^n\gen{x_i\gamma_2(G)}.$$ 
Denote ${\bf x}=(x_1,\ldots, x_n)$ and, for each $\delta=(\delta_1,\ldots,\delta_n)\in\Z^n$, write ${\bf x}^{\delta}=\overline{x_1}^{\delta_1}\cdots\overline{x_n}^{\delta_n}$. Set $|x_i\gamma_2(G)|=p^{\lambda_i}$. Define, additionally, $D(G)$ as 
\begin{equation}\label{eq:defD(G)}
D(G)=\left\{\delta=(\delta_1,\ldots, \delta_n) \mid 0\leq \delta_i<p^{\lambda_i}, {\bf\delta}\not\equiv 0\bmod p, \sum_{i=1}^n\delta_i\geq 2\right\}.
\end{equation}
We then have that the subgroup $A=A(G|{\bf x})$ of $S$ that is generated by the elements
\begin{equation}\label{eq:a}
a=1+{\bf x^\delta}=1+\overline{x_1}^{\delta_1}\overline{x_2}^{\delta_2}\cdots\overline{x_n}^{\delta_n} \textup{ with } \delta\in D(G)
\end{equation}
is a complement of $G$ in $S$, equivalently
\[S= G\rtimes A;\]
see for example \cite[Section 2.3]{HS06}.
Moreover, for $x\in G$ and $a$ of the above form, the commutator $[x,a]$ is equal to 
\[
[x,a]=[\ldots[x,\underbrace{ x_1],\ldots x_1}_{\delta_1}],\underbrace{x_2],\ldots, x_2}_{\delta_2}],\ldots x_{n-1}],\underbrace{x_n],\ldots, x_n}_{\delta_n}].
\]
Observe that $[G,A]$ is contained in $\gamma_3(G)$ and, since $G$ has class at most $3$, all elements $a$ of the form \eqref{eq:a} with $\sum_{i=1}^n\delta_i\geq 3$ centralize $G$ and are thus central in $S$. Moreover, $A$ is abelian and $[A,\gamma_2(G)]=1$. 

The following proposition collects some results, already presented by Salim in his PhD thesis \cite{Sal93}. Point (1) is a special instance of \cite[Thm.\ 3.5]{Sal93}, point (4) follows from \cite[Thm.\ 3.7]{Sal93}, while point (3) is included in \cite[Cor.\ 3.11]{Sal93}. 

\begin{proposition}\label{prop:propsS-gp}
The following hold:
\begin{enumerate}[label=$(\arabic*)$]
\item for each $i\in\Z_{\geq 2}$, one has $\gamma_i(S)=\gamma_i(G)$,
\item $\Gamma(S)=\Gamma(G)$,
\item $[A^p,S]=\graffe{1}$,
\item $G\cap\ZG(S)=\ZG(G)$.
\end{enumerate}
\end{proposition}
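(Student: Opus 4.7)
The plan is to handle (1)--(4) in order, exploiting the three structural facts recalled in the paragraph preceding the statement: $S = G \rtimes A$ with $A$ abelian, $[A,\gamma_2(G)]=1$, and $[G,A]\subseteq \gamma_3(G)$. Together with the standing assumption $\gamma_2(G)^p\gamma_4(G)=1$, all four claims reduce to short commutator calculations inside $S$.

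For (1), the inclusion $\gamma_i(G)\subseteq \gamma_i(S)$ is automatic since $G\leq S$. For the reverse I would expand $\gamma_2(S)=[GA,GA]$ via Lemma~\ref{lemma:comm-formulas} together with $[A,A]=1$ and $[G,A]\subseteq \gamma_3(G)\subseteq \gamma_2(G)$, yielding $\gamma_2(S)\subseteq \gamma_2(G)$. Iterating, $\gamma_3(S)=[\gamma_2(G),S]\subseteq [\gamma_2(G),G]\cdot [\gamma_2(G),A]=\gamma_3(G)$ since $[\gamma_2(G),A]=1$; and $\gamma_4(S)=[\gamma_3(G),S]=1=\gamma_4(G)$ because $\gamma_3(G)$ is centralized both by $G$ (as $\gamma_4(G)=1$) and by $A$ (as $\gamma_3(G)\subseteq \gamma_2(G)$). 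Part (2) is then immediate: by (1), $\Gamma(S)=\ZG(S)\cap \gamma_2(G)$, and any $z\in \gamma_2(G)\cap \ZG(G)$ centralizes $G$ by assumption and centralizes $A$ via $[A,\gamma_2(G)]=1$, so $z\in \ZG(S)$; the reverse containment is obvious.

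For (4), if $g\in \ZG(G)$ and $a=1+{\bf x}^\delta$ is a generator of $A$ with $\delta\in D(G)$, then the explicit nested-commutator formula for $[g,a]$ displayed in the preamble vanishes already at its innermost step because $[g,x_i]=1$ for every $i$ while $\sum_i\delta_i\geq 2$ by the definition of $D(G)$. Lemma~\ref{lemma:comm-formulas} extends this multiplicatively to arbitrary products of generators, giving $[g,A]=1$ and hence $g\in \ZG(S)$; the opposite inclusion is trivial.

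The substantive step is (3). Fix a generator $a=1+{\bf x}^\delta$ of $A$ and an element $g\in G$. Since $[g,a]\in \gamma_3(G)$ is central in $G$ (as $\gamma_4(G)=1$) and is centralized by $A$ (as $[A,\gamma_2(G)]=1$), it is central in $S$. A short induction based on Lemma~\ref{lemma:comm-formulas} then gives $[g,a^n]=[g,a]^n$ for every $n\geq 1$; in particular $[g,a^p]=[g,a]^p=1$, since $[g,a]\in \gamma_2(G)$ and $\gamma_2(G)^p=1$. Passing from single generators to arbitrary elements of $A^p$ uses only that $A$ is abelian together with the commutator formulas. This $p$-th-power reduction is the single nontrivial point and is exactly where the hypothesis $\gamma_2(G)^p=1$ is decisive; it is the only step not consisting of pure bookkeeping.
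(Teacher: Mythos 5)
Your argument is correct, and for the most part it runs along the same lines as the paper's proof: all four statements are reduced to commutator bookkeeping in $S=G\rtimes A$ using $[G,A]\subseteq\gamma_3(G)$ and $[A,A\gamma_2(G)]=1$. The one genuine divergence is at the step $\gamma_3(S)=\gamma_3(G)$: the paper first proves $\Gamma(S)=\Gamma(G)$ and then deduces the equality from a commutative diagram of surjections $S/\gamma_2(S)A\otimes\gamma_2(S)/\Gamma(S)\twoheadrightarrow\gamma_3(S)$ and $G/\gamma_2(G)\otimes\gamma_2(G)/\Gamma(G)\twoheadrightarrow\gamma_3(G)$, whereas you compute directly on generators of $[\gamma_2(G),S]$, using $[A,\gamma_2(G)]=1$ to kill both the $A$-component and the conjugation. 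Your route is more elementary and decouples (1) from (2) (the paper needs (2) before it can finish (1)); just note that the inclusion $[\gamma_2(G),S]\subseteq[\gamma_2(G),G]\,[\gamma_2(G),A]$ is not a formal identity but is justified exactly by the expansion $[c,ga]=[c,a][c,g]^a=[c,g]$, so you should say this rather than distribute the commutator formally. Similarly, for (3) the paper merely records it as a direct consequence of $\gamma_2(G)$ being elementary abelian, while you spell out the underlying argument ($[g,a]$ central in $S$, hence $[g,a^p]=[g,a]^p=1$), which is exactly the intended justification; and your treatment of (4) via the nested commutator formula for all generators of $A$ is the same computation the paper performs for the generators $1+\overline{x_i}\,\overline{x_j}$, the higher-degree generators being central anyway.
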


\begin{proof}
We prove (1) and (2) together. 
Let $x,y\in S$ and let $g,h\in G$ and $a,b\in A$ be such that $x=ga$ and $y=hb$. It follows from Lemma \ref{lemma:comm-formulas} and the fact that $[G,\gamma_3(G)]=1=[A,A\gamma_2(G)]$ that
\begin{align*}
[x,y] &= [ga,hb]=[ga,b][ga,h]^b\\
         &= [g,b]^a[a,b]([g,h]^a[a,h])^b\\
         &= [g,b][g,h][a,h] \in \gamma_3(G)\gamma_2(G)\gamma_3(G)=\gamma_2(G).
\end{align*}
Since $\gamma_2(G)\subseteq \gamma_2(S)$, we derive that in fact $\gamma_2(S)=\gamma_2(G)$. 

We now show that $\Gamma(S)=\Gamma(G)$. Since $\ZG(S)\cap G\subseteq \ZG(G)$, we have that $\ZG(S)\cap \gamma_2(S)\subseteq \ZG(G)\cap \gamma_2(G)$. To show the opposite inclusion, it now suffices to show that $\ZG(G)\cap \gamma_2(G)\subseteq \ZG(S)$. To this end, let $x=ga\in S$ and let $z\in\Gamma(G)$. Then 
$[z,x]=[z,ga]=[z,a][z,g]^a=1$,
because $[z,a]\in [\gamma_2(G),A]=1$ and $[z,g]\in[\ZG(G),G]=1$, and therefore $z\in\ZG(S)$. 

To show that $\gamma_3(S)=\gamma_3(G)$, observe that $\gamma_2(S)A=\gamma_2(G)A\subseteq \Cyc_S(\gamma_2(G))=\Cyc_S(\gamma_2(S))$. We then get the following commutative diagram of natural maps
\[
\xymatrix{
S/\gamma_2(S)A\otimes \gamma_2(S)/\Gamma(S) \ar[d]_{\cong}\ar@{>>}[r] & \gamma_3(S) \\
G/\gamma_2(G)\otimes \gamma_2(G)/\Gamma(G)\ar@{>>}[r] & \gamma_3(G) \ar[u]_{\subseteq}
}
\]
and so the inclusion map is an equality.

We now show that $\gamma_4(S)=1$. Since $A$ is abelian and $S=G\rtimes A$, we have that $[A,S]\subseteq \gamma_3(S)=\gamma_3(G)$. It follows that
$\gamma_3(S)\subseteq \gamma_2(G)\cap \ZG(G) = \gamma_2(S)\cap \ZG(S)$
and, in particular, $\gamma_3(S)$ is central in $S$, equivalently $\gamma_4(S)=[S,\gamma_3(S)]=1$.

(3) This is a direct consequence of $\gamma_2(G)$ being elementary abelian. 

(4) We clearly have that $\ZG(S)\cap G\subseteq \ZG(G)$ and so we prove the other inclusion. To this end, let $z\in\ZG(G)$ and $s=ga\in S$. It follows from Lemma~\ref{lemma:comm-formulas} that 
$[z,ga]=[z,a][z,g]^a=[z,a]$.
If we now take $a=1+\overline{x_i}\,\overline{x_j}$ as in \eqref{eq:a}, then $[z,a]=[[z,x_i],x_j]=1$ and we are done.
\end{proof}

\begin{lemma}\label{lemma:Z2}
The following hold: 
\[ \ZG_2(S)=\ZG_2(G)A, \quad [\ZG_2(G),A]=1, \quad \ZG_2(G)=\ZG_2(S)\cap G.\]
Moreover, $\ZG_2(S)$ is equal to the direct product of $\ZG_2(G)$ with $A$.
\end{lemma}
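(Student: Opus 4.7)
The plan is to prove the three stated equalities in the order they appear and then deduce the direct-product statement.

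First I would establish $[\ZG_2(G), A] = \graffe{1}$. Every generator $a = 1 + {\bf x}^\delta$ of $A$ satisfies the iterated commutator formula displayed just before Proposition~\ref{prop:propsS-gp}, and the total number of entries from $x_1, \ldots, x_n$ appearing in this commutator equals $\sum_i \delta_i \geq 2$. For $z \in \ZG_2(G)$, the innermost bracket $[z, x_i]$ lies in $\ZG(G)$, and so every subsequent bracket with an $x_j$ is trivial. Hence all generators of $A$ centralize $\ZG_2(G)$.

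Next I would prove the containments $\ZG_2(G) \subseteq \ZG_2(S)$ and $A \subseteq \ZG_2(S)$, whose combination yields $\ZG_2(G) A \subseteq \ZG_2(S)$. For $z \in \ZG_2(G)$ and arbitrary $s = gb \in S$ with $g \in G$ and $b \in A$, Lemma~\ref{lemma:comm-formulas} gives
\[
[z, s] = [z, b][z, g]^b = [z, g]^b,
\]
where the first factor is trivial by the previous step. Since Proposition~\ref{prop:propsS-gp}(4) implies $\ZG(G) = \ZG(S) \cap G \subseteq \ZG(S)$, the element $[z, g]$ is central in $S$, and so is its conjugate. Similarly, for $a \in A$ and $s = gb \in S$, the abelianness of $A$ yields $[a, s] = [a, g]^b$, and $[a, g] \in [A, G] \subseteq \gamma_3(G)$; since $G$ has class at most $3$, we have $\gamma_3(G) \subseteq \ZG(G) \subseteq \ZG(S)$, so $[a, s] \in \ZG(S)$, as required.

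I would then turn to the reverse inclusion $\ZG_2(S) \subseteq \ZG_2(G) A$. Writing $x = ga \in \ZG_2(S)$ with $g \in G$ and $a \in A$, the previous paragraph gives $a \in \ZG_2(S)$, hence $g = x a^{-1} \in \ZG_2(S) \cap G$. It then suffices to show $\ZG_2(S) \cap G \subseteq \ZG_2(G)$: for such a $g$, the bracket $[g, G]$ lies in $\ZG(S) \cap G$, which equals $\ZG(G)$ by Proposition~\ref{prop:propsS-gp}(4), so $g \in \ZG_2(G)$. Combining with the containment from Step~2 gives $\ZG_2(S) \cap G = \ZG_2(G)$.

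For the direct-product statement, $\ZG_2(G) \cap A \subseteq G \cap A = \graffe{1}$ since $S = G \rtimes A$, and $[\ZG_2(G), A] = \graffe{1}$ by the first step, so $\ZG_2(S) = \ZG_2(G) A \cong \ZG_2(G) \times A$. I do not anticipate any serious obstacle; the argument reduces to careful bookkeeping with the commutator identities of Lemma~\ref{lemma:comm-formulas}, the explicit formula for $[x, a]$ recalled in Section~\ref{sec:preliminaries-small}, and the information already gathered in Proposition~\ref{prop:propsS-gp}. The only mildly subtle point is the implicit use of the inclusion $\ZG(G) \subseteq \ZG(S)$, which follows from Proposition~\ref{prop:propsS-gp}(4) and is what allows the conjugates $[z, g]^b$ and $[a, g]^b$ to remain central in $S$.
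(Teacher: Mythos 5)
Your proof is correct and takes essentially the same route as the paper's: the same commutator bookkeeping in $S=G\rtimes A$ using Lemma~\ref{lemma:comm-formulas}, the explicit formula for $[x,a]$ with $a$ as in \eqref{eq:a}, and Proposition~\ref{prop:propsS-gp}, establishing the same inclusions in a slightly different order. The only cosmetic difference is that you test membership in $\ZG_2(S)$ directly against $\ZG(S)$, using Proposition~\ref{prop:propsS-gp}(4) to get $\ZG(G)\subseteq\ZG(S)$, whereas the paper runs the identical computations through the characterization of the second centres in terms of $\Gamma(S)=\Gamma(G)$ from Proposition~\ref{prop:propsS-gp}(2); both are valid.
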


\begin{proof}
Note that, for $x=ga$ and $y=hb$ in $S$, the following equality holds $[x,y]=[ga,hb]=[g,b][g,h][a,h]$ and, moreover, that, by Proposition \ref{prop:propsS-gp}(2), we have
\begin{align*}
\ZG_2(G) & = \graffe{z\in G \mid \textup{ for all } g\in G \ : \ [z,g]\in \Gamma(G)},\\
\ZG_2(S) & = \graffe{z\in S \mid \textup{ for all } s\in S \ : \ [z,s]\in \Gamma(S)=\Gamma(G)}.
\end{align*}
We start by showing that $\ZG_2(G)A$ is contained in $\ZG_2(S)$. First, observe that $A$ is contained in $\ZG_2(S)$ because $[S,A]=[G,A]\subseteq \gamma_3(G)\subseteq \Gamma(G)$. Moreover, if we now take $a=1$ and $x=g\in\ZG_2(G)$, it follows from the above formula that 
$[g,hb]=[g,b][g,h]\in \gamma_3(G)\Gamma(G)=\Gamma(G)$.
We now show that $\ZG_2(S)$ is contained in $\ZG_2(G)A$. To this end, assume that $x=ga\in\ZG_2(S)$: we show that $g\in\ZG_2(G)$. Indeed, it follows from Lemma \ref{lemma:comm-formulas} that 
$[g,h]=[g,b]^{-1}[x,y][a,h]^{-1}\in \gamma_3(G)\Gamma(G)=\Gamma(G)$.

We now show that $[\ZG_2(S),A]=1$. For this, observe that 
$[\ZG_2(S),A]=[\ZG_2(G),A]$. Let now $x\in\ZG_2(G)$ and $a=1+\overline{x_i}\,\overline{x_j}\in A$ as in \eqref{eq:a}. Then we have 
$[x,a]=[[x,x_i],x_j]\in[\Gamma(G),G]=1$
and thus the claim is proven.

We conclude by observing that $\ZG_2(S)\cap G=(\ZG_2(G)A)\cap G=(G\cap A)\ZG_2(G)=\ZG_2(G)$.
\end{proof}

\section{Applications of the small group algebra}\label{sec:SmallGroupRing}

In this section we will push techniques of Salim and Sandling further to achieve a positive solution of the (MIP) for some families of groups, yielding in particular new results for orders $p^6$ and $p^7$. We do this with the aid of the small group algebra, as introduced in Section \ref{sec:preliminaries-small}.

Until the end of the present section, \underline{assume} that $\gamma_2(G)^p\gamma_4(G)=1$. It is not difficult to show that, under the additional assumption that $p$ is odd, the subgroup structure of $G$ is as in Figure~\ref{fig:subgroupStructure}. We remark that, in Figure~\ref{fig:subgroupStructure}, the only inclusion requiring $p$ odd is $G^p\subseteq \ZG(G)$. In the statements in which we will require this, we will explicitly assume that $p$ is odd.

Write $G^{\mathrm{ab}}=G/\gamma_2(G)$ for the abelianization of $G$ and $S=V(kG/I(kG)I(k\gamma_2(G)))$ for the normalized units of the small group algebra of $kG$. Let, moreover, ${\bf x}=(x_1,\ldots, x_n)$ be a vector of generating elements of $G$ satisfying $G^{\mathrm{ab}}=\oplus_{i=1}^n\gen{x_i\gamma_2(G)}$ and denote $|x_i\gamma_2(G)|=p^{\lambda_i}$. Let $A=A(G | {\bf x})$ denote the complement of $G$ in $S$ as defined in Section \ref{sec:preliminaries-small}.  Then $A$ satisfies the following:
\[
S=G\rtimes A, \quad [G,A]\subseteq \gamma_3(G), \quad [A,A\gamma_2(G)]=1.
\]
We will show in Section \ref{sec:complement} that any group base of $kG$ can be complemented by $A$ in $S$. Moreover, we will show how generating sets of two group bases can be related via translation by elements of $A$. Section \ref{sec:main-small} collects the main results of Section \ref{sec:SmallGroupRing}. We present implications of our main results in Section \ref{sec:cor-small}, with a particular emphasis on the case of groups of order $p^6$ and $p^7$. Additionally, we give computational evidence aimed at depicting our contribution for the last groups and give a critical analysis of the limits of our methods for order $p^6$.

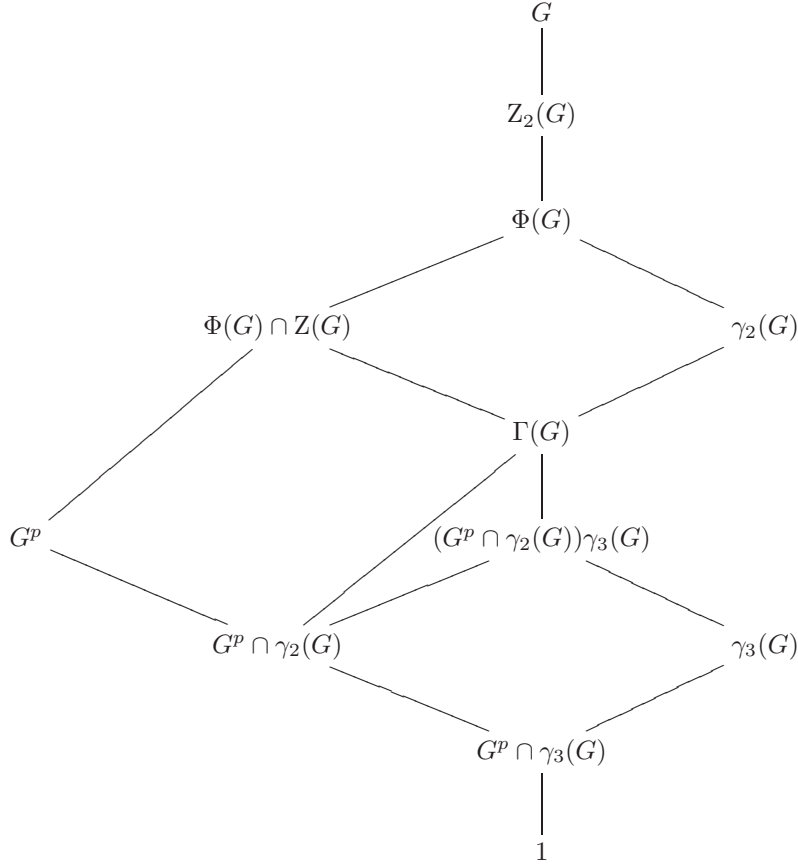
\begin{figure}[H]
\begin{equation*}
\xymatrix{
&& &  G \ar@{-}[d]&& \\
&& &  \ZG_2(G) \ar@{-}[d]&& \\
&& & \ar@{-}[dl] \Phi(G) \ar@{-}[dr]&& \\
&& \ar@{-}[ddll] \Phi(G)\cap\ZG(G) \ar@{-}[dr] & & \ar@{-}[dl] \gamma_2(G) &  \\
& &  & \ar@{-}[ddl] \Gamma(G) \ar@{-}[d] &  \\
G^p \ar@{-}[drr] && & \ar@{-}[dl] (G^p\cap\gamma_2(G))\gamma_3(G) \ar@{-}[dr] & \\
&& \ar@{-}[dr] G^p\cap \gamma_2(G) & &\ar@{-}[dl] \gamma_3(G) \\
&& & G^p\cap\gamma_3(G)\ar@{-}[d] & \\
&& & 1&
}
\end{equation*}
{\caption{Subgroup structure of $G$ in case $\gamma_2(G)^p\gamma_4(G) =1$ and $p$ odd}\label{fig:subgroupStructure}}
\end{figure}

\subsection{Compatible complements}\label{sec:complement}

\noindent
The aim of the current section is to relate different group bases of $kG$ within $S$. We observe that, if $H$ is another group base of $kG$, then $kG^{\ab}=kH^{\ab}$ and so, through the short exact sequence in \eqref{eq:SES}, the group $S$ possesses a normal subgroup that is isomorphic to $H$. Because of this, we can consider every group base of $kG$ to be embedded in $S$ as a normal subgroup. 

\begin{proposition}\label{prop:sameA}
Let $H$ be a group base of $kG$ and consider it embedded in $S$.
Then $S=G\rtimes A=H\rtimes A$.
\end{proposition}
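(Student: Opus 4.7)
Since the setup gives $S = G \rtimes A$, I will focus on establishing $S = H \rtimes A$. As $H$ is embedded in $S$ as a normal subgroup and $|H| = |G| = |S|/|A|$, it will be enough to prove $H \cap A = 1$: the equality $HA = S$ then follows by order counting.

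The first step is to check that $\gamma_2(H) = \gamma_2(G)$ as subgroups of $S$. The inclusion $\gamma_2(H) \leq \gamma_2(S) = \gamma_2(G)$ is immediate from Proposition~\ref{prop:propsS-gp}(1), while equality is forced by orders: since the isomorphism type of $G/\gamma_2(G)$ is a group-theoretic invariant of $kG$, we have $|H^{\ab}| = |G^{\ab}|$ and hence $|\gamma_2(H)| = |\gamma_2(G)|$. As a consequence, $\gamma_2(G) \leq H$, the natural map $H \to V(kG^{\ab}) = S/\gamma_2(G)$ identifies $\overline{H} := H/\gamma_2(G)$ with the abelianization $H^{\ab}$ sitting inside $V(kG^{\ab})$, and because $A \cap \gamma_2(G) = 1$ the map $H \cap A \to \overline{H} \cap \overline{A}$ is injective (where $\overline{A}$ denotes the image of $A$). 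It therefore suffices to show $\overline{H} \cap \overline{A} = 1$ inside $V(kG^{\ab})$.

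Reducing the decomposition $S = G \rtimes A$ modulo $\gamma_2(G)$ and using $[G,A] \subseteq \gamma_3(G) \subseteq \gamma_2(G)$, one obtains a direct product $V(kG^{\ab}) = G^{\ab} \times \overline{A}$. Let $\pi \colon V(kG^{\ab}) \to G^{\ab}$ be the projection with kernel $\overline{A}$; I will show $\pi|_{\overline{H}}$ is surjective, whence it is bijective since $|\overline{H}| = |G^{\ab}|$, yielding $\overline{H} \cap \overline{A} = \ker(\pi|_{\overline{H}}) = 1$. Since the generators $1 + \mathbf{x}^{\delta}$ of $\overline{A}$ all have weight at least $2$, we have $\overline{A} \subseteq 1 + I(kG^{\ab})^2$; consequently both $\overline{H}/\Phi(\overline{H})$ and $G^{\ab}/\Phi(G^{\ab})$ embed into $V(kG^{\ab})/(1 + I(kG^{\ab})^2) \cong I(kG^{\ab})/I(kG^{\ab})^2$, and since all three groups have order $p^n$, these embeddings are isomorphisms. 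I can then pick $y_1,\ldots,y_n \in \overline{H}$ with $y_i \equiv x_i \bmod (1 + I(kG^{\ab})^2)$; the tuple generates $\overline{H}$ by Burnside's basis theorem. Writing $y_i = \pi(y_i) \cdot a_i$ with $a_i \in \overline{A}$ and comparing with $y_i = x_i \cdot u_i$ for $u_i \in 1 + I(kG^{\ab})^2$, one deduces $\pi(y_i) x_i^{-1} = u_i a_i^{-1} \in G^{\ab} \cap (1 + I(kG^{\ab})^2) = \Phi(G^{\ab})$, and a second application of Burnside forces $\pi(\overline{H}) = G^{\ab}$, closing the argument.

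The subtlest point is the very first step: one must be careful in transporting Proposition~\ref{prop:propsS-gp}(1) across group bases, since that statement is phrased for the specific complement $A$ built from $G$. Once $\gamma_2(G) = \gamma_2(H) = \gamma_2(S)$ is in place, the rest of the proof is a clean double application of Burnside's basis theorem inside the Jennings-type filtration of $V(kG^{\ab})$.
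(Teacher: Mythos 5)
Your proof is correct, but at the decisive step it takes a genuinely different route from the paper's. Both arguments reduce the claim in the same way: one first checks $\gamma_2(H)=\gamma_2(G)$ as subgroups of $S$ (your combination of $\gamma_2(H)\subseteq\gamma_2(S)=\gamma_2(G)$ from Proposition~\ref{prop:propsS-gp}(1) with the invariance of $|G^{\ab}|$ is exactly what underlies the paper's terse citation, and is arguably cleaner since it never applies that proposition to $H$), and then uses $A\cap\gamma_2(G)=1$ plus order counting to reduce $H\cap A=1$ to the abelianized statement $\kappa(H)\cap\kappa(A)=1$ in $V(kG^{\ab})$. Where you diverge is in proving that abelianized statement. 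The paper identifies $\kappa(A)$ with $1+U$, where $U$ is the span of the non-group elements of a Jennings basis of $I(kG^{\ab})$, and then invokes the canonicity of the Zassenhaus ideals $H_m(kG^{\ab})$ at \emph{every} level of the filtration to conclude $V(kG^{\ab})=H^{\ab}\times\kappa(A)$. You work only at the Frattini level: from $\kappa(A)\subseteq 1+I(kG^{\ab})^2$ you choose $y_i\in\kappa(H)$ congruent to the $x_i$ modulo $1+I(kG^{\ab})^2$, and two applications of Burnside's basis theorem show that the projection of $\kappa(H)$ onto $G^{\ab}$ along $\kappa(A)$ is surjective, hence bijective by orders. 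Your route is more elementary, needing nothing beyond $I/I^2$ and Burnside; the paper's computation has the independent payoff of describing the complement canonically as $1+U$. One point you leave implicit: the identity $\kappa(H)\cap(1+I(kG^{\ab})^2)=\Phi(\kappa(H))$, which you need both for the claimed embedding of $\kappa(H)/\Phi(\kappa(H))$ into $V(kG^{\ab})/(1+I(kG^{\ab})^2)$ and for the surjectivity used to produce the $y_i$, rests on the fact that $\kappa(H)=H^{\ab}$ is a group base of $kG^{\ab}=kH^{\ab}$, so that this intersection is $D_2(H^{\ab})=\Phi(H^{\ab})$; this is available from the way the paper fixes the embedding of $H$ into $S$, but it deserves an explicit word, since for an arbitrary abelian subgroup of $V(kG^{\ab})$ of the right order the statement would fail.
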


\begin{proof}
Write ${\bf y}=(y_1,\ldots,y_n)=(x_1\gamma_2(G),\ldots, x_n\gamma_2(G))$ and denote $A^{\ab}=A(G^{\ab}|{\bf y})$. 
Let $D(G)$ be as defined in \eqref{eq:defD(G)} and define $\cor{B}  = \graffe{{\bf y^\delta} \mid \delta\in D(G)}$.
It follows from the description in Section \ref{sec:preliminaries-small} that $A^{\ab}$ is generated by $1+\cor{B}$ and, with the notation from \eqref{eq:SES}, that $A^{\ab}=\kappa(A)$. Moreover, $V(kG^{\ab})$ is the direct product of $G^{\ab}$ and $A^{\ab}$. 
Define now $U$  to be the $k$-linear span of $\cor{B}$ in 
$kG^{\mathrm{ab}}$: 
we claim that 
$1+U=A^{\ab}$. 
For this, observe that, as we have defined it, $U$  satisfies the following equality for all positive integers $m$:
\[
I(kG^{\mathrm{ab}})^m/I(kG^{\mathrm{ab}})^{m+1}=H_m(kG^{\mathrm{ab}})/I(kG^{\ab})^{m+1}\oplus ((U\cap I(kG^{\ab})^m)+I(kG^{\ab})^{m+1})/I(kG^{\ab})^{m+1}.
\]
In particular, it holds that 
\[
|U|=|I(kG^{\ab})|/\prod_{m\geq1}|D_m(G^{\ab}):D_{m+1}(G^{\ab})|=|I(kG^{\ab})|/|G|=|A^{\ab}|.
\]
Since $U$ is multiplicatively closed, we get that $1+U$ is a group and thus $\gen{1+\cor{B}}=1+U$.
It follows from the theory of Zassenhaus ideals, cf.\ Section~\ref{sec:Ideals}, that $A^{\ab}=1+U$ and therefore 
$$V(kG^{\ab})=H^{\ab}\times A^{\ab}=\kappa(H)\times \kappa(A).$$
To conclude, assume, for a contradiction, that $A$ intersects $H$ non-trivially. Thanks to Proposition \ref{prop:propsS-gp}(1), the subgroups $\gamma_2(H)$ and $\gamma_2(G)$ coincide and thus $A\cap \gamma_2(H)=1$. It follows then that $H\cap A$ has a non-trivial image in $V(kG^{\ab})$, but this contradicts the fact that $V(kG^{\ab})= H^{\ab}\times \kappa(A)$.
\end{proof}

\noindent
Thanks to the last result, the subgroup $A$ is a mutual complement of both $G$ and $H$. It follows that, if $S$ determines the isomorphism type of $G$, then so does $S/(A\cap\ZG(S))$. Without loss of generality, we \underline{assume}, until the end of Section~\ref{sec:SmallGroupRing}, that $A\cap\ZG(S)=1$. It follows from Proposition \ref{prop:propsS-gp}(3) that $A$ is elementary abelian. 

\begin{lemma}\label{lemma:gensG->H}
Let $H$ be a group base of $kG$ and consider it embedded in $S$. Let $g_1,\ldots,g_n$ be a generating set of $G$. Then there exist $a_1,\ldots,a_n \in A$ such that 
$g_1 a_1,\ldots, g_n a_n$ is a generating set of $H$.  
\end{lemma}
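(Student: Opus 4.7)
The plan is to use the semidirect-product decomposition $S=H\rtimes A$ from Proposition \ref{prop:sameA} to define the elements $a_i$, and then verify via a well-chosen quotient that the resulting $h_i = g_i a_i$ generate $H$. Since the $g_i$ are a generating set of $G$ of size $n$ (equal to the minimal number of generators of $G$, and hence, by invariant (2) of Section \ref{sec:invariants}, to the minimal number of generators of $H$), only the Burnside basis theorem will be needed at the end.

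First, I would define the $a_i$. Using $S=H\rtimes A$, each element $g_i\in G\subseteq S$ can be written uniquely as $g_i=h_ib_i$ with $h_i\in H$ and $b_i\in A$. Setting $a_i=b_i^{-1}\in A$ then gives $g_ia_i=h_i\in H$, so the elements $g_1a_1,\ldots,g_na_n$ do lie in $H$. It remains to show they generate $H$.

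Next I would pass to a quotient of $S$ that is visibly isomorphic to both $G/\gamma_2(G)$ and $H/\gamma_2(H)$. By Proposition \ref{prop:propsS-gp}(1), $\gamma_2(G)=\gamma_2(S)=\gamma_2(H)$, and since $[A,\gamma_2(G)]=1$ the product $N:=\gamma_2(G)A$ is a subgroup; moreover $[N,S]\subseteq[\gamma_2(G),S][A,S]\subseteq\gamma_3(G)\subseteq N$, so $N$ is normal in $S$. Because $G\cap A=H\cap A=1$, we get
\[
G/\gamma_2(G)\ \cong\ S/N\ \cong\ H/\gamma_2(H).
\]
Under the projection $S\to S/N$, the elements $h_i=g_ia_i$ and $g_i$ have the same image, since $a_i\in A\subseteq N$.

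Finally, I would invoke the Burnside basis theorem. By hypothesis the images of $g_1,\ldots,g_n$ generate $G/\gamma_2(G)$, hence the images of $h_1,\ldots,h_n$ generate $S/N\cong H/\gamma_2(H)$. Since $\Phi(H)\supseteq\gamma_2(H)$, they also generate $H/\Phi(H)$, and so $\langle h_1,\ldots,h_n\rangle=H$. There is no real obstacle here; the only conceptual point is recognizing that $\gamma_2(G)A$ is the right common subgroup of $S$ that sees the abelianizations of $G$ and $H$ simultaneously, and this is forced by Proposition \ref{prop:propsS-gp}(1) together with $[A,\gamma_2(G)]=1$.
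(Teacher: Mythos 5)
Your proof is correct, and it reaches the conclusion by a genuinely different verification than the paper, although the setup is identical: like the paper, you use Proposition~\ref{prop:sameA} to write $g_i=h_ib_i$ with $h_i\in H$, $b_i\in A$ and set $a_i=b_i^{-1}$, so everything reduces to showing that the $H$-components $h_i$ generate $H$. The paper does this inside $S$ by explicit commutator calculus: setting $Z=\gen{h_1,\ldots,h_n}$, it first shows $\gamma_3(H)\subseteq Z$ (triple commutators of the $g_i$ equal those of the $h_i$), then $\gamma_2(S)\subseteq Z$ using $[S,A]\subseteq\gamma_3(S)$, and finally deduces $Z=H$ from $S=HA\subseteq ZA$ and $H\cap A=1$. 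You instead pass to the quotient $S/N$ with $N=\gamma_2(S)A$ (normal in $S$ since $\gamma_2(S)$ is normal and $[A,S]\subseteq\gamma_3(S)$ -- your set-theoretic containment $[N,S]\subseteq[\gamma_2(G),S][A,S]$ should strictly be run through the identity $[xy,s]=[x,s]^y[y,s]$, but since everything lands in the normal subgroup $\gamma_3(G)$ the conclusion stands), identify $S/N$ with both $G/\gamma_2(G)$ and $H/\gamma_2(H)$ via $G\cap A=H\cap A=1$ and $\gamma_2(G)=\gamma_2(H)$, and then invoke the Burnside basis theorem through $\gamma_2(H)\subseteq\Phi(H)$. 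What your route buys is brevity: the Frattini argument makes it unnecessary to prove that $\gen{h_1,\ldots,h_n}$ contains $\gamma_2(H)$, which is exactly the part the paper establishes by hand (and which, as a by-product, the paper's argument exhibits explicitly). Two small remarks: your parenthetical claim that $n$ is the minimal number of generators is neither assumed in the statement nor needed by your argument, so it should be dropped; and the isomorphisms $G/\gamma_2(G)\cong S/N\cong H/\gamma_2(H)$ deserve one line each ($S=GN=HN$, $G\cap N=\gamma_2(G)$, $H\cap N=\gamma_2(H)$), but these are immediate from the trivial intersections with $A$ exactly as you indicate.
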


\begin{proof}
We start by claiming that, if $\graffe{y_i\mid 1\leq i\leq m}$ is a generating set of $G$, where each $y_i$ is written as $y_i=z_ib_i$ with $z_i\in H$ and $b_i\in  A$, then $H$ is generated by $z_1,\ldots,z_m$.
To this end, call $Z=\gen{z_i \mid 1\leq i\leq m}$: we show that $Z=H$. First observe that $\gamma_3(H)\subseteq Z$: indeed, for any choice of $i,j,\ell$, one has $[y_i,y_j,y_\ell]=[z_i,z_j,z_\ell]$. Now, $H$ being a group base of $kG$, by Proposition \ref{prop:propsS-gp}(1) the subgroups $\gamma_3(H)$ and $\gamma_3(S)$ coincide and thus it follows that $Z$ contains $\gamma_3(S)$. Moreover, since $[S,A]$ is contained in $\gamma_3(S)$, we derive that actually $Z$ contains $\gamma_2(S)$. In particular, $Z$ is normal in $H$ and so Proposition \ref{prop:sameA} yields $S=HA\subseteq ZA$ and thus $Z=H$.

Write now each $g_i$ as $g_i=h_ic_i$, where $h_i\in H$ and $c_i\in A$; this can be done thanks to Proposition \ref{prop:sameA}. A consequence of the above claim is that $h_1,\ldots,h_n$ generate $H$. We conclude by defining $a_i=c_i^{-1}$.
\end{proof}

\subsection{Main results}\label{sec:main-small}

\noindent
In the present section, we leverage results from Section \ref{sec:complement} to solve the (MIP) for certain families of groups. Recall that, without loss of generality, we have assumed in Section~\ref{sec:complement} that $A\cap\ZG(S)=1$. The following is our first main result.

\begin{theorem}\label{th:MaxAbelianCentDerSubg}
Assume that $p$ is odd, $\gamma_2(G)^p\gamma_4(G)=1$, and $\Cyc_G(\gamma_2(G))$ is a maximal subgroup of $G$ that is abelian. Then the small group algebra of $kG$ determines $G$ up to isomorphism. Moreover, the \emph{(MIP)} has a positive answer for $G$.
\end{theorem}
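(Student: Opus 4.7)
Any other group base $H$ of $kG$ sits inside $S$ as a normal subgroup sharing the complement $A$ (Proposition~\ref{prop:sameA}), and under the standing assumption $A\cap\ZG(S)=1$ the complement $A$ is elementary abelian by Proposition~\ref{prop:propsS-gp}(3). The plan is to build an explicit isomorphism $\phi\colon G\to H$ using a generating set adapted to the abelian maximal subgroup. Let $M=\Cyc_G(\gamma_2(G))$, so $M$ is abelian of index $p$ by hypothesis. Pick $x\in G\setminus M$ and complete it to a minimal generating set $(x_1,\ldots,x_n)$ of $G$ with $x_1=x$ and $x_2,\ldots,x_n\in M$. Lemma~\ref{lemma:gensG->H} then supplies $a_1,\ldots,a_n\in A$ such that the elements $h_i:=x_ia_i$ generate $H$, and the candidate map is $\phi(x_i)=h_i$, extended multiplicatively.

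To show that $\phi$ is a well-defined homomorphism, it suffices to verify that every defining relation of $G$ among the $x_i$ holds for the $h_i$ inside $S$. First I would dispose of the $p$-th power relations: since $A$ is elementary abelian, $[x_i,a_i]\in\gamma_3(G)\subseteq\ZG(S)$, and $\gamma_2(G)^p=1$, the Hall--Petresco expansion for $p$ odd gives $(x_ia_i)^p=x_i^p$. For commutator relations, Lemma~\ref{lemma:comm-formulas} combined with $[A,A]=[A,\gamma_2(G)]=1$ yields
\[
[h_i,h_j]=[x_i,x_j]\cdot[x_i,a_j]\cdot[a_i,x_j],
\]
with the correction factors lying in the central subgroup $\gamma_3(G)$. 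Triple-commutator relations transfer for free, because $\gamma_3(G)=\gamma_3(H)$ is central and elementary abelian, so once pairwise commutators match the class-$3$ relations follow.

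The core task is to control the residual correction $[x_i,a_j]\cdot[a_i,x_j]$. Expanding each $a_j$ as a product of basic units $a_{kl}=1+\overline{x_k}\,\overline{x_l}$ from \eqref{eq:a} --- the only ones that fail to be central in $S$ under the class-$3$ constraint --- one has $[g,a_{kl}]=[[g,x_k],x_l]$ for every $g\in G$. The hypothesis on $M$ now annihilates most of these brackets: abelianness of $M$ forces $[g,x_k]=1$ whenever $g,x_k\in M$, and the identity $M=\Cyc_G(\gamma_2(G))$ forces $[[g,x_k],x_l]=1$ whenever $x_l\in M$, because $[g,x_k]\in\gamma_2(G)$. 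Since $x_1=x$ is the only generator outside $M$, the only surviving contributions are from $a_{11}=1+\overline{x}^2$ and, when $g=x$, from terms $a_{j1}$ with $x_j\in M$. The hard part will be showing that the remaining identities in $\gamma_3(G)$ actually hold: for $i,j\geq 2$ one needs $[x_i,a_j]=[x_j,a_i]$, which is a compatibility between the $a_{11}$-components of $a_i,a_j$ and the triple brackets $[x_i,x,x]\in\gamma_3(G)$. I expect to handle this either by showing these components vanish for structural reasons, or by replacing each $x_i$ with $x_iz_i$ for suitable $z_i\in\ZG(G)\cap M$ --- a change that preserves the generating set while modifying the $a_i$ --- to absorb the discrepancy inside $\gamma_3(G)$. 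Once $\phi$ is known to be a homomorphism, surjectivity onto $\langle h_1,\ldots,h_n\rangle=H$ is immediate and $|G|=|H|$ upgrades it to an isomorphism, establishing both $G\cong H$ and that the small group algebra determines $G$.
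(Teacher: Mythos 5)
Your setup is the same as the paper's: embed $H$ in $S$, invoke Proposition~\ref{prop:sameA} and Lemma~\ref{lemma:gensG->H} to write generators of $H$ as $x_ia_i$ with $a_i\in A$, and reduce everything to matching relations, together with the correct observation that, since $M=\Cyc_G(\gamma_2(G))$ centralizes $\gamma_2(G)$, the only units of $A$ that can produce a discrepancy are those involving the single generator $x_1\notin M$, so the corrections are powers of $[[x_i,x_1],x_1]\in\gamma_3(G)$. But the proposal stops exactly where the real work begins: you explicitly leave open whether these residual corrections vanish, and neither of the two fixes you suggest can close the gap. Multiplying $x_i$ by a central element $z_i\in\ZG(G)\cap M$ changes no commutator at all ($[x_iz_i,y]=[x_i,y]$ for every $y\in S$), so it cannot absorb a nontrivial discrepancy in $\gamma_3(G)$; and the corrections do not vanish ``for structural reasons'' internal to $G$, because they depend on the coefficients of the unknown elements $a_i$ along $1+\overline{x_1}^2$, over which Lemma~\ref{lemma:gensG->H} gives no control. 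Note also that the problem is not only the case $i,j\geq 2$ you single out: for $i\geq 2$ one gets $[x_ia_i,x_1a_1]=[x_i,x_1]\,[[x_i,x_1],x_1]^{e_1}$ (with $e_1$ the $1+\overline{x_1}^2$-coefficient of $a_1$), which likewise need not equal $[x_i,x_1]$.

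The paper closes the gap with two ingredients missing from your sketch. First, for $i,j\geq 2$ no computation of the correction is made at all: since $x_ia_i\in H\cap\Cyc_S(\gamma_2(S))=\Cyc_H(\gamma_2(H))$ and Bagi\'nski's invariant (the last item in the list of Section~\ref{sec:invariants}, applicable because $\gamma_2(G)$ is elementary abelian and $\Cyc_G(\gamma_2(G))$ has index $p$) forces $\Cyc_H(\gamma_2(H))$ to be abelian and maximal, one gets $[x_ia_i,x_ja_j]=1=[x_i,x_j]$ outright; this external input about $H$ is exactly what makes the ``hard part'' disappear. Second, the relations with $x_1$ are repaired not by central multipliers but by replacing $x_ia_i$ with $h_i=x_i\beta_i a_i$ for suitable $\beta_i\in\gamma_2(G)=\gamma_2(H)$ (so $h_i$ still lies in $H$), chosen so that $[\beta_i,x_1]=([a_i,x_1][x_i,a_1])^{-1}$; such $\beta_i$ exist precisely because $M$ abelian gives $\gamma_3(G)=[\gamma_2(G),\langle x_1\rangle]$, and these $\beta_i$ do not disturb the relations among the $h_i$ with $i,j\geq 2$ since $[\gamma_2(G),M]=[\gamma_2(G),A]=1$. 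With these two inputs all commutator, triple-commutator and $p$-th power relations (the latter using $p$ odd and $\gamma_2(G)$, $A$ elementary abelian) transfer exactly, which is the step your argument lacks.
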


\begin{proof}
Set $C=\Cyc_G(\gamma_2(G))$ and let $H$ be another group base of $kG$, which we consider to be embedded in the group $S$ of normalized units of the small group algebra of $kG$. Then Proposition \ref{prop:sameA} yields that $S=G\rtimes A=H\rtimes A$. Now, by Proposition \ref{prop:propsS-gp}(1), we have $\gamma_2(G)=\gamma_2(H)$, from which we derive that $\Cyc_S(\gamma_2(S))=CA=\Cyc_H(\gamma_2(H))A$. We will show that, for a certain family of generators $g_1,\ldots,g_n$ of $G$, there exist generating elements $h_1,\ldots, h_n$ of $H$   satisfying the same relations as $g_1,\ldots, g_n$. 
To this end, let $g_1,\ldots,g_n$ be a minimal generating set of $G$ satisfying the following: 
\begin{itemize}
\item $C=\gen{g_2,\ldots,g_n}\gamma_2(G)$,
\item for $|\gamma_2(G):\gamma_2(G)\cap G^p|=p^{r-1}$, the elements $g_2,\ldots, g_r$ are such that $[g_2,g_1],\ldots,[g_{r},g_1]$ form a basis of $\gamma_2(G)$ modulo $\gamma_2(G)\cap G^p$.
\end{itemize}
The following ensures the existence of such a generating set.
Since $C$ is abelian, $\gamma_2(G)=[\gen{g_1},C]$ and, since $[C,\gamma_2(G)]=1$, we have that $\gamma_3(G)=[\gen{g_1},\gamma_2(G)]$.
Let now $a_1,\ldots, a_n$ be as in Lemma \ref{lemma:gensG->H} and observe that $C_H=\Cyc_H(\gamma_2(H))$ is generated by $g_2a_2,\ldots, g_n a_n$ modulo $\gamma_2(H)$.
Thanks to Bagi\'nski's invariant from Section \ref{sec:invariants}(9), the groups $C$ and $C_H$ are maximal subgroups respectively in $G$ and $H$ and both of them are abelian. In other words, for each $i,j\in\graffe{2,\ldots, n}$, it holds that $[g_ia_i,g_ja_j]=[g_i,g_j]=1$. 
Denote by ${\bf g}$ the vector $(g_1,\ldots,g_n)$ and, for each $\delta=(\delta_1,\ldots, \delta_n)\in\Z^n$, write
${\bf g}^{\delta}=g_1^{\delta_1}\ldots g_n^{\delta_n}.$
For each $i\in\graffe{1,\ldots, n}$, we now choose $\beta_i\in \gamma_2(G)=\gamma_2(H)$ such that it satisfies $[\beta_i,g_1]=
([a_i,g_1][g_i,a_1])^{-1}$.
Define, moreover, for each $i$, the element $h_i$ to be $g_i\beta_ia_i$ and ${\bf h}=(h_1,\ldots, h_n)$. Then, for $i,j,\ell\in\graffe{1,\ldots, n}$ the following hold: 
\begin{itemize}
\item for $i,j\geq 2$, one has $[h_i,h_j]=[g_i,g_j]$ because $A\gamma_2(G)$ is abelian and $[g_ia_i,g_ja_j]=1$;
\item $[h_i,h_1]=[g_i,g_1]$ thanks to the definition of the $\beta_i$'s:
\begin{align*}
[h_i,h_1] & = [g_i\beta_ia_i,g_1a_1] = [g_i\beta_ia_i, g_1][g_i,a_1] =[g_i,g_1][\beta_ia_i,g_1][g_i,a_1]\\
& = [g_i,g_1][\beta_i,g_1][a_i,g_1][g_i,a_1]=[g_i,g_1];
\end{align*}
\item $[h_i,h_j,h_\ell]=[g_i,g_j,g_\ell]$ because $[G,A]\subseteq\gamma_3(G)\subseteq\ZG(G)$;
\item for each $\delta\in p\Z^n$, one has ${\bf h}^{\delta}={\bf g}^{\delta}$ because $[A\gamma_2(G),G]$ is central in $S$, $\gamma_2(G)$ is elementary abelian, and $p$ is odd.
\end{itemize}
This shows that $G$ and $H$ have the same presentations in terms of $g_1,\ldots,g_n$ and $h_1,\ldots, h_n$, respectively. So, in particular, $G$ and $H$ are isomorphic. 
\end{proof}

\noindent
The small group algebra has been used to (positively) solve the (MIP) for $2$-generated groups $G$ satisfying $\gamma_2(G)^p\gamma_4(G) = 1$ \cite{MM20}. Extending the last result to $3$-generated groups is not straightforward; Theorem~\ref{th:K_G} is a first step in this direction. 

\begin{lemma}\label{lemma:K_G}
Assume that $G$ has class $3$ and $|G:\Phi(G)|=|G:\ZG_2(G)|=p^3$. 
Then the following hold: 
\begin{enumerate}[label=$(\arabic*)$]
\item $|\ls{2}{G}:\ls{3}{G}|\leq p^3$, 
\item there exists a unique subgroup $K_G$ of $G$, containing $\ZG_2(G)$, with the property that the following sequence induced by the commutator map
is exact:
\[
1\rightarrow \wedge^2 (K_G/\ZG_2(G))\rightarrow \wedge^2(G/\ZG_2(G))\rightarrow \gamma_2(G)/\Gamma(G)\rightarrow 1.
\]
\item if $|\ls{2}{G}:\Gamma(G)|=p^3$, then $K_G=\ZG_2(G)$, 
\item if $|\ls{2}{G}:\Gamma(G)|<p^3$, then $|\ls{2}{G}:\Gamma(G)|=p^2$ and $K_G$ is maximal in $G$.
\end{enumerate}
\end{lemma}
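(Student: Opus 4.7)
My plan is to introduce the commutator pairing on $\bar G := G/\ZG_2(G)$ and extract both the cardinalities in (1) and the subgroup $K_G$ from the kernel of this pairing. For part (1), since $\gamma_2(G)^p=1$ (by the standing assumption $\gamma_2(G)^p\gamma_4(G)=1$) and $|G:\Phi(G)|=p^3$ forces $G$ to be $3$-generated, the images in $\gamma_2(G)/\gamma_3(G)$ of the $\binom{3}{2}=3$ basic commutators in any minimal generating system span $\gamma_2(G)/\gamma_3(G)$; an elementary abelian group on $3$ generators has order at most $p^3$, settling (1).

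Next I would show that $\Phi(G)=\ZG_2(G)$, so that $\bar G$ is elementary abelian of rank $3$. The inclusion $\gamma_2(G)\subseteq\ZG_2(G)$ is immediate from $G$ having class $3$, and $G^p\subseteq \ZG_2(G)$ follows from the class-$3$ identity
\[
[a^p,b]=[a,b]^p\cdot[[a,b],a]^{\binom{p}{2}},
\]
whose first factor is trivial by $\gamma_2(G)^p=1$ and whose second factor lies in $\gamma_3(G)\subseteq\ZG(G)$ (and vanishes entirely for $p$ odd); in any case $[a^p,b]\in\ZG(G)$ for all $b$. Combined with $|G:\Phi(G)|=|G:\ZG_2(G)|=p^3$, this forces equality.

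Then the commutator bracket descends to a well-defined alternating $\F_p$-bilinear map $c\colon\bar G\times\bar G\to\gamma_2(G)/\Gamma(G)$: for $z\in\ZG_2(G)$, Lemma~\ref{lemma:comm-formulas} shows that $[gz,g']$ and $[g,g']$ differ by factors in $[\ZG_2(G),G]\cdot\gamma_3(G)\subseteq\Gamma(G)$, and the same lemma supplies bilinearity modulo $\gamma_3(G)$. Since commutators generate $\gamma_2(G)$, the induced map $\tilde c\colon\wedge^2\bar G\to\gamma_2(G)/\Gamma(G)$ is surjective. Moreover $c$ has trivial radical: by the defining property of $\ZG_2(G)$, one has $[x,G]\subseteq\Gamma(G)=\ZG(G)\cap\gamma_2(G)$ precisely when $x\in\ZG_2(G)$. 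But on the $3$-dimensional space $\bar G$ no non-zero alternating form valued in a $1$-dimensional space can be non-degenerate, because the radical of such a form has even codimension. Hence $\dim_{\F_p}\gamma_2(G)/\Gamma(G)\in\{2,3\}$, and $N:=\ker\tilde c$ has dimension $0$ or $1$ respectively.

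Splitting on the dimension finishes the proof. If $|\gamma_2(G):\Gamma(G)|=p^3$ then $N=0$, which forces $\wedge^2(K_G/\ZG_2(G))=0$, and hence $K_G=\ZG_2(G)$, giving (3). If instead $|\gamma_2(G):\Gamma(G)|=p^2$ then $\dim N=1$; here I would invoke the classical linear-algebra fact that every bivector in $\wedge^2 V$ with $\dim V=3$ is decomposable (for dimensional reasons $\omega\wedge\omega=0$). Writing $N=\F_p(v_1\wedge v_2)$, I set $K_G/\ZG_2(G)=\gen{v_1,v_2}$, obtaining a maximal subgroup $K_G$ of $G$ with $\wedge^2(K_G/\ZG_2(G))=N$, which is (4). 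Uniqueness comes from the intrinsic reconstruction $\gen{v_1,v_2}=\{x\in\bar G\mid x\wedge v_1\wedge v_2=0\text{ in }\wedge^3\bar G\}$, independent of the chosen decomposition of the generator of $N$. The main obstacle I expect is verifying cleanly that the commutator descends to $\gamma_2(G)/\Gamma(G)$ with trivial radical, since this is exactly what rules out the otherwise awkward possibility $\dim\gamma_2(G)/\Gamma(G)=1$ and drives the dichotomy in (3)--(4).
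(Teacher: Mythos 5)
Your proof is correct and takes essentially the same route as the paper: show $\Phi(G)=\ZG_2(G)$, descend the commutator map to a non-degenerate alternating form on the $3$-dimensional space $G/\ZG_2(G)$, use the parity of the rank to force $|\gamma_2(G):\Gamma(G)|\in\{p^2,p^3\}$, and recover $K_G$ from the (decomposable) kernel of the induced map on $\wedge^2(G/\ZG_2(G))$. The differences are only expository — you justify $\Phi(G)=\ZG_2(G)$ and the decomposability of the kernel generator more explicitly than the paper — and the short leap $\wedge^2(K_G/\ZG_2(G))=0\Rightarrow K_G=\ZG_2(G)$ in case (3) appears in the paper's argument in exactly the same form.
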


\begin{proof}
It follows from the assumptions that $\Phi(G)=\ZG_2(G)$. 
Now, to lighten the notation, we write $U=G/\Phi(G)$, which is a $3$-dimensional vector space over $k$.  Since $\gamma_2(G)$ is elementary abelian, the commutator map induces a bilinear map $U\times U\rightarrow \gamma_2(G)/\gamma_3(G)$. The last map being alternating, the universal property of wedge products induces a surjective linear map $\wedge^2U\rightarrow \gamma_2(G)/\gamma_3(G)$ and, as $\wedge^2U$ has dimension $3$, the first claim follows.

We now prove (2)-(3)-(4) simultaneously. For this, note that $\gamma_3(G)\subseteq \Gamma(G)$ and, since $\Phi(G)=\ZG_2(G)$, the commutator map induces a non-degenerate alternating map $\nu:U\times U\rightarrow\gamma_2(G)/\Gamma(G)$. In particular, the dimension of $U$ being odd, we get that $$p^2\leq |\ls{2}{G}:\Gamma(G)|\leq |\gamma_2(G):\gamma_3(G)|\leq p^3.$$
Let now $W$ be a subspace of $U$ such that the following short exact sequence, induced by $\nu$, is exact:
\[
1\rightarrow \wedge^2W\longrightarrow\wedge^2U\overset{\tilde{\nu}}{\longrightarrow} \gamma_2(G)/\Gamma(G)\rightarrow 1.
\]
The existence of $W$ is guaranteed by the following considerations. The dimension of $\wedge^2U$ being $3$, one has that $|\gamma_2(G):\Gamma(G)|=p^3$ implies that $\tilde{\nu}$ is injective and $W=0$. 
In particular, in the last case, $W$ is unique with this property and $K_G=\ZG_2(G)$. Assume now that $|\gamma_2(G):\Gamma(G)|=p^2$. 
Then $\tilde{\nu}$ has a nontrivial kernel and, $\tilde{\nu}$ being induced by $\nu$, there is a subspace $W$ of $U$ such that $\ker\tilde{\nu}=\wedge^2W$. The map $\tilde{\nu}$ being surjective, it follows that $W$ has dimension $2$ and the fact that $|\ls{2}{G}:\Gamma(G)|=p^2$ ensures that $W$ is unique with this property. In particular, $K_G$, defined by $K_G/\ZG_2(G)=W$, is maximal.
\end{proof}


\noindent
In the remaining part of Section~\ref{sec:SmallGroupRing}, let $K_G$ denote the characteristic subgroup from Lemma \ref{lemma:K_G}.

\begin{theorem}\label{th:K_G}
Assume that $p$ is odd, 
$\gamma_2(G)^p\gamma_4(G)=1$, and $|G:\Phi(G)|=|G:\ZG_2(G)| = p^3$. If $G$ has class $3$, assume, furthermore, that $[[K_G, G], G] \subseteq [K_G, \gamma_2(G)]$. Then the isomorphism type of $G$ is determined by the unit group of the small group algebra of $kG$. Moreover, the \emph{(MIP)} has a positive answer for $G$.
\end{theorem}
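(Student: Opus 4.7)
The plan is to adapt the strategy used in the proof of Theorem~\ref{th:MaxAbelianCentDerSubg}. Let $H$ be a second group base of $kG$, embedded in the normalized unit group $S$ of the small group algebra of $kG$: by Proposition~\ref{prop:sameA} we may assume $S = G \rtimes A = H \rtimes A$, and Proposition~\ref{prop:propsS-gp} together with Lemma~\ref{lemma:Z2} ensures that the subgroups $\gamma_2$, $\gamma_3$, $\Gamma$, and $\ZG_2$ coincide for $G$ and $H$. A preliminary step is to show that $K_G$ is itself determined by $kG$, in the sense that $K_G A = K_H A$ inside $S$: this follows from Lemma~\ref{lemma:K_G}, because $K_G/\ZG_2(G)$ is recovered from the commutator map $\wedge^2(G/\ZG_2(G)) \to \gamma_2(G)/\Gamma(G)$ whose source and target are invariants.

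Following the template of Theorem~\ref{th:MaxAbelianCentDerSubg}, I would fix a minimal generating set $(g_1, g_2, g_3)$ of $G$ with $g_2, g_3 \in K_G$ mapping to a basis of $K_G/\ZG_2(G)$ in the case $K_G$ is maximal. Lemma~\ref{lemma:gensG->H} then produces $a_1, a_2, a_3 \in A$ such that $h_i := g_i a_i$ generate $H$, and the aim is to choose $\beta_1, \beta_2, \beta_3 \in \gamma_2(G)$ so that the modified generators $h_i' := g_i \beta_i a_i$ satisfy exactly the same commutator, triple-commutator, and $p$-power relations as the $g_i$. A careful expansion using Lemma~\ref{lemma:comm-formulas}, together with $[A,A] = [A,\gamma_2(G)] = 1$ and the centrality of $\gamma_3(G)$ in $S$, produces
\[
[h_i', h_j'] = [g_i, g_j] \cdot [g_i, a_j][a_i, g_j][g_i, \beta_j][\beta_i, g_j],
\]
so that matching commutators reduces to solving, in the $\F_p$-vector space $\gamma_3(G)$, the system
\[
[\beta_i, g_j] - [\beta_j, g_i] = -[a_i, g_j] - [g_i, a_j] =: \alpha_{ij}, \qquad (i, j) \in \{(1,2),(1,3),(2,3)\}.
\]

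With $\beta_1 = 1$, the two equations with $i = 1$ determine $\beta_j$ (for $j = 2, 3$) modulo $\gamma_2(G) \cap \Cyc_G(g_1)$, their solvability following from the explicit description~\eqref{eq:a} of the $a_i$ exactly as in the proof of Theorem~\ref{th:MaxAbelianCentDerSubg}. The hard part is then the remaining $(2, 3)$ equation: one must show that $\alpha_{23}$ can be realized as $[\beta_2, g_3] - [\beta_3, g_2]$ with $\beta_2, \beta_3$ lying in the affine subspaces just fixed. When $K_G$ is maximal (and $g_2, g_3 \in K_G$), this is exactly where the hypothesis $[[K_G, G], G] \subseteq [K_G, \gamma_2(G)]$ comes in: combined with the Hall-Witt/Jacobi identity applied in the class-$3$ group $G$, it forces all relevant $\gamma_3$-corrections to lie in $[K_G, \gamma_2(G)] = [\langle g_2, g_3\rangle, \gamma_2(G)]$, which is precisely the image of the residual map. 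When $K_G = \ZG_2(G)$, the hypothesis is automatic since $[[\ZG_2(G), G], G] = 1$, and a direct symmetry argument exploiting that $\wedge^2(G/\ZG_2(G)) \to \gamma_2(G)/\Gamma(G)$ is an isomorphism in this case suffices.

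Once the $\beta_i$'s have been chosen, the triple-commutator relations and the $p$-power relations $(h_i')^{p^{\lambda_i}} = g_i^{p^{\lambda_i}}$ match automatically: the former because any discrepancy would lie in $\gamma_4(G) = 1$, the latter because $A$ and $\gamma_2(G)$ are elementary abelian and $p$ is odd, so the Hall-Petrescu correction terms vanish, exactly as at the end of the proof of Theorem~\ref{th:MaxAbelianCentDerSubg}. This produces identical presentations of $G$ and $H$ on the chosen generators, hence $G \cong H$; the (MIP) for $G$ follows since $kG \cong kH$ descends to an isomorphism of the corresponding small group algebras. The principal obstacle is the compatibility of the $(2, 3)$ equation with the other two, which the hypothesis on $[[K_G, G], G]$ is designed precisely to overcome.
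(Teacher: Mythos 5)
Your skeleton matches the paper's (embed both group bases in $S$, use Proposition~\ref{prop:sameA} and Lemma~\ref{lemma:gensG->H}, correct the generators $g_ia_i$ by elements $\beta_i\in\gamma_2(G)$, and invoke $[[K_G,G],G]\subseteq[K_G,\gamma_2(G)]$ for the relation between the two generators inside $K_G$), but the central step has a genuine gap. You insist that the corrected generators satisfy \emph{exactly} the same commutator relations, and therefore have to solve all three equations $[\beta_i,g_j]-[\beta_j,g_i]=\alpha_{ij}$. For the two equations involving $g_1$ you set $\beta_1=1$ and claim that $[\beta_j,g_1]=-\alpha_{1j}$ is solvable ``exactly as in the proof of Theorem~\ref{th:MaxAbelianCentDerSubg}''. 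That argument is not available here: in Theorem~\ref{th:MaxAbelianCentDerSubg} solvability came from $\gamma_3(G)=[\gen{g_1},\gamma_2(G)]$, which is a consequence of $\Cyc_G(\gamma_2(G))$ being an abelian maximal subgroup; under the hypotheses of Theorem~\ref{th:K_G} the subgroup $[\gamma_2(G),\gen{g_1}]$ can be a proper subgroup of $\gamma_3(G)$ (indeed $g_1$ may even centralize $\gamma_2(G)$), and nothing forces the specific elements $\alpha_{1j}$ to lie in it. Moreover, the compatibility of the $(2,3)$ equation with the affine constraints you have already imposed on $\beta_2,\beta_3$ is precisely the hard point, and it is only gestured at via ``Hall--Witt/Jacobi'': the hypothesis $[[K_G,G],G]\subseteq[K_G,\gamma_2(G)]$ gives surjectivity of $(\beta_2,\beta_3)\mapsto[\beta_3,g_2][g_3,\beta_2]$ onto $[K_G,\gamma_2(G)]$ only when $\beta_2,\beta_3$ range freely over $\gamma_2(G)$ (using that $g_2,g_3$ generate $K_G$ modulo $\gamma_2(G)\ZG(G)$), not when they are confined to prescribed cosets.

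The paper's proof avoids this system altogether, and that is the idea missing from your proposal: exact equality is arranged only for the single relation $[h_3,h_2]=[g_3,g_2]$, by choosing $\beta_2,\beta_3$ with $[\beta_3,g_2][g_3,\beta_2]=([a_3,g_2][g_3,a_2])^{-1}$ (this is the only place the extra hypothesis is used), while for $[h_2,h_1]$ and $[h_3,h_1]$ one settles for congruence modulo $\gamma_3(G)$. That this weaker matching suffices rests on the observation that $[g_2,g_1]$ and $[g_3,g_1]$ form a basis of $\gamma_2(G)$ modulo $\Gamma(G)$ and do not lie in $G^p$, so their exact values can be absorbed into the presentation; your argument never supplies this justification and instead relies on the unproved exact solvability above. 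In the case $|\gamma_2(G):\Gamma(G)|=p^3$ (where $K_G=\ZG_2(G)$ and $\Gamma(G)=\gamma_3(G)$) the paper shows that no correction is needed at all, i.e.\ $h_i=g_ia_i$ already works, whereas your ``symmetry argument'' again aims at an exact matching that is neither needed nor established. Finally, the statement does not assume that $G$ has class $3$; the case $\gamma_3(G)=1$ must be dispatched first via Sandling's invariant, Section~\ref{sec:invariants}(6), which your proposal omits.
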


\begin{proof}
If $\gamma_3(G) = 1$, then the result follows from the work of Sandling, cf.\ Section \ref{sec:invariants}(6). Assume now that $\gamma_3(G) \neq 1$, so it follows from the assumptions that $\Phi(G)=\ZG_2(G)$.
Similarly as in the proof of Theorem~\ref{th:MaxAbelianCentDerSubg}, let $H$ be another group base of $kG$, which we consider to be embedded in the group $S$ of normalized units of the small group algebra of $kG$. Then the equality $S=G\rtimes A=H\rtimes A$ holds by Proposition \ref{prop:sameA} and $\gamma_2(G)=\gamma_2(H)$, by Proposition \ref{prop:propsS-gp}(1). 
We will show that, for a certain family of generators $g_1,g_2,g_3$ of $G$, there exist generating elements $h_1,h_2, h_3$ of $H$   satisfying the same relations as $g_1,g_2, g_3$.  We will study two cases separately.

First, suppose that $|\ls{2}{G}:\Gamma(G)| = p^3$ and let $g_1, g_2, g_3$ be any minimal generating set of $G$. By Lemma~\ref{lemma:gensG->H} there are elements $a_1,a_2,a_3$ in $A$ such that $h_i = g_ia_i$ generate $H$. We will show that $h_1,h_2,h_3$ already satisfy the same relations as $g_1,g_2,g_3$. From Lemma~\ref{lemma:K_G} we know that $K_G = Z_2(G)$, $\Gamma(G) = \gamma_3(G)$, and the commutators $[g_2,g_1], [g_3,g_1], [g_3, g_2]$ form a basis of $\gamma_2(G)$ modulo $\gamma_3(G)$ and they do not lie in $G^p$. Notice that for $i,j,\ell \in \{1,2,3 \}$ and $m$ any positive integer we have 
\begin{itemize}
\item $[g_i, g_j] \equiv [h_i,h_j] \mod \gamma_3(G)$ because $[G,A]\subseteq \gamma_3(G)$;
\item $[g_i,g_j,g_\ell] = [h_i,h_j,h_\ell]$ as $[G,A] \subseteq Z(G)$;
\item $g_i^{p^m} = h_i^{p^m}$ because $A\gamma_2(G)$ is elementary abelian, $[G,A]$ is central, and $p$ is odd.
\end{itemize}
It follows that $h_1,h_2$ and $h_3$ satisfy the same relations as $g_1,g_2$ and $g_3$.

We now consider the case $|\ls{2}{G}:\Gamma(G)| = p^2$, which is the only case left in view of Lemma~\ref{lemma:K_G}. By Lemma~\ref{lemma:K_G}, the subgroup $K_G$ is maximal in $G$. We choose a generating set $g_1,g_2,g_3$ of $G$ such that $g_2,g_3 \in K_G$ and so, by Lemma~\ref{lemma:gensG->H}, there exist $a_1,a_2,a_3 \in A$ such that $g_1a_1, g_2a_2, g_3a_3$ generate $H$. Let $\beta_2$ and $\beta_3$ be elements of $\gamma_2(G)$ such that 
\[[\beta_3, g_2][g_3,\beta_2]=([a_3,g_2][g_3,a_2])^{-1}.\]
 Note that such $\beta_2$ and $\beta_3$ exist because $[a_3,g_2][g_3,a_2] \in [K_G,G,G]\subseteq [K_G,\gamma_2(G)]$ and $g_2, g_3$ form a basis of $K_G$ modulo $\gamma_2(G)\ZG(G)$. Set $h_1 = g_1a_1$, $h_2 = g_2\beta_2a_2$, and $h_3 = g_3\beta_3a_3$. As $\gamma_2(G) = \gamma_2(H)$, we know that $h_2$ and $h_3$ are elements of $H$. Now $[g_2,g_1]$ and $[g_3,g_1]$ form a basis of $\gamma_2(G)$ modulo $\Gamma(G)$ and do not lie in $G^p$. So  for $i,j,\ell \in \{1,2,3 \}$ and $m$ any positive integer we have:
\begin{itemize}
\item $[g_2, g_1] \equiv [h_2,h_1] \bmod \gamma_3(G)$ and $[g_3, g_1] \equiv [h_3,h_1] \bmod \gamma_3(G)$ because $[G,A]\subseteq \gamma_3(G)$;
\item $[h_3,h_2] = [g_3,g_2][\beta_3, g_2][g_3,\beta_2][a_3,g_2][g_3,a_2] = [g_3,g_2]$;
\item $[g_i,g_j,g_\ell] = [h_i,h_j,h_\ell]$ since $[G,A] \subseteq Z(G)$;
\item $g_i^{p^m} = h_i^{p^m}$ because $A\gamma_2(G)$ is elementary abelian, $[G,A]$ is central, and $p$ is odd.
\end{itemize}
This shows that $G$ and $H$ have the same presentations in terms of respectively $g_1,g_2,g_3$ and $h_1,h_2,h_3$  so, in particular, $G$ and $H$ are isomorphic.
\end{proof}

\subsection{Corollaries and further discussion}\label{sec:cor-small}

\noindent
The upcoming series of corollaries of our results from Section~\ref{sec:main-small} yields a number of new classes of $p$-groups of orders $p^6$ and $p^7$ for which the (MIP) is positively solved.

\begin{corollary}\label{cor:gamma3p}
Assume that $p$ is odd, $\gamma_2(G)^p\gamma_4(G)=1$, and $|G:\Phi(G)|=|G:\ZG_2(G)|=p^3$. The isomorphism type of $G$ is determined by the normalized units of the small group algebra of $kG$ if one of the following holds:
\begin{enumerate}[label=$(\arabic*)$]
\item $|\gamma_2(G):\Gamma(G)|=p^3$,
\item $|\gamma_3(G)|=p$.
\end{enumerate}
\end{corollary}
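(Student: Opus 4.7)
The plan is to deduce the corollary from Theorem~\ref{th:K_G} by verifying, in each of the two cases, the additional hypothesis $[[K_G, G], G] \subseteq [K_G, \gamma_2(G)]$ that is needed when $G$ has class exactly $3$. When $\gamma_3(G) = 1$, Theorem~\ref{th:K_G} applies unconditionally, so I may assume class $3$ throughout.

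Case (1) will be immediate: Lemma~\ref{lemma:K_G}(3) gives $K_G = \ZG_2(G)$, so $[K_G, G] \subseteq \ZG(G) \cap \gamma_2(G) = \Gamma(G) \subseteq \ZG(G)$ and hence $[[K_G, G], G] = 1$, which trivially lies inside $[K_G, \gamma_2(G)]$. For case (2), if $|\gamma_2(G):\Gamma(G)| = p^3$ we fall back on case (1); otherwise Lemma~\ref{lemma:K_G}(4) puts us in the situation where $|\gamma_2(G):\Gamma(G)| = p^2$ and $K_G$ is maximal in $G$. Since $|\gamma_3(G)| = p$ and both sides of the target inclusion sit inside $\gamma_3(G)$, the inclusion fails only if $[K_G, \gamma_2(G)] = 1$ while $[[K_G, G], G] = \gamma_3(G)$. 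So I will assume $[K_G, \gamma_2(G)] = 1$ and show that this forces $[[K_G, G], G] = 1$.

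To this end, I will fix a minimal generating set $g_1, g_2, g_3$ of $G$ with $g_2, g_3 \in K_G$. Using $[K_G, K_G] \subseteq \Gamma(G) \subseteq \ZG(G)$ (built into $K_G$ by Lemma~\ref{lemma:K_G}(2)) together with the standing assumption $[K_G, \gamma_2(G)] = 1$ and the commutator formulas in nilpotency class $3$, a routine calculation reduces $[[K_G, G], G]$ to the subgroup generated by the two triple commutators $[[g_2, g_1], g_1]$ and $[[g_3, g_1], g_1]$. If both vanish, we are done. Otherwise, since $|\gamma_3(G)| = p$ they are $k$-linearly dependent, so I can choose $\alpha, \beta \in k$, not both zero, so that $k^\ast := g_2^\alpha g_3^\beta$ satisfies $[[k^\ast, g_1], g_1] = 0$; the element $k^\ast$ lies in $K_G \setminus \Phi(G)$, since $g_2$ and $g_3$ are linearly independent modulo $\Phi(G)$. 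Combining $[[k^\ast, g_1], g_1] = 0$ with $[K_G, \gamma_2(G)] = 1$ and $[K_G, K_G] \subseteq \ZG(G)$ yields $[k^\ast, G] \subseteq \ZG(G)$, hence $k^\ast \in \ZG_2(G) = \Phi(G)$, contradicting $k^\ast \notin \Phi(G)$.

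The main obstacle is precisely this last step of case (2): the hypothesis $\ZG_2(G) = \Phi(G)$ is what rules out hypothetical groups in which $K_G$ centralizes $\gamma_2(G)$ yet $[[K_G, G], G]$ still fills $\gamma_3(G)$, so the argument really relies on $|G:\ZG_2(G)| = p^3$ and not only on the weaker $|\gamma_3(G)| = p$.
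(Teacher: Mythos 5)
Your proposal is correct, and case (1) coincides with the paper's argument; the difference lies in how you verify the hypothesis $[[K_G,G],G]\subseteq [K_G,\gamma_2(G)]$ of Theorem~\ref{th:K_G} in case (2). The paper argues globally: since $|\gamma_3(G)|=p$, the commutator pairing $G/\Cyc_G(\gamma_2(G))\times \gamma_2(G)/\Gamma(G)\rightarrow\gamma_3(G)$ is non-degenerate with $2$-dimensional right-hand factor, so $\Cyc_G(\gamma_2(G))$ has index $p^2$ and therefore cannot contain the maximal subgroup $K_G$; hence $[K_G,\gamma_2(G)]\neq 1$, forcing $[K_G,\gamma_2(G)]=\gamma_3(G)$, which makes the required inclusion automatic. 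You instead treat the hypothetical situation $[K_G,\gamma_2(G)]=1$ head-on and show elementwise, using $[K_G,K_G]\subseteq\Gamma(G)$, the triple-commutator reduction to $[[g_2,g_1],g_1]$ and $[[g_3,g_1],g_1]$, and the linearity of $x\mapsto [[x,g_1],g_1]$ on $K_G$, that some $k^\ast=g_2^\alpha g_3^\beta\notin\Phi(G)$ would satisfy $[k^\ast,G]\subseteq\ZG(G)$, contradicting $\ZG_2(G)=\Phi(G)$; your computations check out (in particular the reduction uses $[\gamma_2(G),\ZG_2(G)]=1$, which holds by the three subgroups lemma, and the homomorphism property of the triple commutator map does rely on the standing assumption $[K_G,\gamma_2(G)]=1$, as you use). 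Note that your contradiction does not actually need the assumption that one of the two triple commutators is nontrivial, so your argument in fact shows, like the paper but in disguised elementwise form, that $[K_G,\gamma_2(G)]=1$ is impossible under the hypotheses; the paper's centralizer-index count gets there in two lines, while your route is longer but makes explicit how the hypothesis $|G:\ZG_2(G)|=p^3$ enters, namely through the exclusion of non-Frattini elements of $\ZG_2(G)$.
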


\begin{proof}
We assume that $\gamma_3(G) \neq 1$ as a consequence of Section \ref{sec:invariants}(6). Then, by Lemma \ref{lemma:K_G}, the index $|\gamma_2(G):\Gamma(G)|$ is either $p^2$ or $p^3$.

If $|\gamma_2(G):\Gamma(G)|=p^3$, then Lemma~\ref{lemma:K_G} yields $K_G = Z_2(G)$ and therefore we have $[K_G, G] \subseteq Z(G)$ and $[[K_G,G],G,] = 1$. The claim follows from Theorem~\ref{th:K_G}.

Assume now that $|\gamma_3(G)| = p$ and, as a consequence of (1), that $|\gamma_2(G):\Gamma(G)|=p^2$. Then, by Lemma \ref{lemma:K_G}(4), the subgroup $K_G$ is maximal in $G$. Now the commutator map
\[G/\Cyc_G(\gamma_2(G)) \times \gamma_2(G)/\Gamma(G) \rightarrow \gamma_3(G)\]
is a non-degenerate alternating map and, as $ \gamma_2(G)/\Gamma(G)$ is $2$-dimensional, $G/\Cyc_G(\gamma_2(G))$ has dimension $2$. In particular, $\Cyc_G(\gamma_2(G))$ is different from $K_G$ and thus $[K_G,\gamma_2(G)] \neq 1$. It follows that $[K_G,\gamma_2(G)] = \gamma_3(G)$ and so we are done by Theorem~\ref{th:K_G}.
\end{proof}

\begin{corollary}\label{cor:p6}
Assume that $|G|=p^6$, $\gamma_2(G)^p\gamma_4(G)=1$, and $|G:\ZG_2(G)|=p^3$. Then the isomorphism type of $G$ is determined by the normalized units of the small group algebra of $kG$. 
\end{corollary}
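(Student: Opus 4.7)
The plan is to reduce the statement to Corollary~\ref{cor:gamma3p}. First, the hypothesis $|G:\ZG_2(G)|=p^3$ excludes class $\leq 2$ (in which case $\ZG_2(G)=G$), so combined with $\gamma_4(G)=1$, the group $G$ has class exactly $3$. The inclusions $\gamma_2(G)\subseteq\ZG_2(G)$ (from class $\leq 3$) and $G^p\subseteq\ZG(G)\subseteq\ZG_2(G)$ (from Figure~\ref{fig:subgroupStructure}, for $p$ odd) yield $\Phi(G)\subseteq\ZG_2(G)$ and therefore $d(G)=\log_p|G:\Phi(G)|\geq 3$. The key step is to upgrade this to equality, i.e.\ to show $\Phi(G)=\ZG_2(G)$, after which Corollary~\ref{cor:gamma3p} will apply.

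To this end, I would rule out $d(G)\geq 4$ case by case, using $|G|=p^6$. The cases $d(G)\in\{5,6\}$ force $|\gamma_2(G)|\leq p$ and hence class at most $2$, contradicting the above. The critical case is $d(G)=4$: here $|\Phi(G)|=p^2$, and class $3$ together with $\gamma_2(G)\subseteq\Phi(G)$ forces $\gamma_2(G)=\Phi(G)$ of order $p^2$, so $|\gamma_3(G)|=p$. Then, as in the proof of Lemma~\ref{lemma:K_G}, the commutator induces an $\F_p$-bilinear alternating form
\[
\omega\colon G/\Phi(G)\times G/\Phi(G)\longrightarrow \gamma_2(G)/\gamma_3(G),
\]
whose radical equals $\ZG_2(G)/\Phi(G)$, since $[x,G]\subseteq\gamma_3(G)$ if and only if $x\in\ZG_2(G)$. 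The domain of $\omega$ is $4$-dimensional, the radical is $1$-dimensional, and the target is $1$-dimensional; so $\omega$ would be a scalar-valued alternating form of rank $3$ on a $4$-dimensional $\F_p$-space. This contradicts the evenness of the rank of such forms, and the case $d(G)=4$ is excluded.

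Having established $\Phi(G)=\ZG_2(G)$, it remains to verify one of the two conditions of Corollary~\ref{cor:gamma3p}. Now Lemma~\ref{lemma:K_G} applies and yields $|\gamma_2(G):\Gamma(G)|\in\{p^2,p^3\}$. Since $\gamma_2(G)\subseteq\Phi(G)$ we have $|\gamma_2(G)|\leq p^3$, and the bound $|\gamma_2(G):\Gamma(G)|\geq p^2$ together with $\gamma_3(G)\subseteq\Gamma(G)$ and $\gamma_3(G)\neq 1$ rules out $|\gamma_2(G)|=p^2$, forcing $|\gamma_2(G)|=p^3$. If $|\gamma_2(G):\Gamma(G)|=p^3$, condition~(1) of Corollary~\ref{cor:gamma3p} is satisfied. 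Otherwise $|\Gamma(G)|=p$, which combined with $1\neq\gamma_3(G)\subseteq\Gamma(G)$ gives $|\gamma_3(G)|=p$, triggering condition~(2). The principal obstacle is the alternating-form argument excluding $d(G)=4$; the remainder is routine bookkeeping.
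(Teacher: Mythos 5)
Your argument covers only odd $p$, while the corollary as stated (and as proved in the paper) makes no such assumption: both ingredients you rely on, namely the inclusion $G^p\subseteq \ZG(G)$ from Figure~\ref{fig:subgroupStructure} and Corollary~\ref{cor:gamma3p}, are only available for $p$ odd. The paper disposes of $p=2$ with a one-line appeal to Wursthorn's computations \cite{Wur93}; without some such remark your proof leaves this case genuinely open, since your machinery cannot reach it. A second, much smaller point: in the case $|G:\Phi(G)|=p^4$ you identify the radical of $\omega$ with $\ZG_2(G)/\Phi(G)$ via ``$[x,G]\subseteq\gamma_3(G)$ if and only if $x\in\ZG_2(G)$''. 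The implication $x\in\ZG_2(G)\Rightarrow[x,G]\subseteq\gamma_3(G)$ needs $\ZG(G)\cap\gamma_2(G)=\gamma_3(G)$, which does hold here (otherwise $\gamma_2(G)\subseteq\ZG(G)$ would force class $2$), but it should be said: without this equality the radical could a priori be trivial, the rank would be $4$, and the parity contradiction would evaporate.

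For odd $p$ your route is correct and differs from the paper's in an instructive way, although both hinge on the even-rank property of alternating forms. The paper applies the parity argument directly to the non-degenerate commutator form on the $3$-dimensional space $G/\ZG_2(G)$ with values in $\ZG(G)\gamma_2(G)/\ZG(G)$; this pins down $\ZG(G)=\gamma_3(G)$ of order $p$ and yields the stronger conclusion $\gamma_2(G)=\ZG_2(G)=\Phi(G)$, so it always lands in condition $(2)$ of Corollary~\ref{cor:gamma3p}. You instead only prove the equality $\Phi(G)=\ZG_2(G)$, excluding the case $|G:\Phi(G)|=p^4$ by a rank-$3$ alternating form on $G/\Phi(G)$, and then invoke the dichotomy $|\gamma_2(G):\Gamma(G)|\in\{p^2,p^3\}$ from Lemma~\ref{lemma:K_G} to trigger condition $(1)$ or $(2)$. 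Your version extracts less structural information about $G$ (no identification of $\ZG(G)$ or of $|\gamma_3(G)|$) but establishes exactly what Corollary~\ref{cor:gamma3p} needs; the paper's version is shorter and avoids the case distinction on the minimal number of generators.
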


\begin{proof}
The work in \cite{Wur93} covers the case $p=2$ and Section \ref{sec:invariants}(6) covers the case $\gamma_3(G)=1$. We assume that $p$ is odd and that  $\gamma_3(G) \neq 1 $. We show that $\Phi(G)=\ZG_2(G)$. The inclusions $\gamma_2(G)\subseteq\Phi(G)\subseteq\ZG_2(G)$ are clear, so we show that $\ZG_2(G)=\gamma_2(G)$. The commutator map induces a non-degenerate map
\[
G/\ZG_2(G)\times G/\ZG_2(G)\rightarrow \ZG(G)\gamma_2(G)/\ZG(G)
\] 
and so, the dimension of $G/\ZG_2(G)$ being odd, we have that $p<|\ZG(G)\gamma_2(G):\ZG(G)|\leq p^3$. Moreover, the class of $G$ being $3$, we have that $|\gamma_3(G)|\geq p$ and $\gamma_2(G)$ is not central. We derive that $\ZG(G)=\gamma_3(G)$ and $\gamma_3(G)$ has order $p$. In particular, $\gamma_2(G)$ contains $\ZG(G)$ and $\gamma_2(G)=\ZG_2(G)=\Phi(G)$. We conclude thanks to Corollary~\ref{cor:gamma3p}.
\end{proof}

\noindent
We remark that, under the conditions that $G$ has class $3$, order $p^6$, and $\gamma_2(G)$ is elementary abelian, the only possibilities for the index of $\ZG_2(G)$ in $G$ are $p^2$ and $p^3$. The latter case is settled by Corollary \ref{cor:p6} while the case $|G:\ZG_2(G)|=p^2$ cannot be settled within the small group algebra, as Example \ref{ex:553vs554} shows.

\begin{remark}[(Isoclinism classes.)]\label{rmk:isoclinism}
We here shortly discuss the influence  of our results on the (MIP) for groups of order $p^6$ in terms of isoclinism classes. Following P.\ Hall's definition \cite[Definition 4.28]{Suz86}, two groups $X$ and $Y$ are \emph{isoclinic} if there exist isomorphisms 
\[
\varphi:X/\ZG(X)\rightarrow Y/\ZG(Y) \textup{ and } \psi: \gamma_2(X)\rightarrow \gamma_2(Y) 
\]
commuting with the commutator map, in other words such that, for each $x,x'\in X$, one has \
$$\psi([x,x'])=[\varphi(x)\ZG(X),\varphi(x')\ZG(X)].$$
We rely on the classification, given in \cite{Jam80}, of the 43 isoclinism classes $\Phi_1,\ldots,\Phi_{43}$ of groups of order at most $p^6$ for $p$ odd. 
We note that the isomorphism types of the proper members of the lower central series of a $p$-group are invariants of its isoclinism classes \cite[(4.30)]{Suz86}. In particular, Sandling's condition $\gamma_2(G)^p\gamma_3(G)=1$ is invariant under isoclinism and covers the classes $\Phi_2,\Phi_4,\Phi_5,\Phi_{11}, \Phi_{12}, \Phi_{13},\Phi_{15}$. The class $\Phi_1$ corresponds to the abelian groups and was already covered by \cite{Des56}.
The class $\Phi_{14}$ is covered by \cite{BdR20}, while the class $\Phi_{35}$ is covered by \cite{BC88}. To the best of our knowledge, no other class is fully covered by existing results. Our Theorem \ref{th:MaxAbelianCentDerSubg} yields the new classes $\Phi_3$ and $\Phi_{16}$, while Corollary \ref{cor:p6} yields the new classes $\Phi_{31},\Phi_{32}, \Phi_{33}$. 
\end{remark}

\begin{corollary}\label{cor:p7}
Assume that $|G|=p^7$, $\gamma_2(G)^p\gamma_4(G)=1$, and $|G:\ZG_2(G)|=|G:\Phi(G)|=p^3$. The isomorphism type of $G$ is determined by the normalized units of the small group algebra of $kG$ if one of the following holds:
\begin{enumerate}[label=$(\arabic*)$]
\item $|\gamma_3(G)|=p$,
\item if $G$ has class $3$, then $[K_G,\gamma_2(G)]=\gamma_3(G)$. 
\end{enumerate}
\end{corollary}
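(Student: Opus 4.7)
The corollary is designed so that its two hypotheses feed directly into Corollary~\ref{cor:gamma3p} and Theorem~\ref{th:K_G} respectively; the plan is to verify the missing ingredient in each case and then invoke the relevant result. As a preliminary reduction, if $G$ has class at most $2$ then Sandling's invariant from Section~\ref{sec:invariants}(6) already determines $G$, so I assume from now on that $G$ has class exactly $3$. Under the standing hypotheses, the chain $\gamma_2(G)\subseteq\Phi(G)\subseteq\ZG_2(G)$ combined with $|G:\Phi(G)|=|G:\ZG_2(G)|=p^3$ forces $\Phi(G)=\ZG_2(G)$, placing us in the situation considered by both Corollary~\ref{cor:gamma3p} and Theorem~\ref{th:K_G}.

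Under condition~$(1)$, the equality $|\gamma_3(G)|=p$, together with the remaining hypotheses of Corollary~\ref{cor:p7} and the (implicit, following the earlier results of this section) assumption that $p$ is odd, form exactly the hypotheses of Corollary~\ref{cor:gamma3p}(2). Invoking that corollary delivers the conclusion at once.

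Under condition~$(2)$, I verify the commutator containment $[[K_G,G],G]\subseteq [K_G,\gamma_2(G)]$ required by Theorem~\ref{th:K_G}. Since $K_G\leq G$ we have $[K_G,G]\subseteq \gamma_2(G)$, so
\[
[[K_G,G],G]\;\subseteq\;[\gamma_2(G),G]\;=\;\gamma_3(G)\;=\;[K_G,\gamma_2(G)],
\]
the last equality being precisely the hypothesis of condition~$(2)$. Theorem~\ref{th:K_G} then completes the proof. The only obstacle — if one can call it that — is recognising that condition~$(2)$ has been phrased precisely so as to supply this containment in the simplest possible form; beyond that, the argument is pure bookkeeping.
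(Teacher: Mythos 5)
Your treatment of the two conditions is exactly the paper's argument: after reducing to class $3$ via Sandling's invariant (Section~\ref{sec:invariants}(6)), condition~$(1)$ is fed into Corollary~\ref{cor:gamma3p}, and condition~$(2)$ is converted into the hypothesis of Theorem~\ref{th:K_G} through the same one-line containment $[[K_G,G],G]\subseteq[\gamma_2(G),G]=\gamma_3(G)=[K_G,\gamma_2(G)]$. That part is correct and needs no change.

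The one genuine gap is your handling of the prime $2$. The statement of the corollary carries no parity restriction, whereas both results you invoke (Corollary~\ref{cor:gamma3p} and Theorem~\ref{th:K_G}) explicitly assume $p$ odd, so the hypothesis ``$p$ odd'' is \emph{not} implicit and cannot simply be declared so; as written, your argument proves nothing for $p=2$. The paper closes this case at the start of its proof by appealing to Wursthorn's computational verification for groups of order $2^7$ (the reference \cite{BKRW99}), exactly as Corollary~\ref{cor:p6} appeals to \cite{Wur93} for order $2^6$. So either add that citation to dispose of $p=2$, or restrict your claim to odd $p$; without one of these the proof does not cover the statement as formulated. (Your side remark that $\Phi(G)=\ZG_2(G)$ also tacitly uses $G^p\subseteq\ZG(G)$, which again needs $p$ odd, but this is harmless since the equality is established inside Lemma~\ref{lemma:K_G} and Theorem~\ref{th:K_G} anyway.)
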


\begin{proof}
If $p=2$, then we are done thanks to work of Wursthorn \cite{BKRW99}; assume thus that $p$ is odd. 
If $\gamma_3(G) = 1$, then Section \ref{sec:invariants}(6) yields the claim. Assume that $\gamma_3(G)\neq 1$. If $|\gamma_3(G)| = p$, then we are done by Corollary~\ref{cor:gamma3p}. To conclude, assume that $[K_G,\gamma_2(G)] = \gamma_3(G)$; then $[K_G,G,G] \subseteq [K_G,\gamma_2(G)]$ and so we are done by Theorem~\ref{th:K_G}.
\end{proof}

\noindent
The groups of order $p^7$ are classified, cf.\ \cite{JNOB90, OVL05}, however to the best of our knowledge there is no documentation available that allows us to give a precise count in this case of groups covered by our work as we do in Remark \ref{rmk:isoclinism}. Nonetheless, if for a fixed class $c$, we expect all primes larger than $c$ to behave ``in a somewhat analogous way for what concerns group counting'', cf.\ \cite[\S~1]{OVL05}, then we can hope to learn something about the general scenario for groups of order $p^7$ from the groups of order $7^7$. The following remark collects computational evidence for $p=5,7$ in relation to this section's results.  

\begin{remark}[(Computational evidence for $p=5,7$.)]
Denote by $f(p,n)$ the number of isomorphism classes of groups of order $p^n$ and by $\tilde{f}(p,n)$ the number of such classes in which a representative has class $3$ and derived subgroup of exponent $p$. 
Let, moreover, $g(p,n)$ denote the number of isomorphism classes of groups of order $p^n$ which satisfy the assumptions of Theorems \ref{th:MaxAbelianCentDerSubg} or \ref{th:K_G}. Set $$\mathrm{perc}(p,n)=(\tilde{f}(p,n)/f(p,n),g(p,n)/\tilde{f}(p,n),g(p,n)/f(p,n)).$$ 
The following table collects infomation on these numbers for $p=5,7$ and $n=6,7$.
\begin{center}
\begin{tabular}{|c|c|c|c|c|c|c|c|c|} 
\hline
$p$ & $f(p,6)$ & $\tilde{f}(p,6)$ & $g(p,6)$ & $\mathrm{perc}(p,6)$ & $f(p,7)$ & $\tilde{f}(p,7)$ & $g(p,7)$ & $\mathrm{perc}(p,7)$ \\ 
\hline
$5$ & $684$ & $373$ & $68$ & $(0.545,0.182,0.099)$ & $34297$  & $22436$ & $13459$ & $(0.654,0.6,0.392)$ \\ 
\hline
$7$ & $860$ & $493$ & $72$ & $(0.573,0.146,0.084)$ & $113147$ & $75948$ & $55429$ & $(0.671,0.73,0.49)$ \\ 
\hline
\end{tabular}
\end{center}
Assume $p=5$: we expand on the content of the last table.
Up to isomorphism, there are 684 groups of order $p^6$, of which 373 have class $3$ and elementary abelian derived subgroup. Under the last assumptions, there are $50$ groups $G$ with $\Cyc_G(\gamma_2(G))$ maximal and abelian (and so covered by Theorem \ref{th:MaxAbelianCentDerSubg}) and 18 groups with second centre of index $p^3$ (covered by Corollary \ref{cor:p6}).
On the other hand, there are 34297 groups of order $p^7$, of which  22436 have class $3$ and elementary abelian derived subgroup. Within this last class, 170 groups are covered by Theorem \ref{th:MaxAbelianCentDerSubg} and 13289 groups are given by Corollary \ref{cor:p7}, contributing to a total of 13459 groups. The interpretation for $p=7$ is analogous.
\end{remark}

\noindent
The last result of this section is the following example of groups of order $5^6$ with isomorphic small group algebras. Example \ref{ex:553vs554} demonstrates that it is not possible to apply the same strategy adopted by Salim and Sandling for the groups of order $p^5$ \cite{SS95, SS96p5, SS96MC} to the groups of order $p^6$, though some of the techniques  from this section resemble theirs; see for Example \cite[Lemma 6.1]{Sal93}. We recall indeed that the work of Salim and Sandling builds upon three main subcases: groups that are determined by known group theoretical invariants, groups of maximal class \cite{SS96MC}, and the remaining groups, having class at most $3$ and elementary abelian derived subgroup, are shown to be determined by the  unit group of the small group algebra, cf.\ \cite{SS96p5} and \cite[Chapter~6]{Sal93}.


\begin{example}\label{ex:553vs554}
Assume that $G = \SG(5^6, 553)$ and let $H =\SG(5^6, 554)$. It can be checked using GAP that $G$ and $H$ share all known group theoretical invariants. We will show that the unit groups of the small group algebras of $kG$ and $kH$ are isomorphic. We will do so by displaying a normal subgroup of $S=V(kG/I(kG)I(k\gamma_2(G)))$ satisfying the same relations as $H$. It will be clear from the presentations of $G$ and $H$ that both groups are $4$-generated of class $3$ and satisfy $\gamma_2(G)^p\gamma_4(G)=\gamma_2(H)^p\gamma_4(H)=1$; it thus makes sense to work at the level of small group algebras.

The SmallGroup Library in GAP provides the following polycyclic presentations of the two groups in question, where trivial commutators are omitted:
\begin{align*}
G = \langle g_1, g_2, g_3, g_4, g_5, g_6 \mid\ & g_1^5 = g_2^{25} = g_3^{25} = g_4^5 = g_5^5 = g_6^5= 1, \\ & [g_2,g_1] = g_6, [g_4, g_3] = g_5, [g_5, g_4] = g_6, g_2^5 = g_3^5 = g_6^{-1}\rangle;\\
H = \langle h_1, h_2, h_3, h_4, h_5,h_6 \mid\ & h_1^5 = h_2^{25} = h_3^{25} = h_4^5 = h_5^5 = h_6^5 = 1, \\ & [h_2,h_1] = h_6, [h_4, h_3] = h_5, [h_5, h_4] = h_6, h_2^5 = h_3^5 = h_6^{-2}\rangle.
\end{align*}
Since our methods are more suitable for controlling differences between commutators rather than between $p$-th powers, we translate the last presentations to
\begin{align}\label{eq:56554}
\nonumber
G = \langle g_1, g_2, g_3, g_4, g_5, g_6 \mid\ & g_1^5 = g_2^{25} = g_3^{25} = g_4^5 = g_5^5 = g_6^5 = 1,\\ \nonumber & [g_2,g_1] = g_6^{-1}, [g_4, g_3] = g_5, [g_5, g_4] = g_6^{-1}, g_2^5 = g_3^5 = g_6 \rangle; \\ 
H = \langle h_1, h_2, h_3, h_4, h_5, h_6 \mid \ & h_1^5 = h_2^{25} = h_3^{25} = h_4^5 = h_5^5 = h_6^5 = 1,\\ \nonumber & [h_2,h_1] = h_6^2, [h_4, h_3] = h_5, [h_5, h_4] = h_6^2, h_2^5 = h_3^5 = h_6\rangle. 
\end{align}
We note that $A=A(G|g_1,\ldots,g_4)/(A\cap Z(S))$ is generated by the elements $a = 1 + \bar{g_3}\bar{g_4}$ and $b = 1 + \bar{g_4}^2$. Moreover, one can compute that $\Cyc_S(b) = \langle A, \gamma_2(G),  g_1, g_2, g_4 \rangle$ and $[g_3,b] = [g_3,g_4,g_4] = [g_5^{-1},g_4] = g_6$. Define now
\[\tilde{h}_1 = g_1^{-2}b^{-2}, \quad \tilde{h}_2 = g_2, \quad \tilde{h}_3 = g_2^{-2} g_3^{-2}, \quad \tilde{h}_4 = g_4 \]
and set  $\tilde{H} = \langle \tilde{h}_1, \tilde{h}_2, \tilde{h}_3, \tilde{h}_4 \rangle$. We claim that $\tilde{H} \cap A = 1$ and $\tilde{H} \cong H$.
To this end, define 
\[\tilde{h}_5 = [\tilde{h}_4,\tilde{h}_3] = [g_4,g_2^{-2}g_3^{-2}] = [g_4,g_3^{-2}] = g_5^{-2} \text{ and } \tilde{h}_6 =\tilde{h}_2^5 = g_6. \]
We will show that the relations of $\tilde{H}$ with respect to the  generators $\tilde{h} _1,\ldots,\tilde{h}_6$ are exactly those from $H$'s presentation in \eqref{eq:56554}.
It is not difficult to show that
\[\tilde{h}_1^5 =\tilde{h}_2^{25} =\tilde{h}_3^{25} =\tilde{h}_4^5 = 1 \text{ and } \tilde{h}_3^5 = (g_2^{-2}g_3^{-2})^5 = g_6^{-4} = g_6 = \tilde{h}_6,\]
settling the $p$-th powers. Moreover, we compute
\begin{align*}
[\tilde{h}_2, \tilde{h}_1] &= [g_2, g_1^{-2}b^{-2}] = [g_2, g_1]^{-2} = g_6^2 = \tilde{h}_6^2, \\ 
[\tilde{h}_3, \tilde{h}_1] &= [g_2^{-2}g_3^{-2}, g_1^{-2} b^{-2}] = [g_2^{-2},g_1^{-2}][g_3^{-2}, b^{-2}] = g_6 g_6^{-1} = 1, \\
[\tilde{h}_4, \tilde{h}_1] &= [g_4, g_1^{-2}b^{-2}] = 1,  \
[\tilde{h}_3, \tilde{h}_2] = [g_2^{-2}g_3^{-2}, g_2] = 1, \
 [\tilde{h}_4, \tilde{h}_2] = [g_4,g_2] = 1, \\
 [\tilde{h}_5, \tilde{h}_1] &= [\tilde{h}_5, \tilde{h}_2] = [\tilde{h}_5, \tilde{h}_3] = 1, \
[\tilde{h}_5, \tilde{h}_4] = [g_5^{-2}, g_4] = [g_5,g_4]^{-2} = g_6^2 = \tilde{h}_6^2. 
\end{align*} 
Since $\tilde{h}_6$ is clearly central in $\tilde{H}$, all the relations from \eqref{eq:56554} hold. This shows that $\tilde{H}$ is isomorphic to $H$, implying that the unit groups of the small group algebras of $kG$ and $kH$ are isomorphic. Having a closer look on the elements of $\tilde{H}$ we see that they are also linearly independent in $kG/I(kG)I(k\gamma_2(G))$, as so are the elements of $G$. So $\tilde{H}$ is a group basis of $kG/I(kG)I(k\gamma_2(G))$. Hence the small group algebras of $G$ and $H$ are also isomorphic. 
\end{example}

\noindent
We close the present section with an account of the history of pairs of groups that are not told apart by the small group algebra. 
To the best of our knowledge, the first example appearing in the literature dates back to \cite[Section 4]{Bag99} and concerns groups of order at least $p^5$, of maximal class with an elementary abelian maximal subgroup (in particular the forth element of the lower central series is, in this case, non-trivial). In \cite{HS07}, the authors display groups of order $16$ and $32$ with the same relative small group algebra: for all such groups the derived subgroup is cyclic of order $4$, so the considered small group algebra is different from the one from this paper. We also wish to mention two examples that are mentioned in the literature, but were never published. The first one, attributed to Kimmerle and Scott, figures in \cite[Footnote 6.e after 6.25 from Chapter 6]{San84} and \cite{HS07}. The second one, attributed to Wursthorn, 
concerns four groups of order $5^5$ and maximal class that have all isomorphic small group algebras and is referred to in
 \cite[page 1073]{SS96MC}.

\section{Lower central series invariants}\label{sec:2genClass3}
In \cite{BdR20}, the (MIP) is solved for $2$-generated $p$-groups of class $2$, while in \cite[Proposition 2.5]{MM20} it is solved for $2$-generated groups of class $3$ in which the derived subgroup is elementary abelian. In this section, we consider $2$-generated groups of class $3$, but drop the assumption that the derived subgroup is elementary abelian.
The main result of this section is the following.

\begin{theorem}\label{th:2gen-class3-orderG3}
Assume that $p$ is odd and $H$ is a group such that $kG \cong kH$. Assume moreover that $G$ is $2$-generated with $\gamma_3(G)$ central of exponent $p$. Then $H$ has the same class as $G$ and, for each integer $i\geq 2$, one has $\gamma_i(G) \cong \gamma_i(H)$.
\end{theorem}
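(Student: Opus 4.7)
First I would reduce to the case where $G$ has class exactly $3$: if $\gamma_3(G) = 1$, then $G$ has class at most $2$, and the conclusion follows from the positive solution of the \emph{(MIP)} for $2$-generated $p$-groups of class $\leq 2$ in \cite{BdR20}. So assume $\gamma_3(G) \neq 1$. A direct observation is that $\gamma_2(G)$ is abelian: since $G = \langle x_1, x_2 \rangle$, the quotient $\gamma_2(G)/\gamma_3(G)$ is cyclic generated by the image of $c = [x_1,x_2]$, so $\gamma_2(G) = \langle c \rangle\gamma_3(G)$, and the centrality of $\gamma_3(G)$ makes this product abelian. Invariant~(4) of Section~\ref{sec:invariants} then forces $\gamma_2(G) \cong \gamma_2(H)$; in particular $\gamma_2(H)$ is abelian.

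Next, I would pin down the class of $H$. A standard commutator identity using $\gamma_4(G)=1$ and $\gamma_3(G)^p=1$ yields $[x_i,c^p] = [x_i,c]^p = 1$, so $c^p \in \ZG(G)$ and $\Gamma(G) = \gamma_2(G)^p\gamma_3(G)$; because $c \notin \ZG(G)$ in class $3$, we have $\Gamma(G) \subsetneq \gamma_2(G)$. If $H$ had class $\leq 2$, then $\Gamma(H) = \gamma_2(H)$, so $|\Gamma(H)| = |\gamma_2(H)| = |\gamma_2(G)| > |\Gamma(G)|$, contradicting invariant~(5); hence $H$ has class at least $3$. Invariant~(9) for $2$-generated groups gives $G/\gamma_2(G)^p \cong H/\gamma_2(H)^p\gamma_4(H)$, and comparing orders with $\gamma_2(G) \cong \gamma_2(H)$ forces $\gamma_4(H) \subseteq \gamma_2(H)^p$.

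The heart of the argument is then to recover the isomorphism type of $\gamma_3(H)$ and simultaneously to upgrade the last inclusion to $\gamma_4(H)=1$; this is where the new quotient-in-quotient embedding technique must enter. Since $\gamma_3(G)$ is central of exponent $p$ inside the cyclic-by-elementary-abelian group $\gamma_2(G)$, my plan is to construct a canonical sub-quotient of $kG$ from standard canonical data -- the center $\ZG(kG)$, the Zassenhaus ideals $H_n(kG)$ from Section~\ref{sec:Ideals}, and the ideal $I(kG)I(k\gamma_2(G))$ -- within which one can intrinsically read off the rank and exponent of $\gamma_3(G)$. Applying the isomorphism $kG \cong kH$ then produces the corresponding datum on the $H$-side, which by the same analysis must correspond to $\gamma_3(H)$, forcing $\gamma_3(H) \cong \gamma_3(G)$ and $\gamma_3(H) \subseteq \ZG(H)$. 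Combined with $\gamma_4(H) \subseteq \gamma_2(H)^p$ and the explicit structure of $\gamma_2(H)$ as an abelian group, this will yield $\gamma_4(H) = 1$, completing the proof.

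The main obstacle, already flagged in the introduction via Example~2.1 of \cite{BK19}, is that $\gamma_3(G)$ is \emph{not} canonically visible in $kG$, so the canonical sub-quotient in which one reads off the $\gamma_3$-data must be chosen very carefully and its group-theoretic meaning justified by indirect means. The $2$-generation hypothesis is crucial here: it forces $\gamma_2(G)/\gamma_3(G)$ to be cyclic, which collapses the would-be ambiguity in pinning down $\gamma_3$ within the canonical sub-quotient and explains why the theorem is stated only in the $2$-generated setting.
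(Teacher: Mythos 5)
Your preliminary reductions are sound and essentially parallel the paper's Proposition~\ref{prop:invs}: the reduction to class $3$ via \cite{BdR20}, the observation that $\gamma_2(G)=\gen{c}\gamma_3(G)$ is abelian so that invariant (4) gives $\gamma_2(G)\cong\gamma_2(H)$, and the exclusion of class $\leq 2$ for $H$ via $\Gamma(G)\subsetneq\gamma_2(G)$ and invariant (5) (the paper instead invokes invariant (8), but your variant works). However, the heart of the theorem is exactly the step you leave as a plan: you assert that one can ``construct a canonical sub-quotient of $kG$ from $\ZG(kG)$, the Zassenhaus ideals and $kGI(k\gamma_2(G))$ within which one can intrinsically read off the rank and exponent of $\gamma_3(G)$,'' but you give no such construction and no argument that it would have the claimed group-theoretic meaning. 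This is not a routine verification: as the paper stresses (see the example with $\SG(3^7,19)$ and $\SG(3^7,43)$ right after the theorem), $|\gamma_3(G)|$ is not a known invariant and cannot be derived from the known ones, so the entire content of the theorem sits precisely in this missing step.

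Moreover, the paper's actual argument suggests that your proposed route may not even be realizable as stated: no canonical subquotient detecting $\gamma_3$ is produced there. Instead, after Lemma~\ref{lemma:J_X} pins down $|\gamma_3(G)|,|\gamma_3(H)|\in\{p,p^2\}$, one assumes for contradiction $|\gamma_3(G)|=p<p^2=|\gamma_3(H)|$, builds highly non-canonical ideals $\mathcal{K}\subseteq\mathcal{L}$ (depending on weights of elements such as $c_G^{p^t}d_G^m$, on a chosen complement $L$ inside $\mu_p^0(Z)$, and on carefully chosen Jennings tuples as in Lemma~\ref{lemma:JenningsBasisChoice}), normalizes generators so that $\overline{d_H}\in\mathcal{L}$ (Lemmas~\ref{lemma:beta} and~\ref{lem:Putd_HIntoL}), and then shows via Lemma~\ref{lem:Centralizerx} and a lengthy induction on Jennings-basis support elements that the central element $\varphi_L(\overline{c_H}+x)$ of $[\A,\A]\A\cap\ZG(\A)$ cannot exist. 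Your proposal contains none of this machinery, nor any substitute for it; in addition, the facts that $\gamma_3(H)$ is elementary abelian, central, and that $\gamma_2(H)/\gamma_3(H)$ is cyclic (needed even to compare $\gamma_3(G)$ with $\gamma_3(H)$ once their orders agree) are also deferred to the same unproven step. As it stands, the proposal is an outline of the easy part plus a statement of intent for the hard part, so it does not constitute a proof. (A small remark: once $\gamma_3(H)\subseteq\ZG(H)$ is known, $\gamma_4(H)=1$ is immediate, so the detour through $\gamma_4(H)\subseteq\gamma_2(H)^p$ is superfluous.)
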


\noindent
Under the assumptions of Theorem \ref{th:2gen-class3-orderG3}, proving that $\gamma_3(G) \cong \gamma_3(H)$ turns out to be the most challenging task. Indeed, the number $|\gamma_3(G)|$ is not a known group theoretical invariant and cannot be derived from known group theoretical invariants either, as the following example shows.

\begin{example}
For $G = \SG(3^7,19)$ and $H = \SG(3^7,43)$, a quick check using the software of \cite{MM20} shows that $G$ and $H$ share all known group theoretical invariants, but do not satisfy $|\gamma_3(G)|=|\gamma_3(H)|$.
\end{example}

\begin{lemma}\label{lemma:structure1}
Assume that $G$ has class $3$, $\gamma_2(G)/\gamma_3(G)$ is cyclic, and $\gamma_3(G)$ is elementary abelian. Then $\Gamma(G)=\gamma_2(G)^p\gamma_3(G)$ and $\Gamma(G)$ is maximal in $\gamma_2(G)$.
\end{lemma}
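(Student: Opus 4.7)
The plan is to reduce everything to a single generator of $\gamma_2(G)$ modulo $\gamma_3(G)$ and then play off the hypotheses on $\gamma_3(G)$ (central, elementary abelian) against one another. First I would fix an element $c \in \gamma_2(G)$ whose image generates the cyclic group $\gamma_2(G)/\gamma_3(G)$ and record two preliminary observations: since $G$ has class $3$, we have $\gamma_3(G) \subseteq \ZG(G)\cap\gamma_2(G) = \Gamma(G)$, and moreover $c \notin \ZG(G)$, for if $[c,G]=1$ then $[\gamma_2(G),G] = [\gen{c}\gamma_3(G),G] = 1$, contradicting $\gamma_3(G)\neq 1$. In particular $\gamma_2(G)/\gamma_3(G)$ is a non-trivial cyclic $p$-group.

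The technical core is the identity $[c^n,g] = [c,g]^n$ for every $g\in G$ and every $n\in\Z$. Since $[c,g] \in \gamma_3(G) \subseteq \ZG(G)$, an induction on $n$ using Lemma~\ref{lemma:comm-formulas} in the form $[c^n,g] = [c,g]^{c^{n-1}}[c^{n-1},g] = [c,g]\,[c^{n-1},g]$ does the job. Specialising to $n=p$ and using that $\gamma_3(G)$ has exponent $p$ gives $[c^p,g]=1$ for every $g\in G$, so $c^p \in \Gamma(G)$. Because any $x \in \gamma_2(G)$ can be written as $c^j z$ with $z\in\gamma_3(G)$ central of exponent $p$, one has $x^p = c^{jp} z^p = (c^p)^j \in \Gamma(G)$, whence $\gamma_2(G)^p\gamma_3(G) \subseteq \Gamma(G)$.

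It remains to compare indices inside $\gamma_2(G)$. On the one side, $\gamma_2(G)/(\gamma_2(G)^p\gamma_3(G))$ is the $p$-th-power cokernel of the non-trivial cyclic $p$-group $\gamma_2(G)/\gamma_3(G)$, hence has order exactly $p$. On the other side, $c\Gamma(G)$ generates $\gamma_2(G)/\Gamma(G)$ and has order a divisor of $p$ (because $c^p\in\Gamma(G)$) that is strictly greater than $1$ (because $c\notin\Gamma(G)$); so $|\gamma_2(G):\Gamma(G)|=p$ as well. Since $\gamma_2(G)^p\gamma_3(G) \subseteq \Gamma(G)$ and the two subgroups have the same index in $\gamma_2(G)$, they coincide, and $\Gamma(G)$ is maximal in $\gamma_2(G)$. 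The only point requiring mild care is making sure $\gamma_2(G)/\gamma_3(G)$ is genuinely non-trivial (so that the $p$-th-power quotient really has order $p$), but this is ensured by the class of $G$ being exactly $3$, as noted above.
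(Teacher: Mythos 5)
Your proposal is correct and follows essentially the same route as the paper: the identity $[c^n,g]=[c,g]^n$ you prove by induction is exactly the bilinearity of the commutator map $G\times\gamma_2(G)\rightarrow\gamma_3(G)$ that the paper invokes to get $\gamma_2(G)^p\subseteq\Gamma(G)$, and the cyclicity of $\gamma_2(G)/\gamma_3(G)$ then forces the two subgroups of index $p$ to coincide. Your version merely spells out, via the generator $c$ and the observation $c\notin\ZG(G)$, the index comparison that the paper leaves implicit.
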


\begin{proof}
The subgroup $\gamma_3(G)$ is contained in $\Gamma(G)$ and the commutator map $G\times \gamma_2(G)\rightarrow \gamma_3(G)$ is bilinear.  It follows that $\gamma_2(G)^p$ is contained in $\Gamma(G)$ and so, $\gamma_2(G)/\gamma_3(G)$ being cyclic, we get that $\Gamma(G)=\gamma_2(G)^p\gamma_3(G)$ has index $p$ in $G$.
\end{proof}

\begin{proposition}\label{prop:invs}
Assume that $G$ has class $3$, $\gamma_2(G)/\gamma_3(G)$ is cyclic, and $\gamma_3(G)$ is elementary abelian. Let, moreover, $H$ be a group such that $kG\cong kH$. Then $H$ has class $3$, the quotient $\gamma_2(H)/\gamma_3(H)$ is cyclic, $\gamma_2(G) \cong \gamma_2(H)$, and $\gamma_3(H)$ is elementary abelian. 
\end{proposition}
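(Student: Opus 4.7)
\noindent
The plan is to prove each of the four assertions using invariants from Section~\ref{sec:invariants}. Since $G$ has class $3$, we have $[\ls{2}{G},\ls{2}{G}]\subseteq\ls{4}{G}=1$, so $\ls{2}{G}$ is abelian; combined with the hypotheses that $\ls{2}{G}/\ls{3}{G}$ is cyclic and $\ls{3}{G}$ is elementary abelian, this forces $\ls{2}{G}\cong C_{p^m}\times (C_p)^s$ for some $m\geq 1$, $s\geq 0$. Invariant~(4) of Section~\ref{sec:invariants} transfers this isomorphism type: $\ls{2}{G}\cong\ls{2}{H}$, establishing the third assertion and showing that $\ls{2}{H}$ is abelian with the same structure.

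For the cyclicity of $\ls{2}{H}/\ls{3}{H}$, I combine the Sandling invariant~(6), namely $G/\ls{2}{G}^p\ls{3}{G}\cong H/\ls{2}{H}^p\ls{3}{H}$, with Lemma~\ref{lemma:structure1}, which states that $\Gamma(G)=\ls{2}{G}^p\ls{3}{G}$ is maximal in $\ls{2}{G}$. Counting orders forces $\ls{2}{H}^p\ls{3}{H}$ to be maximal in $\ls{2}{H}$; since $\ls{2}{H}^p=\Phi(\ls{2}{H})$ in the abelian group $\ls{2}{H}$, the quotient $\ls{2}{H}/\ls{3}{H}$ has Frattini quotient of order $p$ and is hence cyclic by Burnside's basis theorem.

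For $\ls{3}{H}$ elementary abelian, I argue via the $\Gamma$-invariant~(5). The $m=1$ case is immediate, as $\ls{2}{H}$ itself is then elementary abelian. For $m\geq 2$, the exponent of $\Gamma(G)=\ls{2}{G}^p\ls{3}{G}$ equals $p^{m-1}$; transferring via $\Gamma(G)\cong\Gamma(H)$, we obtain $\Gamma(H)\subseteq \Omega_{m-1}(\ls{2}{H})=\ls{2}{H}^p\cdot\Omega_1(\ls{2}{H})$. A count of orders $|\Gamma(H)|=p^{m-1+s}=|\Omega_{m-1}(\ls{2}{H})|$ yields equality, so $\ls{2}{H}^p\subseteq\Gamma(H)\subseteq\ZG(H)$. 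Since $\ls{2}{H}$ is abelian, the relation $\ls{3}{H}^p=[\ls{2}{H}^p,H]$ holds, and the right-hand side is trivial, so $\ls{3}{H}$ is elementary abelian.

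The class of $H$ is handled last. The lower bound $\mathrm{class}(H)\geq 3$ follows directly: if $H$ were of class at most $2$, then $\Gamma(H)=\ls{2}{H}$, contradicting $|\Gamma(H)|=|\Gamma(G)|=|\ls{2}{H}|/p$. For $\mathrm{class}(H)\leq 3$, invariant~(9) (applicable since $G$ is $2$-generated with $\ls{4}{G}=1$) yields $\ls{4}{H}\subseteq\ls{2}{H}^p$, a cyclic group; and as $\ls{3}{H}$ is now known to be elementary abelian, every generator $[z,h]$ of $\ls{4}{H}$ satisfies $[z,h]^p=[z^p,h]=1$, forcing $\ls{4}{H}\subseteq\Omega_1(\ls{2}{H}^p)\cong C_p$. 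Ruling out this residual $C_p$ is the main obstacle: I plan to extract it from the restricted Lie algebra isomorphism $\Jen(G)\cong\Jen(H)$ of invariant~(7), combined with the metabelian identity $[a,b,c]=[a,c,b]$ for $a\in\ls{2}{H}$ and $b,c\in H$; together these should transfer the vanishing of the triple commutators $[[\overline{c},\overline{x}],\overline{x}]$ from $\Jen_4(G)$ (where they vanish because $\ls{4}{G}=1$) to $\Jen_4(H)$, forcing $\ls{4}{H}=1$.
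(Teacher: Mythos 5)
Your argument for the cyclicity of $\gamma_2(H)/\gamma_3(H)$ via the Sandling quotient (6) and Lemma~\ref{lemma:structure1} is correct, as is your order-count showing the class of $H$ is at least $3$; but two steps have genuine gaps. The first is your opening move. Invariant (4) only gives $D_n(\gamma_2(G))/D_{n+1}(\gamma_2(G)) \cong D_n(\gamma_2(H))/D_{n+1}(\gamma_2(H))$ for all $n$; this pins down the isomorphism type of a derived subgroup only when that derived subgroup is already known to be abelian, and it cannot produce the abelianness of $\gamma_2(H)$ as an output. Indeed, a nonabelian group can share all Jennings quotients with an abelian one (for $p$ odd, the extraspecial group of order $p^3$ and exponent $p^2$ and the group $C_{p^2}\times C_p$ have identical quotients $D_n/D_{n+1}$), so "the quotients match, hence $\gamma_2(H)\cong C_{p^m}\times C_p^s$" is not licensed. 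The paper first proves $\gamma_2(H)$ is abelian: from $\Gamma(G)\cong\Gamma(H)$ (invariant (5)), $|\gamma_2(G)|=|\gamma_2(H)|$ (invariant (1) plus $|G|=|H|$) and Lemma~\ref{lemma:structure1}, the central subgroup $\Gamma(H)$ has index $p$ in $\gamma_2(H)$, whence $\gamma_2(H)$ is abelian; only then does (4) yield $\gamma_2(G)\cong\gamma_2(H)$. Since your $\Omega_{m-1}$-count, the centrality of $\gamma_2(H)^p$, and the exponent-$p$ argument for $\gamma_3(H)$ all presuppose $\gamma_2(H)\cong C_{p^m}\times C_p^s$, this gap propagates; it is, however, repaired by inserting the maximality argument just described, after which your remaining computations for elementary abelianness do go through.

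The second gap is the bound $\mathrm{class}(H)\leq 3$. You invoke invariant (9), which requires $G$ to be $2$-generated; that is not a hypothesis of the proposition (it is only assumed later, for Theorem~\ref{th:2gen-class3-orderG3}), so your argument does not cover the stated generality. More seriously, even granting $2$-generation, the decisive step — excluding $\gamma_4(H)\cong C_p$ — is only announced, and the proposed mechanism fails: the vanishing of brackets such as $[[\overline{c},\overline{x}],\overline{x}]$ in the degree-$4$ component of $\Jen(H)$ only asserts that the corresponding group commutators lie in $D_5(H)$, and since you have already placed $\gamma_4(H)$ inside $\gamma_2(H)^p\subseteq D_{2p}(H)$, these brackets vanish in $\Jen_4(H)$ automatically and carry no information about whether $\gamma_4(H)=1$. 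The paper's route is much shorter and uses only what you already have: since $\Gamma(H)$ is maximal in $\gamma_2(H)$, if $\gamma_3(H)\not\subseteq\Gamma(H)$ then $\gamma_2(H)=\Gamma(H)\gamma_3(H)$, so $\gamma_3(H)=[H,\gamma_2(H)]=[H,\gamma_3(H)]=\gamma_4(H)$ and the class of $H$ would be at most $2$ — contradicting either your own order count or invariant (8); hence $\gamma_3(H)\subseteq\Gamma(H)\subseteq\ZG(H)$, which gives $\gamma_4(H)=1$ and the class exactly $3$ in one stroke, with no generation hypothesis and no appeal to $\Jen$.
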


\begin{proof}
Thanks to the list in Section~\ref{sec:invariants} we know that
\begin{enumerate}[label=$(\arabic*)$]
\item $G/\gamma_2(G)\cong H/\gamma_2(H)$,
\item $\Gamma(G)\cong \Gamma(H)$,
\item for each $n$: we have $D_n(\gamma_2(G))/D_{n+1}(\gamma_2(G))\cong D_n(\gamma_2(H))/D_{n+1}(\gamma_2(H))$.
\end{enumerate}
From $(1)$ and $(2)$ combined with Lemma \ref{lemma:structure1}, we derive that $\Gamma(H)$ is maximal in $\gamma_2(H)$ and, in particular, $H$ has class at least $2$ and $\gamma_2(H)$ is abelian. We next show that $\gamma_3(H)$ is contained in $\Gamma(H)$; for a contradiction, we assume that this is not the case. It follows that $\gamma_2(H)=\Gamma(H)\gamma_3(H)$ and so, $\Gamma(H)$ being central in $H$, we get that 
\[
\gamma_3(H)=[H,\gamma_2(H)]=[H,\Gamma(H)\gamma_3(H)]=[H,\gamma_3(H)]=\gamma_4(H).
\]
In particular, $H$ has class $2$, contradicting  Section~\ref{sec:invariants}(8). This proves that $H$ has class $3$.
The fact that the abelian groups $\gamma_2(G)$ and $\gamma_2(H)$ are isomorphic follows from (3). 
 
We now claim that $\Gamma(H)=\gamma_2(H)^p\gamma_3(H)$. For this, assume first that $\gamma_2(H)$ does not have exponent $p$. We remark that, by Lemma \ref{lemma:structure1}, we have that $\Gamma(G)=\gamma_2(G)^p\gamma_3(G)$ and so, $\gamma_2(G)/\gamma_3(G)$ being cyclic, we also have that $\Gamma(G)=\mu_{\exp(\gamma_2(G))/p}(\gamma_2(G))$. Given the isomorphism type of $\gamma_2(G)$, this is the only subgroup of $\gamma_2(G)$ that is isomorphic to $\Gamma(G)$
. It follows from (2) that 
$$\Gamma(H)=\mu_{\exp(\gamma_2(H))/p}(\gamma_2(H))=\gamma_2(H)^p\gamma_3(H)$$ and thus that $\gamma_2(H)/\gamma_3(H)$ is cyclic. Now, $\gamma_2(H)^p$ and $\gamma_3(H)$ being central, we have that
\[
\gamma_3(H)^p=[H,\gamma_2(H)]^p=[H,\gamma_2(H)^p] = 1,
\]
equivalently $\gamma_3(H)$ has exponent $p$.

Assume now that $\gamma_2(G)^p = 1$. Then $\gamma_2(G)$ is elementary abelian and so is $\gamma_2(H)$. In particular, $\gamma_3(H)$ is elementary abelian. Now, relying on Section~\ref{sec:invariants}(6), the (Sandling) quotients $G/\gamma_3(G)$ and $H/\gamma_3(H)$ are isomorphic and so $|\gamma_3(G)|=|\gamma_3(H)|$. Since $\gamma_2(G)/\gamma_3(G)$ is cyclic, the equalities $$|\gamma_2(G):\gamma_3(G)|=p=|\gamma_2(H):\gamma_3(H)|$$ hold, proving that $\gamma_2(H)/\gamma_3(H)$ is cyclic.
\end{proof} 

%

\begin{lemma}\label{lemma:J_X}
Let $X$ be a $2$-generated $p$-group of class $3$ such that $\gamma_3(X)$ is elementary abelian. Let moreover $J_X$ be a subgroup of $X$ containing $T=\Phi(\gamma_2(X))\cap \gamma_3(X)$ such that 
$J_X/T=\Cyc_{X/T}(\gamma_2(X)/T).$
Then the following hold: 
\begin{enumerate}[label=$(\arabic*)$]
\item $p\leq |\gamma_3(X)|\leq p^2$,
\item $|X:J_X|=p$,
\item if $|\gamma_3(X)|=p^2$, then $\Phi(X)\neq X^p\gamma_3(X)$.
\end{enumerate}
\end{lemma}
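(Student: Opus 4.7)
The plan is to exploit the class-$3$ consequences heavily. Since $\gamma_4(X)=1$, the subgroup $\gamma_3(X)$ is central and $[\gamma_2(X),\gamma_2(X)]\subseteq \gamma_4(X)=1$, so $\gamma_2(X)$ is abelian and $\Phi(\gamma_2(X))=\gamma_2(X)^p$, giving $T=\gamma_2(X)^p\cap \gamma_3(X)$. Write $X=\langle x_1,x_2\rangle$ and $c=[x_1,x_2]$. Two-generation makes $\gamma_2(X)/\gamma_3(X)$ cyclic generated by $\bar c$, so $\gamma_2(X)=\langle c\rangle\gamma_3(X)$, and $[X,\gamma_3(X)]\subseteq\gamma_4(X)=1$ yields $\gamma_3(X)=[X,\gamma_2(X)]=[X,c]$. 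Because $\gamma_3(X)\subseteq Z(X)$, the map $x\mapsto [x,c]$ is a group homomorphism $X\to\gamma_3(X)$ whose image is $\gamma_3(X)=\langle [x_1,c],[x_2,c]\rangle$. Two-generation together with $\gamma_3(X)$ being elementary abelian forces $|\gamma_3(X)|\leq p^2$, and class $3$ gives $\gamma_3(X)\neq 1$, settling (1).

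For (2), I would compose the homomorphism above with the projection to $\gamma_3(X)/T$, producing $\phi\colon X\to \gamma_3(X)/T$ with $\phi(x)=[x,c]\,T$. The identity $[x,\gamma_2(X)]=\langle [x,c]\rangle$, immediate from $\gamma_3(X)\subseteq Z(X)$ and $\gamma_2(X)=\langle c\rangle\gamma_3(X)$, identifies $J_X=\ker\phi$. I would then verify $\Phi(X)\subseteq J_X$: the inclusion $\gamma_2(X)\subseteq J_X$ is clear because $[\gamma_2(X),\gamma_2(X)]=1\subseteq T$, and for $X^p$ the class-$3$ identity $[x^p,c]=[x,c]^p\cdot [x,c,x]^{\binom{p}{2}}$ collapses to $1$, since $[x,c]\in\gamma_3(X)$ has order dividing $p$ and $[x,c,x]=1$ by centrality of $\gamma_3(X)$. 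Hence $X/J_X$ is elementary abelian of rank at most $2$. Pinning the index at exactly $p$ is the subtle part: ruling out $J_X=X$ forces $\gamma_3(X)\subseteq T\subseteq\gamma_2(X)^p=\langle c^p\rangle$, a cyclic subgroup; combined with the cyclicity of $\gamma_2(X)/\gamma_3(X)$ this collapses $\gamma_2(X)$ enough to contradict class $3$, and a parallel cyclicity argument rules out the rank-$2$ image.

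Part (3) is the main obstacle. Suppose, for contradiction, $|\gamma_3(X)|=p^2$ and $\Phi(X)=X^p\gamma_3(X)$; then $c\in X^p\gamma_3(X)$, so $c=z_1^p\cdots z_n^p\,\gamma$ for some $z_i\in X$ and $\gamma\in\gamma_3(X)$. The key input is two class-$3$ commutator identities, both valid for $p$ odd: the formula $[x,z^p]=[x,z]^p\cdot[x,z,z]^{\binom{p}{2}}=[x,z]^p$, using $p\mid\binom{p}{2}$ and $\gamma_3(X)$ having exponent $p$; and the fact that $X^p$ centralizes $\gamma_2(X)$, because $[u,v^p]=[u,v]^p=1$ for $u\in\gamma_2(X)$ (since $[u,v]\in\gamma_3(X)$). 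Combined with $\gamma$ being central, an iterated expansion gives
\[
[x,c]=\prod_{i=1}^{n}[x,z_i]^{p}=\Bigl(\prod_{i=1}^{n}[x,z_i]\Bigr)^{p}=[x,w]^{p},
\]
where $w=z_1\cdots z_n$ and the last equality uses that $x\mapsto [x,\,\cdot\,]\bmod\gamma_3(X)$ is a homomorphism into $\gamma_2(X)/\gamma_3(X)$, so $\prod_i[x,z_i]$ and $[x,w]$ differ by an element of $\gamma_3(X)$ which dies upon raising to the $p$-th power. Hence $\gamma_3(X)=[X,c]=\langle [x_1,w]^p,[x_2,w]^p\rangle$. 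Finally, the homomorphism $x\mapsto [x,w]\bmod\gamma_3(X)$ lands in the cyclic group $\langle\bar c\rangle$, so $[x_i,w]=c^{a_i}g_i$ with $a_i\in\mathbb{Z}$ and $g_i\in\gamma_3(X)$; in the abelian $\gamma_2(X)$, with $g_i^p=1$, this gives $[x_i,w]^p=c^{a_ip}$. Therefore $\gamma_3(X)\subseteq\langle c^p\rangle$, which is cyclic, contradicting $\gamma_3(X)$ being elementary abelian of rank $2$. The heart of the argument is this rank-collapse from $2$ to $1$, powered by the compatibility of $p$-th powers with class-$3$ commutators for odd $p$.
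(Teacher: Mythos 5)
Your parts (1) and (3) are correct. For (1) you use the homomorphism $x\mapsto [x,c]$ into the central subgroup $\gamma_3(X)$, while the paper phrases the same fact as a surjection induced by the commutator map from $X/\Phi(X)\otimes\gamma_2(X)/\Gamma(X)$ onto $\gamma_3(X)$; this is the same argument. Your proof of (3) is a genuinely different route: the paper argues with a bilinear map $X/\Phi(X)\times X^p\gamma_3(X)/\Gamma(X)\rightarrow\gamma_3(X)$ and a kernel/counting estimate, whereas you write $c\in X^p\gamma_3(X)$ explicitly as a product of $p$-th powers times a central element and use the class-$3$ identities $[x,z^p]=[x,z]^p[x,z,z]^{\binom{p}{2}}=[x,z]^p$ (valid for $p$ odd, since $\gamma_3(X)$ has exponent $p$) to force $\gamma_3(X)\leq\langle c^p\rangle$, contradicting rank $2$. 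Your computation is complete and arguably more transparent; note only that it needs $p$ odd, which is a standing assumption of Section 4 but not a literal hypothesis of the lemma.

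Part (2) is where the genuine gap lies, and it sits exactly at the step you flag as subtle. Your reduction is fine: $J_X=\ker\bigl(x\mapsto [x,c]T\bigr)$, $\Phi(X)\subseteq J_X$, hence $|X:J_X|=|\gamma_3(X):T|\in\{1,p,p^2\}$. But the two exclusions you then assert do not follow. If $J_X=X$ you correctly get $\gamma_3(X)\leq\langle c^p\rangle$ and hence $\gamma_2(X)=\langle c\rangle$ cyclic, but a cyclic derived subgroup is perfectly compatible with class $3$: the modular group $\langle a,b\mid a^{p^3}=b^p=1,\ a^b=a^{1+p}\rangle$ is $2$-generated of class exactly $3$ with $\gamma_2=\langle a^p\rangle$ and $\gamma_3=\langle a^{p^2}\rangle$ elementary abelian, and for it $T=\gamma_3$ and $J_X=X$. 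The unspecified ``parallel cyclicity argument'' for excluding index $p^2$ is likewise not available: in the relatively free $2$-generator group of exponent $p$ and class $3$ (for $p\geq 5$) one has $T=1$ and $J_X=\Cyc_X(\gamma_2(X))=\Phi(X)$, of index $p^2$.

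So the index cannot be pinned at exactly $p$ from the lemma's stated hypotheses alone; what is really needed is the equality $|\gamma_3(X):\gamma_2(X)^p\cap\gamma_3(X)|=p$, i.e.\ precisely the assertion the paper's own proof makes at this point (``$\gamma_3(\varphi(X))$ has order $p$''), and which does hold for the groups to which the lemma is applied later in the section, where $\gamma_2(X)\cong C_{p^{t+1}}\times C_p$ with the $C_p$-factor contained in $\gamma_3(X)$. Any honest completion of your (2) must isolate and invoke that extra structural input about how $\gamma_3(X)$ meets $\gamma_2(X)^p$, rather than try to derive a contradiction from $2$-generation and class $3$ alone.
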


\begin{proof}
Since $X$ is $2$-generated, the quotient $\gamma_2(X)/\gamma_3(X)$ is cyclic and thus the commutator map induces a surjective homomorphism $X/\Phi(X)\otimes \gamma_2(X)/\Gamma(X)\rightarrow \gamma_3(X)$. As a consequence of Lemma \ref{lemma:structure1}, the index $|\gamma_2(X):\Gamma(X)|$ is $p$ and thus $|\gamma_3(X)|\leq|X/\Phi(X)|=p^2$. Since $X$ is assumed to have class $3$, we also have $|\gamma_3(X)|\geq p$. Let $\varphi: X  \rightarrow X/T$ be the natural projection. It follows from (1) and the fact that $\gamma_2(X)/\gamma_3(X)$ is cyclic that $\gamma_3(\varphi(X))$ has order $p$. Now, the commutator map induces a non-degenerate bilinear map
\[
\varphi(X)/\varphi(J_X)\times\gamma_2(\varphi(X))/\Gamma(\varphi(X))\rightarrow\gamma_3(\varphi(X))
\]
and so, $\gamma_3(\varphi(X))$ having order $p$, we derive $|X:J_X|=|\gamma_2(X):\Gamma(X)|=p$. To conclude, assume that $|\gamma_3(X)|=p^2$ and assume, for a contradiction, that $\Phi(X)=X^p\gamma_3(X)$. Then the commutator map 
induces a bilinear map 
\[
X/\Phi(X)\times X^p\gamma_3(X)/\Gamma(X)\rightarrow\gamma_3(X)
\]
whose image generates $\gamma_3(X)$. From the fact that $\gamma_3(X)$ is elementary abelian, we  get that the last bilinear map has a non-trivial right kernel and thus $|\gamma_3(X)|$ has order at most $p^{1\cdot\log_p|X^p\gamma_3(X):\Gamma(X)|}=p$. Contradiction. 
\end{proof}

\noindent
The remaining part of this section will be devoted to the proof of Theorem \ref{th:2gen-class3-orderG3}. To this end, the following \underline{assumptions} will hold until the end of Section \ref{sec:2genClass3}. Assume $p$ is odd and let $H$ be a $p$-group such that $kG \cong kH$. Assume, additionally, that $G$ is $2$-generated with $\gamma_3(G)$ central of exponent $p$. Without loss of generality, we assume that $kG=kH$ and so we view $G$ and $H$ as group bases of the same modular group algebra. If the class of $G$ is smaller than $3$, then it follows from \cite{BdR20} that $G \cong H$ and in particular Theorem \ref{th:2gen-class3-orderG3} holds. We assume that $G$ has class $3$.

 From group theoretical invariants and Proposition \ref{prop:invs},  we know that $H$ is $2$-generated of class $3$, $\gamma_2(G) \cong \gamma_2(H)$, and that $\gamma_3(H)$ is elementary-abelian. We will show that $|\gamma_3(G)| = |\gamma_3(H)|$. For a contradiction, we \underline{assume} that $|\gamma_3(G)| < |\gamma_3(H)|$. 

We fix the notation. Let $g_G,h_G$ and $g_H,h_H$ be generators of $G$ and $H$, respectively. In case it is clear which group we are considering, we will omit indices. As a consequence of Lemma \ref{lemma:J_X}(2), we assume without loss of generality that $h_G$ and $h_H$ are elements of $J_G$ and $J_H$, respectively. 
 Set $c = [g,h]$ and let $p^{t+1}$ denote the order of $c$. Observe that, thanks to Lemma \ref{lemma:J_X}(1), we have that $|\gamma_3(G)| = p$ and $|\gamma_3(H)| = p^2$. 
Summarizing, in both groups we have
\[
c = [g,h],  \quad  |c|=p^{t+1}, \quad  [c,g]=d \neq 1, \quad d^p=1, 
\]
while, up to possible rescaling (namely up to possibly replacing $g_H$ with a power), the following hold independently
\begin{alignat*}{3}
[c_G,h_G] & =1, \quad  \ \
\gamma_3(G)&&=\gen{[c_G,g_G]}=\gen{d_G},   \\
[c_H,h_H]&=c_H^{p^t},  \quad \gamma_3(H)&&=\gen{[c_H,h_H],[c_H,g_H]}=\gen{c_H^{p^t}}\times\gen{d_H}.
 \end{alignat*}  
From the fact that $\gamma_2(G)$ and $\gamma_2(H)$ are isomorphic, it follows 
that
$$\gamma_2(G)=\langle c_G \rangle \times \langle d_G \rangle \textup{ and } \gamma_2(H) = \langle c_H \rangle \times \langle d_H \rangle.$$
 Moreover, from Lemma~\ref{lemma:comm-formulas} we derive that
 $
[G^{p^{t+1}},G]=[H^{p^{t+1}},H]=1$
while, for any integer $i$, the following hold:
\[[h^i g^{p^t},h] \neq 1 \textup{ and }[g,h^{p^t}g^i] \neq 1.
 \]
Let $G^{\ab}=G/\gamma_2(G)$ denote the abelianization of $G$, as in Section \ref{sec:preliminaries-small}, and let
 $$\pi:G \longrightarrow G/G^{p^{t+1}}\gamma_2(G)\cong G^{\ab}/(G^{\ab})^{p^{t+1}}
$$ denote the natural projection. Then $\pi(g_G)$ and $\pi(h_G)$ both have order $p^{t+1}$ and 
$\pi(G)=\gen{\pi(g_G)}\oplus\gen{\pi(h_G)}.$
It follows, for every $\ell \leq t+1$, that 
$$(G^{p^\ell} \setminus G^{p^{t+1}}) \cap \gamma_2(G) \subseteq \langle c_G^{p^\ell} \rangle$$ 
and an analog statement can be derived for $H$ by looking at $H/H^{p^{t+1}}\gamma_2(H)$.
Note that, if $t=0$, then Section~\ref{sec:invariants}(6) yields that $G \cong H$ and so Theorem \ref{th:2gen-class3-orderG3} follows. We \underline{assume} that $t \geq 1$. 
We fix, moreover, $r$ to be an integer such that, for any other integer $r_0$, one has  
$$\wt\left( c_G^{r} d_G \right) \geq \wt\left( c_G^{r_0} d_G \right).$$ 

\begin{lemma}\label{lemma:JenningsBasisChoice}
The following hold:
\begin{enumerate}[label=$(\arabic*)$]
\item $\wt(c_G)=\wt(c_H) = 2$, 
\item for each $\ell \leq t$, one has $\wt\left(g_G^{p^\ell}\right) = \wt\left(h_G^{p^\ell}\right) =\wt\left(g_H^{p^\ell}\right) = \wt\left(h_H^{p^\ell}\right)= p^\ell$, 
\item for $X\in\graffe{G,H}$, there exists a Jennings tuple $\cor{J}=(g_1,\ldots,g_m)$ of $X$ satisfying
\[\graffe{g_1,\ldots,g_m}\supseteq \graffe{g,g^p,\ldots,g^{p^t}, h,h^p,\ldots,h^{p^t}, c, c^p,\ldots,c^{p^t}}\cup\begin{cases}
\emptyset & \textup{if } X=H, \\
\graffe{c^rd} & \textup{if } X=G,  
\end{cases}
\]
and, if $i,j\in\graffe{1,\ldots,m}$ and $\ell\leq t$ are such that $g_i=h^{p^\ell}$ and $g_j=g^{p^\ell}$, then $i<j$.
\end{enumerate}
\end{lemma}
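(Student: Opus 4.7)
The plan is to establish (1), (2), (3) in order. Parts (1) and (2) are weight computations; they then justify the Jennings-tuple construction in (3).

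For (1), I will show $c \notin D_3(X) = X^p\gamma_3(X)$ (the equality holds because $p$ is odd and $\gamma_3(X)$ has exponent $p$, so only the terms $i=1,j=1$ and $i=3,j=0$ survive). Suppose, for contradiction, $c \in X^p\gamma_3(X)$, and pass to the class-$2$ quotient $\bar X = X/\gamma_3(X)$: a Hall--Petrescu computation (whose corrections vanish because $\binom{p}{2}\equiv 0\bmod p$ for $p$ odd) gives $\bar X^p = \langle \bar g^p, \bar h^p, \bar c^p\rangle$, so that
\[
c \equiv g^{Ap}h^{Bp}c^{Cp} \pmod{\gamma_3(X)}
\]
for integers $A, B, C$. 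Expanding $[c,g]$ using Lemma~\ref{lemma:comm-formulas}, together with $\gamma_3(X)\subseteq \ZG(X)$, $\gamma_2(X)$ abelian, $[c^k,g]=d^k$, and $d^p=1$, the computation reduces to $[c,g]=[h^{Bp},g]$. A Hall-type commutator expansion then produces $[h^{Bp},g]=c^{-Bp}$ uniformly in both groups: in $H$ the non-trivial conjugation $c_H^{h_H}=c_H^{1+p^t}$ introduces an extra term $\binom{Bp}{2}p^t$, but this is divisible by $p^{t+1}=|c|$ and thus vanishes modulo the order of $c$. On the other hand $[c,g]=d\in\langle d\rangle$ while $c^{-Bp}\in\langle c\rangle$, and by the direct-product decomposition $\gamma_2(X)=\langle c\rangle\times\langle d\rangle$ one has $\langle c\rangle\cap\langle d\rangle=1$; combined with $d\neq 1$ this is a contradiction.

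For (2), the bound $\wt(g^{p^\ell})\geq p^\ell$ is immediate from $D_n(X)^p\subseteq D_{np}(X)$. For the reverse inequality, the formula $D_n(X)=\prod_{ip^j\geq n}\gamma_i(X)^{p^j}$ under our hypotheses (class $3$, $\gamma_3$ of exponent $p$, $p$ odd) gives $D_{p^\ell+1}(X) = X^{p^{\ell+1}}\gamma_2(X)^{p^\ell}$. If $g^{p^\ell}$ lay in $D_{p^\ell+1}(X)$, projecting to $X^{\ab}$ would force $\bar g^{p^\ell}\in (X^{\ab})^{p^{\ell+1}}$. But $\pi(g)$ has order $p^{t+1}$, so the order of $g$ in $X^{\ab}$ is at least $p^{t+1}$, and for $\ell\leq t$ the element $\bar g^{p^\ell}$ is non-trivial of order $\geq p$ in its cyclic summand and cannot be a $p^{\ell+1}$-th power there, a contradiction. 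The same argument handles $h^{p^\ell}$ and extends verbatim to $X=H$.

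For (3), I assemble the Jennings tuple weight-by-weight. At weight~$1$, the elements $g, h$ form a basis of $X/\Phi(X)$ since $X$ is $2$-generated, and I place $h$ before $g$. At weight $p^\ell$ for $1\leq\ell\leq t$, the elements $h^{p^\ell}, g^{p^\ell}$ have weight exactly $p^\ell$ by~(2), and the abelianization argument in~(2) shows their images in $D_{p^\ell}(X)/D_{p^\ell+1}(X)$ are linearly independent (non-zero components in distinct cyclic summands of $X^{\ab}$), so I include them with $h^{p^\ell}$ first. At weight~$2$, $c$ has nonzero image in $D_2(X)/D_3(X)$ by~(1). The elements $c^{p^\ell}$ are included at their own weights (each at least $2p^\ell$), and for $X=G$ the element $c^r d$ is included at weight $\wt(c^r d)$, where $r$ was defined to maximize this weight. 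Each weight class is then completed to a basis of $D_n(X)/D_{n+1}(X)$ by adding any further transversal lifts, producing a Jennings tuple with the required containment and ordering.

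The main obstacle is~(1): although the strategy of deriving a commutator contradiction is clean, verifying that the identity $[h^{Bp},g]=c^{-Bp}$ holds in $H$ (despite the non-trivial conjugation $c_H^{h_H}=c_H^{1+p^t}$) as well as in $G$ hinges on the numerical coincidence $\binom{Bp}{2}p^t\equiv 0\pmod{p^{t+1}}$ for $p$ odd, which is what unifies the two cases.
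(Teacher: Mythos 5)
Your parts (1) and (2) are correct. In fact, for (1) your Hall--Petrescu/commutator computation (showing directly that $c\in X^p\gamma_3(X)$ would force $d=[c,g]=[h^{Bp},g]=c^{-Bp}\in\gen{c}\cap\gen{d}=1$) is a self-contained alternative to the paper, which instead deduces $c\notin X^p\gamma_3(X)$ from Lemma~\ref{lemma:J_X}(3); your argument has the mild advantage of treating $G$ and $H$ uniformly, whereas Lemma~\ref{lemma:J_X}(3) formally addresses only the case $|\gamma_3(X)|=p^2$. Part (2) coincides with the paper's argument.

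Part (3), however, has a genuine gap. Writing ``each weight class is then completed to a basis of $D_n(X)/D_{n+1}(X)$'' presupposes that the designated elements which happen to share a weight have linearly independent images in that quotient, and verifying this is precisely the content of the lemma (and the bulk of the paper's proof); you only check it for the pairs $(g^{p^\ell},h^{p^\ell})$. Three collisions are left open. First, if $\wt\left(c^{p^\ell}\right)>2p^\ell$, the weight of $c^{p^\ell}$ could a priori equal some $p^j$ and clash with $g^{p^j},h^{p^j}$; the paper excludes this by showing that then $c^{p^\ell}\in D_{2p^\ell+1}(G)=G^{p^{\ell+1}}$, whence, by the inclusion $(G^{p^{\ell+1}}\setminus G^{p^{t+1}})\cap\gamma_2(G)\subseteq\gen{c^{p^{\ell+1}}}$, one gets $c^{p^\ell}\in G^{p^{t+1}}$, of weight larger than that of every $g^{p^j},h^{p^j}$. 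Second, for $p=3$ one can have $\wt\left(c^rd\right)=3=\wt\left(g^3\right)=\wt\left(h^3\right)$, and the independence of $\overline{c^rd},\overline{g^3},\overline{h^3}$ in $D_3(G)/D_4(G)$ requires the projection argument via $\pi$. Third, and most seriously, nothing in your construction prevents $\overline{c^rd}$ from being congruent to a nonzero scalar multiple of some $\overline{c^{p^\ell}}$ modulo $D_{\wt\left(c^rd\right)+1}(G)$, in which case both cannot occur in one Jennings tuple and the claimed tuple would not exist. The paper rules this out using the defining maximality of $r$: a dependence would produce an integer $m$ with $\wt\left(c^{r+mp^\ell}d\right)>\wt\left(c^rd\right)$, contradicting the choice of $r$. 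Your proposal cites $r$ but never uses this maximality, which is exactly the missing ingredient; without these three verifications the assembly of the Jennings tuple is not justified.
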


\begin{proof}
To lighten the notation, we will avoid the use of indices in this proof.
 
(1) Thanks to Lemma~\ref{lemma:J_X}(3), the element $c$ is not a product of $p$-th powers and has thus weight $2$. 

(2) As $g^{p^t}$ and $h^{p^t}$ are not elements of the derived subgroup, we have, for all $\ell \leq t$, that  $$\wt\left(g^{p^\ell}\right) = \wt\left(h^{p^\ell}\right) = p^\ell.$$

(3) We will prove the existence of $\cor{J}$ for $X=G$; similar arguments apply to $H$.
We define 
$$\tau = \{g,g^p,\ldots,g^{p^t}, h,h^p,\ldots,h^{p^t},c, c^p,\ldots,c^{p^t}\}.$$
 To show that the elements of $\tau$ figure as entries of a Jennings tuple of $G$, we shall show that elements of $\tau$ with the same weight $i$ in $G$ are linearly independent modulo $D_{i+1}(G)$. For non-negative integers $i,j<t$, 
it follows from the properties of the dimension subgroups that $\wt\left(g^{p^i}\right)=\wt\left(g^{p^j}\right)$ if and only if $i=j$; analogously it holds that $\wt\left(h^{p^i}\right)=\wt\left(h^{p^j}\right)$ or $\wt\left(c^{p^i}\right)=\wt\left(c^{p^j}\right)$ if and only if $i=j$.

We proceed by showing that, for any $\ell \leq t$, the weight of $c^{p^\ell}$ is different from the weight of any other element in $\tau$. If $\wt\left(c^{p^\ell} \right) = 2p^\ell$, the same argument as above yields the claim.
Assume now that,  for some $\ell \leq t$, one has $\wt\left(c^{p^\ell}\right) \neq 2p^\ell$, equivalently that $c^{p^\ell} \in D_{2p^\ell +1}(G) = G^{p^{\ell +1}}$.
From the inclusion $(G^{p^{\ell+1}} \setminus G^{p^{t+1}}) \cap \gamma_2(G) \subseteq \langle c^{p^{\ell+1}} \rangle$, we conclude that $c^{p^\ell} \in G^{p^{t+1}}$ and so $c^{p^\ell}$ has weight larger than that of any element in the set $\{g,g^p,\ldots,g^{p^t},h,h^p,\ldots,h^{p^t} \}$. 

We have proven that the only elements in $\tau$ having the same weight come in pairs $(g^{p^\ell}, h^{p^\ell})$, for $\ell \leq t$. Moreover, two elements $g^{p^\ell}$ and $h^{p^\ell}$ are linearly independent in $D_{p^\ell}(G)/D_{p^\ell+1}(G)$ as they span the $2$-dimensional $k$-vector space $(G^{\ab})^{p^\ell}/(G^{\ab})^{p^{\ell+1}}$. 
In the choice of the Jennings tuple $\cor{J}$, we take without loss of generality $h^{p^\ell}$ to precede $g^{p^\ell}$.

We finally assume, for a contradiction, that $c^rd$ has the same weight as some other element of $\tau$. Note that, as $d \in \gamma_3(G)$, we have $\wt\left(c^rd \right) \geq 3$. If $\wt\left( c^rd \right) = 3$, we are reduced to considering the case $p=3$ and showing that $g^3$, $h^3$ and $c^rd$ are linearly independent in $D_3(G)/D_4(G) = G^3\gamma_2(G)^3\gamma_3(G)/G^9\gamma_2(G)^3$. Now, if $g^3$, $h^3$ and $c^rd$ are linearly dependent, then there exist integers $0 \leq i_1,i_2,i_3 \leq 2$, of which at least two are non-zero modulo $3$, such that $(h^3)^{i_1}(g^3)^{i_2} (c^rd)^{i_3} \in G^9\gamma_2(G)^3$. It then follows that $\pi((h^3)^{i_1}(g^3)^{i_2}) \in (G^{\ab})^9$, contradicting the structure of $\pi(G)$. 

Assume now, for some integer $s$, that $\wt\left( c^rd \right) = p^s > 3$. It follows that $c^rd \in G^{p^s} \setminus G^{p^{s+1}}$. If $s \leq t$, then the inclusion $(G^{p^{s+1}} \setminus G^{p^{t+1}}) \cap \gamma_2(G) \subseteq \langle c^{p^{s+1}} \rangle$ provides a contradiction. So $s \geq t+1$ and, in particular, $c^rd$ has weight different from the weights of any $g^{p^\ell}$ and $h^{p^\ell}$ with $\ell \leq t$.  In conclusion, if, for some $\ell \leq t$, the elements $\overline{c^rd}$ and $\overline{c^{p^\ell}}$ are linearly dependent modulo $D_{\wt\left( c^rd \right) + 1}(G)$  then there is an integer $m$ such that $\wt\left(c^rd(c^{p^\ell})^m\right) > \wt\left(c^rd\right)$, contradicting the choice of $r$. 
\end{proof}

\noindent
We fix Jennings tuples of $G$ and $H$ and \underline{assume} them to satisfy the properties listed in Lemma~\ref{lemma:JenningsBasisChoice}.
Let, moreover, $m$ be an integer such that, for any integer $m_0$, one has 
$$\wt\left( c_G^{p^t}d_G^m \right) \geq \wt\left( c_G^{p^t}d_G^{m_0} \right).$$
Set $w = \wt\left( c_G^{p^t}d_G^m \right)$ and observe that
$$\wt\left(c_G\right) > \wt\left(c_G^p\right) > \ldots > \wt\left(c_G^{p^t}\right) \geq \wt\left(c^{p^t}d^m \right) = w.  $$
In particular, no element among $c_G,c_G^p,\ldots,c_G^{p^t}$ has weight bigger than $w$.
\vspace{8pt}\\
\noindent
Until the end of this section, \underline{let}  $\mathcal{K}$ be the smallest ideal of $kG$ containing $\overline{d_G}$ and $I(kG)^{2w}$. 
We define 
$$Z=\ZG(kG) + \mathcal{K}/\mathcal{K} \textup{ and } \mu_p^0(Z) = \{a \in Z \mid a^p = 0 \}.$$
By abuse of notation, we will denote the elements of $Z$ as they were elements of $kG$.
Then $\mu_p^0(Z)$ is a $k$-vector space containing $\overline{c_G^{p^t}}$, $\overline{c_H^{p^t}}$ and $\overline{d_H}$ in which, by definition of $\cor{K}$, the image of $\overline{c_G^{p^t}}$ is non-trivial (see also \eqref{eq:c^2} below).  We fix a subspace $L$ of $\mu_p^0(Z)$ such that 
$$\mu_p^0(Z) = \langle \overline{c_G^{p^t}} \rangle \oplus L$$
 and let $\mathcal{L}$ be the ideal of $kG$ containing $\mathcal{K}$ such that $\mathcal{L}/\mathcal{K} = L$. There then exist $\alpha$ and $\beta$ in $k$ such that
$$ \overline{d_H} \equiv \alpha \overline{c_G^{p^t}} \bmod \mathcal{L} \ \text{ and } \ \overline{c_H^{p^t}} \equiv \beta \overline{c_G^{p^t}} \bmod \mathcal{L}.$$
Moreover, modulo $\cor{L}$, the following equalities hold:
\begin{equation}\label{eq:c^2}
0 \not\equiv \overline{c_G^{p^t} d^m} = \overline{c_G^{p^t}} + \overline{d_G^m} + \overline{c_G^{p^t}} \overline{d_G^m} \equiv \overline{c_G^{p^t}} \bmod \mathcal{L}
 \ \textup{ and therefore } \
0 \equiv \overline{c_G^{p^t} d^m}^2 \equiv \overline{c_G^{p^t}}^2 \bmod \mathcal{L}.
\end{equation}
We let $\varphi_L:kG\rightarrow kG/\cor{L}$ be the canonical projection. 

\begin{lemma}\label{lemma:beta}
One has $\beta \neq 0$.
\end{lemma}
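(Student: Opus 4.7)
The plan is to argue by contradiction: assume $\beta = 0$, so that $\overline{c_H^{p^t}} \in \mathcal{L}$, and derive a contradiction in two main steps.

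First, I would translate the assumption into a Lie-bracket statement. Applying Lemma~\ref{lemma:BasicJenningBasisFormulas} to the identity $c_H h_H = h_H c_H \cdot [c_H, h_H] = h_H c_H \cdot c_H^{p^t}$ holding in $H$, one obtains
\[
[\overline{c_H}, \overline{h_H}] := \overline{c_H}\,\overline{h_H} - \overline{h_H}\,\overline{c_H} = c_H h_H \cdot \overline{c_H^{p^t}}.
\]
Since $c_H h_H$ is a unit in $kG$ and $\mathcal{L}$ is a two-sided ideal, the assumption $\overline{c_H^{p^t}} \in \mathcal{L}$ is equivalent to $[\overline{c_H}, \overline{h_H}] \in \mathcal{L}$.

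Second, I would compute $\overline{c_H^{p^t}}$ via the Jennings basis of $G$. In characteristic $p$ one has $\overline{c_H^{p^t}} = \overline{c_H}^{p^t}$. By the choice of Jennings tuple in Lemma~\ref{lemma:JenningsBasisChoice}, the weight-$2$ graded piece $D_2(G)/D_3(G)$ is one-dimensional and spanned by the class of $c_G$ (since $p$ is odd, so $g_G^p$ and $h_G^p$ have weight $p\geq 3$). Therefore $\overline{c_H} \equiv \lambda\, \overline{c_G} \pmod{I(kG)^3}$ for some $\lambda \in k^\times$. The restricted $p^t$-th power map on $\Jen(G)$ being $p^t$-semilinear on homogeneous pieces, this yields
\[
\overline{c_H^{p^t}} \equiv \lambda^{p^t}\, \overline{c_G^{p^t}} \pmod{I(kG)^{w_G+1}},
\]
where $w_G = \wt(c_G^{p^t})$.

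To close the gap from this filtration-level congruence to one modulo $\mathcal{L}$, I would argue that the discrepancy $\overline{c_H^{p^t}} - \lambda^{p^t}\,\overline{c_G^{p^t}}$ is a central, $p$-nilpotent element of weight strictly greater than $w_G$, so represents an element of $\mu_p^0(Z)$. Using the decomposition $\mu_p^0(Z) = \langle \overline{c_G^{p^t}}\rangle \oplus L$ together with the fact that $\mathcal{K}$ contains both all $\overline{d_G}$-multiples and $I(kG)^{2w}$, the successive higher-weight central contributions get absorbed into $\mathcal{K}+L\subseteq \mathcal{L}$. This produces $\overline{c_H^{p^t}} \equiv \lambda^{p^t}\, \overline{c_G^{p^t}} \pmod{\mathcal{L}}$ with $\lambda^{p^t}\neq 0$, contradicting the assumption $\beta = 0$.

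The main obstacle is precisely this final absorption step: ensuring that the intermediate-weight central corrections (of weights strictly between $w_G$ and $2w$) do not contribute an independent multiple of $\overline{c_G^{p^t}}$ in $\mu_p^0(Z)/L$. This should follow from the maximality properties used in defining $m$ and $w$ together with the design of the Jennings tuple in Lemma~\ref{lemma:JenningsBasisChoice}, which together pin down $\overline{c_G^{p^t}}$ as the unique non-$\mathcal{K}$ central class carrying the relevant weight-$w$ data.
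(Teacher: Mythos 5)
There is a genuine gap, in fact two. First, your key congruence $\overline{c_H^{p^t}} \equiv \lambda^{p^t}\, \overline{c_G^{p^t}} \pmod{I(kG)^{w_G+1}}$ does not follow from the semilinearity of the restricted power map on $\Jen(G)$: that map only controls passage between the graded pieces $D_n(G)/D_{n+1}(G)$, so from $\overline{c_H}\equiv\lambda\overline{c_G}\bmod I(kG)^3$ one gets, by iterating the Jacobson-type expansion, only $\overline{c_H}^{p^t}\equiv\lambda^{p^t}\overline{c_G}^{p^t}\bmod I(kG)^{2p^t+1}$. This is strictly weaker than what you assert as soon as $\wt\left(c_G^{p^t}\right)>2p^t$, a case the paper does not exclude (see the proof of Lemma~\ref{lemma:JenningsBasisChoice}, where $c^{p^\ell}$ may lie in $G^{p^{t+1}}$ and hence have weight much larger than $2p^\ell$); in that case both elements merely lie in $I(kG)^{2p^t+1}$ and you get no comparison modulo $I(kG)^{w_G+1}$. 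Second, the ``absorption'' step you flag as the main obstacle is not a technicality: it is the entire content of the lemma. Even granting your congruence, you must show that the central, $p$-nilpotent error $\overline{c_H^{p^t}}-\lambda^{p^t}\overline{c_G^{p^t}}$ has zero component along $\overline{c_G^{p^t}}$ in the fixed decomposition $\mu_p^0(Z)=\langle\overline{c_G^{p^t}}\rangle\oplus L$. But $L$ is an arbitrary complement with no compatibility with the weight filtration, and ``large weight'' does not force membership in $\mathcal{L}$: for instance the class of $\overline{c_G^{p^t}}$ itself coincides in $Z$ with that of $\overline{c_G^{p^t}d_G^m}$, whose weight is $w\geq w_G$, yet it is by construction not in $\mathcal{L}$. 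So the claim that weight maximality ``pins down'' $\overline{c_G^{p^t}}$ as the only surviving class is not valid, and nothing in your argument rules out that the error contributes $-\lambda^{p^t}\overline{c_G^{p^t}}$ modulo $\mathcal{L}$.

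For comparison, the paper avoids both issues by arguing combinatorially in a Jennings basis of $G$: using the canonicity of the Zassenhaus ideals it writes $\overline{c_H^{p^t}}=\bar a+y$ with $a\in D_{\tilde w}(G)$ and $y\in I(kG)^{\tilde w+1}$, uses the canonicity of $kGI(k\gamma_2(G))$ to see that the weight-$\tilde w$ group contribution must come from a tuple entry lying in $\gamma_2(G)$, and then writes $\overline{c_H}=\delta_1\overline{c_G}+\delta_2\overline{d_G}$ with $\delta_1,\delta_2\in kG$ (coefficients in the algebra, not scalars) so that every support element of $\overline{c_H}^{p^t}$ carries at least $p^t$ factors from $\{\overline{c_G},\overline{d_G}\}$; this forces the basis element $\overline{c_G^{p^t}}$ to occur with nonzero coefficient, whence $\beta\neq 0$, with no need to compare $\overline{c_H^{p^t}}$ and $\overline{c_G^{p^t}}$ up to scalar modulo $I(kG)^{w_G+1}$. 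If you want to salvage your approach, you would need both a substitute for the congruence in the regime $\wt\left(c_G^{p^t}\right)>2p^t$ and an argument that works for the given, arbitrary, complement $L$.
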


\begin{proof}
Write $|G| = p^m$ and denote by $\mathcal{J} = (g_1,\ldots,g_m)$ the chosen Jennings tuple of $G$. 
As remarked in Section \ref{sec:Ideals}, any element of $G$ can be written as $\prod_{i=1}^m g_i^{\alpha_i}$ with  $0 \leq \alpha_i \leq p-1$.
 Let $\tilde{w}$ be the weight of $\overline{c_H^{p^t}}$ and note that, by the theory of Zassenhaus ideals, cf. Section~\ref{sec:Ideals}, we can write
$$\overline{c_H^{p^t}} = \bar{a} + y \text{ where } \ a \in D_{\tilde{w}}(G), \ y \in I(kG)^{\tilde{w} + 1}. $$
Let $0 \leq \alpha_1,\ldots, \alpha_m \leq p-1$ be such that $a = \prod_{i=1}^m g_i^{\alpha_i}$. Thanks to Lemma~\ref{lemma:BasicJenningBasisFormulas} we have
 \[ \bar{a} = \sum_{i=1}^m \alpha_i \overline{g_i} + r, \]
 where the Jennings basis elements in the support of $r$ are products of at least two elements from $\overline{\mathcal{J}}$ and are linearly independent from the elements in $\overline{\mathcal{J}}$. Note that the weight of $\bar{a}$ is the smallest weight among the weights of $\overline{g_i}$'s satisfying $\alpha_i \neq 0$. As the weight of $\bar{a}$ equals $\tilde{w}$, we conclude that $\alpha_i = 0$ whenever the weight of $\overline{g_i}$ is smaller than $\tilde{w}$ and there must exist an $i$ such that $\overline{g_i}$ has weight $\tilde{w}$ and $\alpha_i \neq 0$. 
 
Observe now that $\overline{c_H^{p^t}} \in kGI(k\gamma_2(G))$, as the ideal $kGI(k\gamma_2(G))$ is canonical. It follows that,
in the expression of $\overline{c_H^{p^t}}$ as linear combination of  elements in the Jennings basis on $\cor{J}$, each support element features a factor from $\overline{\gamma_2(G)}$, cf. Section~\ref{sec:Ideals}. In particular, if the weight of $\overline{g_i}$ is $\tilde{w}$ and $\alpha_i \neq 0$, then $g_i \in \gamma_2(G)$. 

We write $\overline{c_H} \in kGI(k \gamma_2(G))$ as $\overline{c_H} = \delta_1 \overline{c_G} + \delta_2 \overline{d_G}$ for some $\delta_1, \delta_2 \in kG$. 
Since $\gamma_2(G)$ is abelian and $\overline{d_G}^p = 0$,
Lemma~\ref{lemma:BasicJenningBasisFormulas} yields elements $\varepsilon_0, \ldots, \varepsilon_{p-1} \in kG$ such that
\[  \overline{c_H^{p^t}} = \overline{c_H}^{p^t} = (\delta_1 \overline{c_G} + \delta_2 \overline{d_G})^{p^t} = \sum_{j=0}^{p-1}\varepsilon_j \overline{c_G}^{p^t-j} \overline{d_G}^j. \]
 As $d_G$ is central in $G$ and $[\overline{c_G}, kG] \subseteq kG \overline{d_G}$, the elements of $\cor{J}$ in the support of $\varepsilon_j \overline{c_G}^{p^t-j} \overline{d_G}^j$ have at least $p^t$ factors from $\{\overline{c_G}, \overline{d_G} \}$.
However, at the same time there is an element $\overline{g_i}$ in the support of $\bar{a}$ such that $g_i \in \gamma_2(G)$, i.e.\ $\overline{g_i}$ is an element from $\{\overline{c_G},...,\overline{c_G^{p^t}}, \overline{c_G^r d_G} \}$. It follows that, if $g_i = c_G^{p^t}$, then $\alpha_i \neq 0$ and in particular $\beta \neq 0$.
\end{proof}

\begin{lemma}\label{lem:Putd_HIntoL}
There exists $\delta\in k$ such that $d_H(c_H^{p^t})^\delta\equiv 1\bmod \cor{L}$.
\end{lemma}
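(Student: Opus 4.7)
The strategy is to show that working modulo $\mathcal{L}$, everything collapses to a linear condition in $\overline{c_G^{p^t}}$ which we can solve.

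First I would observe that, by \eqref{eq:c^2}, we have $\overline{c_G^{p^t}}^2 \equiv 0 \bmod \mathcal{L}$. Since $\overline{d_H} \equiv \alpha \overline{c_G^{p^t}}$ and $\overline{c_H^{p^t}} \equiv \beta \overline{c_G^{p^t}} \bmod \mathcal{L}$, it follows immediately that all the products
\[
\overline{d_H}^2, \quad \overline{c_H^{p^t}}^2, \quad \overline{d_H}\cdot\overline{c_H^{p^t}}
\]
lie in $\mathcal{L}$.

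Next, for any $\delta \in k = \F_p$, I would view $\delta$ as an integer in $\{0,1,\ldots,p-1\}$ and expand $(c_H^{p^t})^\delta = (1+\overline{c_H^{p^t}})^\delta$ via the binomial theorem. Because $\overline{c_H^{p^t}}^2 \in \mathcal{L}$, all terms of degree at least $2$ vanish modulo $\mathcal{L}$, so
\[
(c_H^{p^t})^\delta \equiv 1 + \delta\,\overline{c_H^{p^t}} \bmod \mathcal{L}.
\]
Multiplying by $d_H = 1 + \overline{d_H}$ and using $\overline{d_H}\cdot\overline{c_H^{p^t}} \equiv 0 \bmod \mathcal{L}$ gives
\[
d_H(c_H^{p^t})^\delta \equiv 1 + \overline{d_H} + \delta\,\overline{c_H^{p^t}} \equiv 1 + (\alpha + \delta\beta)\,\overline{c_G^{p^t}} \bmod \mathcal{L}.
\]

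Finally, I would note that $\overline{c_G^{p^t}} \not\equiv 0 \bmod \mathcal{L}$: by \eqref{eq:c^2} we have $\overline{c_G^{p^t}} \equiv \overline{c_G^{p^t}d_G^m} \bmod \mathcal{L}$, and the latter is non-zero modulo $\mathcal{L}$ by construction (it is, up to sign, a generator of $\langle \overline{c_G^{p^t}}\rangle$ in the decomposition $\mu_p^0(Z) = \langle \overline{c_G^{p^t}}\rangle \oplus L$). Hence the desired congruence $d_H(c_H^{p^t})^\delta \equiv 1 \bmod \mathcal{L}$ is equivalent to the single linear equation $\alpha + \delta\beta = 0$ in $k$. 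By Lemma~\ref{lemma:beta} we have $\beta \neq 0$, so we may set $\delta = -\alpha\beta^{-1}$ and we are done.

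The only potential subtlety — and where I would spend most of the write-up — is justifying the expansion $(1 + \overline{c_H^{p^t}})^\delta \equiv 1 + \delta\overline{c_H^{p^t}} \bmod \mathcal{L}$ cleanly (since $\delta$ is a field element rather than an integer) and the non-vanishing of $\overline{c_G^{p^t}}$ modulo $\mathcal{L}$; both follow directly from the setup, so no real obstacle arises beyond the invocation of Lemma~\ref{lemma:beta}.
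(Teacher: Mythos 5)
Your proposal is correct and follows essentially the same route as the paper: expand $d_H(c_H^{p^t})^\delta$ using $\overline{c_H^{p^t}}^2\equiv 0\bmod\mathcal{L}$ (from \eqref{eq:c^2} and the congruences defining $\alpha,\beta$), reduce to the coefficient $\alpha+\delta\beta$ of $\overline{c_G^{p^t}}$, and choose $\delta=-\alpha\beta^{-1}$, which is possible by Lemma~\ref{lemma:beta}. The only cosmetic difference is that the paper computes directly with $\overline{d_H(c_H^{p^t})^{\delta}}$ via Lemma~\ref{lemma:BasicJenningBasisFormulas} rather than via the binomial expansion, and it does not need your (correct but unnecessary) observation that $\overline{c_G^{p^t}}\not\equiv 0\bmod\mathcal{L}$, since only the existence of $\delta$ is required.
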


\begin{proof}
Define $\delta=-\alpha\beta^{-1}$ and observe that $\delta$ is well-defined thanks to Lemma \ref{lemma:beta}. As a consequence of \eqref{eq:c^2}, one has $\overline{c_H^{p^t}}^2 \equiv 0 \bmod \mathcal{L}$ and so we compute
\begin{align*}
\overline{d_H(c_H)^{\delta p^t}}&\equiv\overline{d_H}+\delta\overline{c_H^{p^t}}+\delta\overline{d_H}\cdot \overline{c_H^{p^t}}
\equiv\alpha\overline{c_G^{p^t}}+\delta\beta\overline{c_G^{p^t}}+\alpha\beta\delta\overline{c_G^{p^t}}^2\equiv 0 \bmod \cor{L}.
\end{align*}
Removing bars yields the claim.
\end{proof}

\noindent
For the remaining part of Section \ref{sec:2genClass3}, \underline{let} $\A$ denote the algebra $\A = kG/\mathcal{L} = kH/\mathcal{L}$. If $\overline{d_H} \notin \mathcal{L}$, then we replace $g_H$ by $g_Hh_H^\delta$ where $\delta$ is chosen as in Lemma~\ref{lem:Putd_HIntoL}. This choice has no influence on the results from this section, nonetheless it guarantees that $\overline{d_H} \in \mathcal{L}$. In particular, the images of $\overline{g_H}$ and $\overline{c_H}$ in $\A$ commute. Our strategy is to show that the ideal $[\A,\A]\A \cap \ZG(\A)$ has conflicting properties, when viewed as an ideal of $kG/\mathcal{L}$ or $kH/\mathcal{L}$, respectively. The choice of the ideal is inspired by Sandling's argument  \cite[Lemma 6.10]{San84} relying on the ideal $[kX,kX]kX \cap Z(kX)$ of $kX$ to show that the isomorphism type of $\Gamma(X)$ is an invariant of $kX$; cf.\ Section \ref{sec:invariants}(5). We point out that, under our assumptions, we have indeed that $\Gamma(\varphi_L(G)) \not\cong \Gamma(\varphi_L(H))$. Compared to \cite[Lemma~6.10]{San84}, however, we do not have a well-understood basis of the center of $\A$ resulting in more complicated calculations on our side. 

 \begin{lemma}\label{lem:Centralizerx}
 There exists $x\in I(kG)^3 \cap kGI(\gamma_2(kG))$ such that $ \varphi_L(\overline{c_H}+x) \in [\A,\A]\A\cap\ZG(\A)$ and the following hold:
 \[
[x,\overline{c_G}]\equiv[x,\overline{c_H}] \equiv [x,\overline{g_H}] \equiv 0 \bmod \mathcal{L}. 
 \]
 \end{lemma}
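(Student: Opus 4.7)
The strategy is to take $x = k\overline{c_G} - \overline{c_H}$ for a scalar $k \in \F_p^\times$ chosen so that $\overline{c_H} \equiv k\overline{c_G} \bmod I(kG)^3$; then $\overline{c_H} + x = k\overline{c_G}$ itself serves as the sought central element in $[\A,\A]\A \cap Z(\A)$. First I would verify that $\overline{c_G}$ represents such an element. Membership in $[\A,\A]\A$ is immediate from $c_G = [g_G, h_G]$ via Lemma~\ref{lemma:BasicJenningBasisFormulas}, which expresses $\overline{c_G}$ as a unit multiple of the Lie bracket $[\overline{h_G}, \overline{g_G}]$. For centrality modulo $\cor L$, the identities $[c_G, h_G] = 1$ and $[c_G, g_G] = d_G$ together with $\overline{d_G} \in \cor K \subseteq \cor L$ give $[\overline{c_G}, \overline{g_G}], [\overline{c_G}, \overline{h_G}] \in \cor L$; the Leibniz rule then propagates these commutators to all of $kG$, since $kG$ is $k$-algebra generated by $\overline{g_G}, \overline{h_G}$ and $\cor L$ is a two-sided ideal.

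To produce $k$, I would compute $D_2(G)/D_3(G)$ explicitly. Using $\gamma_2(G) = \langle c_G\rangle \times \langle d_G\rangle$, $\gamma_3(G) = \langle d_G\rangle$, and $\gamma_2(G)^p = \langle c_G^p\rangle$, one finds $\Gamma(G) = \gamma_2(G)^p\gamma_3(G)$. The established inclusions $(G^p \setminus G^{p^{t+1}})\cap \gamma_2(G) \subseteq \langle c_G^p\rangle$ and $G^{p^{t+1}} \subseteq Z(G)$ (combined with $\gamma_2(G)^{p^{t+1}} = 1$) then give $G^p \cap \gamma_2(G) \subseteq \Gamma(G)$, whence $D_2(G)/D_3(G) \cong \gamma_2(G)/\gamma_3(G)\gamma_2(G)^p \cong \Z/p$, generated by $c_G D_3(G)$. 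The parallel analysis for $H$ shows $D_2(H)/D_3(H) \cong \Z/p$ with generator $c_H D_3(H)$. Since both quotients canonically identify with the one-dimensional $\F_p$-space $H_2(kG)/I(kG)^3$ (cf.\ Section~\ref{sec:Ideals}), and the images of $\overline{c_G}, \overline{c_H}$ are both nonzero there (being of weight $2$), they are proportional, yielding the desired $k \in \F_p^\times$.

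Finally, $x = k\overline{c_G} - \overline{c_H}$ satisfies $x \in I(kG)^3$ by construction, and $x \in kGI(k\gamma_2(G))$ because $\overline{c_G} \in I(k\gamma_2(G))$ and $\overline{c_H}$ lies in the canonical ideal $kHI(k\gamma_2(H)) = kGI(k\gamma_2(G))$ (the kernel of the projection onto the maximal commutative quotient). The three commutator conditions all reduce, via centrality of $\overline{c_H} + x = k\overline{c_G}$ modulo $\cor L$, to showing that $[\overline{c_H}, \overline{c_G}]$, $[\overline{c_H}, \overline{c_H}]$, and $[\overline{c_H}, \overline{g_H}]$ lie in $\cor L$: the first follows from centrality of $\overline{c_G}$ in $\A$, the second is zero, and the third equals $(c_Hg_H)\overline{d_H}$ by Lemma~\ref{lemma:BasicJenningBasisFormulas}, which lies in $\cor L$ by the arrangement $\overline{d_H} \in \cor L$ made immediately before the lemma. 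The main technical obstacle is the dimension computation for $D_2(G)/D_3(G)$ underlying the existence of $k$; everything else is routine bookkeeping once $k$ is identified.
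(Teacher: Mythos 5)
Your proposal is correct and takes essentially the same route as the paper: there, too, one takes $x$ with $\overline{c_H}+x=\lambda\overline{c_G}$ for a scalar $\lambda\in k^*$, using that modulo $I(kG)^3$ the weight-$2$ elements of the canonical ideal $kGI(k\gamma_2(G))$ are unique up to invertible scalars, and the three bracket relations are then deduced exactly as you do from the centrality of $\varphi_L(\overline{c_G})$ together with $[\varphi_L(\overline{c_H}),\varphi_L(\overline{g_H})]=0$ (guaranteed by the arrangement $\overline{d_H}\in\mathcal{L}$). Your justification of the scalar via the one-dimensionality of $H_2(kG)/I(kG)^3\cong D_2(G)/D_3(G)$ simply makes explicit the uniqueness statement the paper asserts.
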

 
 \begin{proof}
The elements $\overline{c_G}$ and $\overline{c_H}$ are both elements of the canonical ideal $kGI(k\gamma_2(G))$ and have weight $2$; moreover, modulo $I(kG)^3$, they are, up to invertible scalars, unique with this property. It follows that there exist elements $x\in I(kG)^3 \cap kGI(\gamma_2(kG))$ and $\lambda\in k^*$ such that 
 \[
\varphi_L( \overline{c_H}+x) = \varphi_L(\lambda\overline{c_G}) \in [\A,\A]\A\cap \ZG(\A). 
 \] 
For such an $x$, we use the facts that $\varphi_L(\overline{c_G})$ is central and $[\varphi_L(\overline{c_H}),\varphi_L(\overline{g_H})]=0$ to derive that 
\[0 = [\varphi_L(x),\varphi_L(\overline{c_G})] = [\varphi_L(x), \varphi_L(\overline{c_H})] = [\varphi_L(x), \varphi_L(\overline{g_H})].\]
 \end{proof}

\begin{proof}[Proof of Theorem~\ref{th:2gen-class3-orderG3}]
We will show that the element $x$ introduced in Lemma~\ref{lem:Centralizerx} cannot exist, which will provide a contradiction to $kG = kH$.  

As a generating set of $\A$ as a vector space, we will use the image under $\varphi_L$ of the Jennings basis of $kG=kH$ obtained from the Jennings tuple of $H$. To lighten the notation, we will drop the index $H$ in our calculations. Moreover, we will abuse notation and denote the elements of $\A$ in the same way we denote elements in $kH$, e.g.\ we will write $\bar{d} = 0$.  As $x \in [\A, \A]\A$, any base element in the support of $x$ features a factor $\overline{c}$ and has weight at least $3$. Moreover, as a consequence of Lemma \ref{lem:Centralizerx}, the element $\bar{c} +x$ is central, so in particular 
\[(\bar{c}+x)^h = \bar{c}+x.\]
From the equalities $[c,h] = c^{p^t}$ and $[g,h] = c$ and Lemma~\ref{lemma:BasicJenningBasisFormulas} we derive
\begin{equation}\label{eq:conj-x}
\bar{c}^h = \bar{c} + \overline{c^{p^t}} + \bar{c}\overline{c^{p^t}}
\textup{ and }
\bar{g}^h = \bar{g} + \bar{c} + \bar{g}\bar{c},
\end{equation}
from which it follows that
\begin{equation}\label{eq:x^h-x}
x^h - x = \bar{c}-\bar{c}^h =   -\overline{c^{p^t}} - \bar{c}\overline{c^{p^t}}.
\end{equation}
We will show that \eqref{eq:x^h-x} cannot hold. We start by showing that, to this aim, we can restrict our computations to a specific subset of the support elements of $x$. We can ignore central base elements, because they do not contribute to $x^h-x$. 
Moreover, if a support element $y$ contains a factor from $\{ \bar{h}, \overline{h^p},\ldots,\overline{h^{p^t}}\}$, then, as a consequence of \eqref{eq:conj-x} and the choice of the Jennings basis, such factor will also be present in any support element of $y^h$. 
Finally, assume that $y$ is in the support of $x$ and contains a central factor from $\overline{H^{p^{t+1}}}$ which does not lie in $\overline{\gamma_2(H)}$. Then the projection of $y^h-y$ onto the linear subspace spanned by $\overline{c^{p^t}}$ and $\bar{c}\overline{c^{p^t}}$ is $0$ as this central factor cannot be eliminated by conjugation. So considering $y$ cannot influence the satisfiability of \eqref{eq:x^h-x}.

To show that $x$ cannot satisfy \eqref{eq:x^h-x}, we assume without loss of generality that each element $y$ in the support of $x$ is of the form $\bar{g}^i\bar{c}^j$, where $i$ and $j$ are non-negative integers with $j > 0$. The positivity of $j$ follows from $x$ being in $[\A,\A]\A$ and we have used the intercommutativity of $\bar{g}$ and $\bar{c}$ to simplify the notation.
Each such support element $y$ can be written as 
\[y = \overline{g}^{i_0}\overline{g^p}^{i_1}...\overline{g^{p^t}}^{i_t}\overline{c}^{j_0}\overline{c^p}^{j_1}...\overline{c^{p^t}}^{j_t} \]
where $0\leq i_0,\ldots,i_t,j_0,\ldots, j_t < p$ and
\[i_0+ pi_1 + \ldots +p^t i_t = i \text{ and } j_0 + p j_1 +\ldots +p^t j_t = j\]
are the $p$-adic expansions of $i$ and $j$.
Following this notation, we compute
\begin{equation}\label{eq:y^h}
y^h = (\overline{g} + \overline{c} +\overline{g} \ \overline{c})^{i_0}...(\overline{g^{p^t}} + \overline{c^{p^t}} + \overline{g^{p^t}} \ \overline{c^{p^t}})^{i_t}(\overline{c} + \overline{c^{p^t}} + \overline{c}\overline{c^{p^t}})^{j_0} \overline{c^p}^{j_1}...\overline{c^{p^t}}^{j_t}.  
\end{equation}
Now if a certain $y = \bar{g}^i\bar{c}^j$ is in the support of $x$ and the support of $y^h - y$ contains an element $\tilde{y}$ different from $\overline{c^{p^t}}$ and  $\bar{c}\overline{c^{p^t}}$, then there must be another support element $y'$ in $x$ such that $(y')^h-y'$ also contains $\tilde{y}$ in its support. Relying on the last observation, we will show that $y$ is not in the support of $x$ for $1\leq i,j \leq p^t-1$. Note that it is sufficient to show that no $y$ satisfying these conditions on $i$ and $j$ is an element in the support of $x$ as only in these cases $y^h-y$ can contain $\overline{c^{p^t}}$ as an element in the support and $\overline{c^{p^t}}$ is an element necessary for \eqref{eq:x^h-x} to hold. Our restrictions, in particular, imply $i_t = j_t = 0$ in \eqref{eq:y^h}.

We will prove the impossibility of any such $y$ coming up in the support of $x$ by induction on triples of the form $(\ell, j, r)$, where $i = p^\ell(sp-r)$ for some integer $s$, ordered lexicographically. Observe that the entries of such triples range over 
$$0\leq\ell\leq t-1, \quad 1\leq j \leq p^t-1, \quad 1\leq r \leq p-1.$$ 
When considering $y^h-y$, note that the elements in the support are those which can be gotten from multiplying explicitly in \eqref{eq:y^h} with the exception of the element $\bar{g}^{i_0}\ldots\overline{g^{p^t}}^{i_t}\bar{c}^{j_0}\ldots\overline{c^{p^t}}^{j_t}=y$. In particular, working with the factor $(\overline{c} + \overline{c^{p^t}} + \overline{c}\overline{c^{p^t}})^{j_0} \overline{c^p}^{j_1}...\overline{c^{p^t}}^{j_t}$ in \eqref{eq:y^h}, we derive: 

\begin{center}
{\bf Trick:} If $y^h-y$ contains an element $\bar{g}^{i'}\bar{c}^{j'}$ in its support such that $j' < j+p^t-1$, then at least one of the factors in $\bar{c}^{j'}$ is coming from the product 
$$(\overline{g} + \overline{c} +\overline{g} \ \overline{c})^{i_0}...(\overline{g^{p^t}} + \overline{c^{p^t}} + \overline{g^{p^t}} \ \overline{c^{p^t}})^{i_t}.$$
\end{center}

\noindent
Note that, if $j'$ is of the form $j' \leq j+p^\ell$ for some $\ell$ belonging to the induction range, then the Trick is always applicable. 

For the induction base, assume $(\ell,j,r)=(0,1,1)$. Then, for some $s$, we have $i=sp -1$ and the element $\overline{g}^{sp -1}\overline{c}^2$ is in the support of $y^h-y$ and $y$ is the only element with this property. Indeed, let $y' = \bar{g}^{i'}\bar{c}^{j'}$ be such that $\overline{g}^{sp -1}\overline{c}^2$ is in the support of $y'^h-y'$ and compute 
\begin{equation}\label{eq:y'}
 (y')^h = (\overline{g} + \overline{c} +\overline{g} \ \overline{c})^{i_0'}...(\overline{g^{p^t}} + \overline{c^{p^t}} + \overline{g^{p^t}} \ \overline{c^{p^t}})^{i_t'}(\overline{c} + \overline{c^{p^t}} + \overline{c}\overline{c^{p^t}})^{j_0'} \overline{c^p}^{j_1}...\overline{c^{p^t}}^{j_t'}.  
 \end{equation}
As the support of $(y')^h-y'$ contains an element featuring the factor $\bar{c}$ exactly twice, we deduce that $1 \leq j' \leq 2$. Moreover, the factor $(\overline{g^p} + \overline{c^p} +\overline{g^p} \ \overline{c^p})^{i_1'}...(\overline{g^{p^t}} + \overline{c^{p^t}} + \overline{g^{p^t}} \ \overline{c^{p^t}})^{i_t'}$ does not contribute a factor $\bar{c}$, as the smallest available factor is $\bar{c}^p$. 
The factor  $\overline{g}^{sp -1}$ can only be obtained from the subproduct $(\overline{g} + \overline{c} +\overline{g} \ \overline{c})^{i_0'}...(\overline{g^{p^t}} + \overline{c^{p^t}} + \overline{g^{p^t}} \ \overline{c^{p^t}})^{i_t'}$ and, being $(\overline{g} + \overline{c} +\overline{g} \ \overline{c})^{i_0'}$ the only candidate able to contribute a factor $\bar{c}$, the index $i'$ must equal $ps-1$. Combining the Trick with the fact that one factor $\bar{c}$ is contributed by $(\overline{c} + \overline{c^{p^t}} + \overline{c}\overline{c^{p^t}})^{j_0'} \overline{c^p}^{j_1}...\overline{c^{p^t}}^{j_t'}$, we conclude that $j'=1$.

We proceed with the induction step. Write $i = p^\ell(sp-r)$ and assume that the triples satisfying $(\ell',j',r')<(\ell,j,r)$ have been excluded. 
We claim that the element $\overline{g}^{p^\ell(sp -r)}\overline{c}^{j+p^\ell}$ is in the support of $y^h-y$ and assume that this element also appears in the support of $y'^h-y'$ for some $y'$, presented as in \eqref{eq:y'} and associated to the triple $(\ell',j',r')$. It follows from the induction hypothesis that $i_0'=\ldots=i_{\ell-1}'=0$. 
We note that, for the factor $\bar{g}^{p^\ell(ps-r)}$ to be realizable, 
it is necessary that $i_\ell' \neq 0$ and $\ell' = \ell$. The induction hypothesis yields $j' \geq j$ and the factor 
\[(\overline{g^{p^{\ell+1}}} + \overline{c^{p^{\ell+1}}} +\overline{g^{p^{\ell+1}}} \ \overline{c^{p^{\ell+1}}})^{i_{\ell+1}'}...(\overline{g^{p^t}} + \overline{c^{p^t}} + \overline{g^{p^t}} \ \overline{c^{p^t}})^{i_t'}\]
cannot contribute to the factor $\bar{c}^{j'}$ as its minimum contribution would be $\bar{c}^{p^{\ell+1}}$. On the other hand, since $j' \leq j +p^\ell$, the Trick ensures that the  product $(\overline{g} + \overline{c} +\overline{g} \ \overline{c})^{i_0'}...(\overline{g^{p^t}} + \overline{c^{p^t}} + \overline{g^{p^t}} \ \overline{c^{p^t}})^{i_t'}$ contributes the factor $\bar{c}^{j'}$. The minimum contribution is $\bar{c}^{p^\ell}$, as $i_0'=\ldots=i_{\ell-1}' = 0$. The maximal possible contribution 
being
$\bar{c}^{j+p^\ell}$, we conclude that $j = j'$.  It follows from the induction hypothesis that $r' \geq r$. In particular, to obtain the factor $\overline{g}^{p^\ell(sp -r)}$, the product $(\overline{g} + \overline{c} +\overline{g} \ \overline{c})^{i_0'}...(\overline{g^{p^t}} + \overline{c^{p^t}} + \overline{g^{p^t}} \ \overline{c^{p^t}})^{i_t'}$ has to contribute the maximal amount of possible factors $\bar{g}$ at the same time, i.e.\ contributing only $p^\ell$ times the factor $\bar{c}$. We deduce that $r' = r$.

In order to conclude the proof, we shall make sure that the elements of the form $\overline{g}^{p^\ell(sp -r)}\overline{c}^{j+p^\ell}$ used in the arguments above are not congruent to $0$ modulo $\mathcal{L}$. None of such elements is central as an element of $kG$, as $p^\ell(sp-r) < p^t$, and so they do not lie in $\mu_p^0(Z)$, if they do not lie in $\mathcal{K}$ itself. We will consider their weights to determine if they lie in $\mathcal{K}$, taking into account that an element $\overline{c^{p^\ell}}$ might have smaller weight than $\overline{c^{p^\ell}d^u}$ for some integer $u$, even these two elements are congruent modulo $\mathcal{K}$. 

Among the elements in consideration, the ones of shape $\overline{g}^{p^\ell(sp -1)}\overline{c}^{j+p^\ell}$ have biggest weight, so we will ignore the case $r > 1$.
As $p^\ell(sp-1) < p^{t+1}$, Lemma~\ref{lemma:JenningsBasisChoice} implies that 
$$\wt\left(\bar{g}^{p^\ell(sp-1)}\right) = p^\ell(sp-1).$$ 
Moreover, the weight of $\bar{c}$ being $2$, the maximal possible weight of an element $\overline{g}^{p^\ell(sp -1)}\overline{c}^{j+p^\ell}$ is realized when $\ell = t-1$ and $j = p^t-1$. We define 
\[z = \overline{g}^{p^t-p^{t-1}}\overline{c}^{p^t+p^{t-1}-1},\]
which comes up when, in the induction process, we exclude $\overline{g}^{p^t-p^{t-1}}\overline{c}^{p^t-1}$. 
We claim that $\wt(z) < 2w$. Erasing the factor $\overline{c^{p^t}}$ of weight not larger than $w$, Lemma~\ref{lemma:beta} yields the translation of the claim into $\wt\left(\overline{g}^{p^t-p^{t-1}}\overline{c}^{p^{t-1}-1}\right) < w$. As noted above, the weight of $\overline{g}^{p^t-p^{t-1}}$ is $p^t-p^{t-1}$. Let now $u$ be an integer such that, for any integer $u_0$, one has $w' = \wt\left(c^{p^{t-1}}d^u \right) \geq \wt \left(c^{p^{t-1}}d^{u_0} \right)$. It follows that $w \geq p w'$ and, the weight of $\bar{c}$ being $2$,  that $w' \geq 2p^{t-1}$. 
From the last observations we derive 
\begin{align*}
\wt\left(\overline{g}^{p^t-p^{t-1}}\overline{c}^{p^{t-1}-1}\right)&\leq \wt\left(\overline{g}^{p^t-p^{t-1}}\overline{c}^{p^{t-1}}\right)\leq \wt\left(\overline{g}^{p^t-p^{t-1}}\overline{c^{p^{t-1}}d^u}\right)=p^{t}-p^{t-1}+w' \\
& \leq p^{t}-p^{t-1}+w-(p-1)w' \leq p^{t}-p^{t-1}+w-(p-1)2p^{t-1} \\
& \leq w-(p^t -p^{t-1})<w
\end{align*}
and so the claim is proven.
\end{proof}

\begin{remark}
Theorem~\ref{th:2gen-class3-orderG3} can be used to solve the (MIP) in some cases that cannot be handled by the known group theoretical invariants. For instance, if $G = \SG(3^7,19)$ and $H$ is a group not isomorphic to $G$ but sharing all group theoretical invariants with $G$, then $H \cong \SG(3^7, 43)$. One finds that $|\gamma_3(G)| = 3$ while $|\gamma_3(H)| = 3^2$, so Theorem~\ref{th:2gen-class3-orderG3} yields $kG \not\cong kH$. Other examples of groups for which group theoretical invariants are not sufficient to solve the (MIP), but are when upgraded with Theorem~\ref{th:2gen-class3-orderG3}, are
\begin{align*}
\SG(3^7,23)&, \SG(3^7,45), \SG(3^7,27), \SG(3^7,47), \\ \SG(5^7,19)&, \SG(5^7, 21), \SG(5^7, 77), \SG(5^7, 87), \SG(5^7, 30086), \\ \SG(7^7, 23)&, \SG(7^7, 25), \SG(7^7, 111), \SG(7^7, 125), \SG(7^7, 104602).
\end{align*}
It seems like still some work will be required to understand this evidence in a generic sense. Some of the groups listed above for which the third terms in their lower central series' have the same size can only be distinguished using the so-called Roggenkamp parameter $\sum_{g^G} \log_p(|C_G(g)/\Phi(C_G(g))| )$, where the sum runs over conjugacy classes of $G$. This invariant is far from easy to compute by hand in a general situation.
\end{remark}

\section{The class of obelisks}\label{sec:Obelisks}
In computer-aided investigations of the (MIP) such as \cite{Wur93, Eic08, MM20} one is interested in determining the smallest number $m$ such that, given two non-isomorphic groups $G$ and $H$, one has 
$I(kG)/I(kG)^m \not\cong I(kH)/I(kH)^m$. 
It follows directly from the theory of Jennings that 
$$G/D_\ell(G) \cong H/D_\ell(H) \Longrightarrow I(kG)/I(kG)^\ell \cong I(kH)/I(kH)^\ell.$$ 
The following questions are based on computational evidence.

\begin{qs}\label{qs:JenningsBound}
Let $H$ be a $p$-group not isomorphic to $G$ satisfying $|H|=|G|$ and let $\ell$ be the maximal integer such that $G/D_\ell(G) \cong H/D_\ell(H)$. Are the following statements true? 
\begin{enumerate}[label=$(\arabic*)$]
\item It holds that $I(kG)/I(kG)^{2\ell+1} \not\cong I(kH)/I(kH)^{2\ell+1}$ \cite{BKRW99}.
\item If $p$ is odd and $m= \max\{\ell, 2\ell-\frac{p-1}{2}\}$, then $I(kG)/I(kG)^{m+1} \not\cong I(kH)/I(kH)^{m+1}$ \cite{MM20}.
\end{enumerate}   
\end{qs}
Question \ref{qs:JenningsBound}(1) has been answered negatively in \cite{MM20} for groups of order $2^8$. Question \ref{qs:JenningsBound}(2) is open. 
There are  very few bins of groups
of order $5^6$, which share all known group theoretical invariants and for which the upper bound given in Question \ref{qs:JenningsBound}(2) is sharp \cite[Section 3.3]{MM20}. A family that is highlighted in this process is that of the so-called  \emph{$p$-obelisks} (see Definition \ref{def:obelisk} below). These groups were first investigated in \cite{Bla61} and later in \cite{Sta17}. As a first step towards understanding the modular group algebras of $p$-obelisks and solving the (MIP) for this class of groups, we show in this section that the property of being a $p$-obelisk is an invariant of the group algebra. 
Until the end of the present section, \underline{assume} that $p>3$.

\begin{definition}\label{def:obelisk} 
A \emph{$p$-obelisk} is a finite non-abelian $p$-group $\cor{O}$ with $|\cor{O}:\gamma_2(\cor{O})| = p^2$ and $\cor{O}^p = \gamma_3(\cor{O})$.
\end{definition}

\noindent
The next theorem is the main result of this section, which will be proven after Lemma~\ref{lemma_DimSubOb}. 

\begin{theorem}\label{theo_IsOb}
Let $\mathcal{O}$ be a $p$-obelisk such that $kG \cong k\mathcal{O}$. Then $G$ is a $p$-obelisk.
\end{theorem}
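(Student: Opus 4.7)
My plan is to verify each of the three defining conditions of a $p$-obelisk for $G$: that $G$ is non-abelian, that $|G:\gamma_2(G)|=p^2$, and that $G^p=\gamma_3(G)$. The first two conditions fall out of standard invariants: non-abelianity is immediate because $kG\cong k\mathcal{O}$ is non-commutative, while invariant~(1) of Section~\ref{sec:invariants} gives $G/\gamma_2(G)\cong \mathcal{O}/\gamma_2(\mathcal{O})$; since $\mathcal{O}^p=\gamma_3(\mathcal{O})\subseteq\gamma_2(\mathcal{O})$, this quotient is elementary abelian of rank~$2$, yielding the required index and also that $G$ is $2$-generated with $G^p\subseteq\gamma_2(G)$.

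For the identity $G^p=\gamma_3(G)$, I would begin by collecting numerical information on $G$. The Jennings formula combined with $\mathcal{O}^p=\gamma_3(\mathcal{O})$ gives $D_3(\mathcal{O})=\mathcal{O}^p\gamma_3(\mathcal{O})=\gamma_3(\mathcal{O})$, so invariant~(3) produces $|D_3(G)|=|\gamma_3(\mathcal{O})|$, where $D_3(G)=G^p\gamma_3(G)$. Since $\gamma_2(\mathcal{O})^p\subseteq \mathcal{O}^p=\gamma_3(\mathcal{O})$, Sandling's invariant~(6) reduces to
\[
G/\gamma_2(G)^p\gamma_3(G)\;\cong\; \mathcal{O}/\gamma_3(\mathcal{O}),
\]
and the right-hand side is a non-abelian, $2$-generated group of class at most~$2$ and exponent~$p$ with commutator quotient $(\mathbb{Z}/p)^2$; this forces it to be the Heisenberg group $H(\mathbb{F}_p)$ of order $p^3$. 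Hence $|\gamma_2(G)^p\gamma_3(G)|=|\gamma_3(\mathcal{O})|$ and $G^p\subseteq\gamma_2(G)^p\gamma_3(G)$, which combined with the order of $D_3(G)$ gives the equality
\[
G^p\gamma_3(G)\;=\;\gamma_2(G)^p\gamma_3(G)\;=\;D_3(G).
\]

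To close the argument I would invoke invariant~(9), available since $G$ is $2$-generated: a Hall--Petresco-style computation (using $x^p\in\mathcal{O}^p=\gamma_3(\mathcal{O})$ together with $p>3$) yields $\gamma_2(\mathcal{O})^p\subseteq\gamma_4(\mathcal{O})$, so
\[
G/\gamma_2(G)^p\gamma_4(G)\;\cong\;\mathcal{O}/\gamma_4(\mathcal{O}),
\]
and the right-hand side is itself a $p$-obelisk. Translating its obelisk identity back to $G$ yields $G^p\gamma_4(G)=\gamma_3(G)\gamma_4(G)=\gamma_3(G)$, and iterating the analogous comparison at successive dimension-subgroup levels—using the equalities $D_3(G)=D_4(G)=\cdots=D_p(G)$ supplied by invariant~(3) and the restricted Lie algebra invariant~(7), which preserves the image of the $p$-th power map $L_1\to L_p$—lets one push the identity all the way down the lower central series of $G$ to conclude $G^p=\gamma_3(G)$. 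The main obstacle is exactly this last step: invariant~(9) only describes $G$ modulo $\gamma_2(G)^p\gamma_4(G)$, so moving from the class-$3$ obelisk quotient back up to $G$ itself requires a careful interplay between Sandling-type quotients, the Jennings series, and the restricted $p$-power map on $L(G)$.
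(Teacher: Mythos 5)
Your opening steps are sound and essentially agree with the paper: non-abelianity, $G/\gamma_2(G)\cong \mathcal{O}/\gamma_2(\mathcal{O})$ elementary abelian of rank $2$, the identification $D_3(G)=G^p\gamma_3(G)$ with $|D_3(G)|=|\gamma_3(\mathcal{O})|$, and the use of the Sandling quotient to see that $G/\gamma_2(G)^p\gamma_3(G)$ is the Heisenberg group (whence, after the small index argument showing $\gamma_3(G)$ has index $p$ in $\gamma_2(G)$ and hence equals $D_3(G)$, one gets $G^p\subseteq\gamma_3(G)$). Even the step via invariant $(9)$, giving that $G/\gamma_2(G)^p\gamma_4(G)$ is a $p$-obelisk and hence $G^p\gamma_4(G)=\gamma_3(G)$, can be made rigorous (you need $\gamma_2(G)^p\subseteq\gamma_3(G)$ before dividing it out on both sides, which follows from $\gamma_3(G)=D_3(G)$).

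However, there is a genuine gap, and you have named it yourself: everything after ``iterating the analogous comparison at successive dimension-subgroup levels'' is only a gesture, and this is precisely where almost all of the work lies. There is no known invariant giving quotients of the form $G/\gamma_2(G)^p\gamma_n(G)$ for $n\geq 5$, and the restricted Lie algebra $\Jen(G)$ together with invariants $(3)$, $(4)$, $(7)$ only controls dimension-subgroup quotients and the graded $p$-power map; it does not by itself identify the terms $\gamma_i(G)$ with dimension subgroups of $G$, which is what you need to turn $G^p\gamma_n(G)=\gamma_3(G)$ into $G^p=\gamma_3(G)$. The paper closes this gap by first computing explicitly, for a $p$-obelisk, which $D_n(\mathcal{O})$ equals which $\gamma_m(\mathcal{O})$ (Lemma~\ref{lemma_DimSubOb}), and then running a delicate induction proving simultaneously that $\gamma_3(G)=G^p\gamma_n(G)$ and that $\gamma_i(G)=D_j(G)$ whenever $\gamma_i(\mathcal{O})=D_j(\mathcal{O})$; the induction step shows that $G/\gamma_{n+1}(G)$ (and in one subcase $G/\gamma_{n-1}(G)^p\gamma_{2p^{(n-1)/2}}(G)$) is again a $p$-obelisk, splits into even and odd $n$, and uses Proposition~\ref{prop_ObProps} to rule out bad rank patterns. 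None of this machinery, nor a workable substitute for it, appears in your proposal, so as written the proof does not go through beyond the class-$4$ quotient.
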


\noindent
The following proposition collects known properties of $p$-obelisks; see for example \cite[Lemmas 322-3]{Sta17}
or \cite[Theorem 4.3 and its proof]{Bla61} (this is also to be found in \cite[III, Satz 17.9]{Hup67}).

\begin{proposition}\label{prop_ObProps}
Let $G$ be a $p$-obelisk of class $c$ and $i$ a positive integer. Then the following hold: 
\begin{enumerate}[label=$(\arabic*)$]
\item the quotient $\gamma_i(G)/\gamma_{i+1}(G)$ is elementary abelian of rank \[
\dim_{k}(\gamma_i(G)/\gamma_{i+1}(G))=\begin{cases}
2 & \textup{if } i\leq c-1 \textup{ and $i$ odd},\\
1 & \textup{if } i\leq c \textup{ and $i$ even}.
\end{cases}
\] 
\item one has $\gamma_i(G)^p = \gamma_{i+2}(G)$.
\end{enumerate}
\end{proposition}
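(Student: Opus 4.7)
The plan is to prove (1) and (2) by simultaneous strong induction on $i$, with the Hall--Petrescu collection formula doing most of the work. The base cases are immediate from the definition: $|G/\gamma_2(G)| = p^2$ together with $G^p = \gamma_3(G) \subseteq \gamma_2(G)$ makes $G/\gamma_2(G)$ elementary abelian of rank $2$ (handling $i=1$), and (2) at $i=1$ is the defining relation. Fixing generators $a,b$ of $G$, the quotient $\gamma_2(G)/\gamma_3(G)$ is generated by the commutator $[a,b]$, has exponent $p$ (since $\gamma_2(G)^p \subseteq G^p = \gamma_3(G)$), and is non-trivial because $G$ is non-abelian, giving the rank $1$ statement at $i=2$.

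For the inductive step, I would first establish (2). The upper inclusion $\gamma_{i+1}(G)^p \subseteq \gamma_{i+3}(G)$ comes from Hall--Petrescu: for $x,y \in \gamma_{i+1}(G)$ one has $(xy)^p \equiv x^p y^p \pmod{\gamma_2(\langle x,y \rangle)^p \, \gamma_p(\langle x,y \rangle)}$, and both error factors lie in $\gamma_{i+3}(G)$ thanks to the inductive (2) (which places $\gamma_{2(i+1)}(G)^p$ deep enough) and the hypothesis $p>3$ (which places $\gamma_p(\langle x,y\rangle) \subseteq \gamma_{p(i+1)}(G) \subseteq \gamma_{i+3}(G)$). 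For the reverse inclusion $\gamma_{i+3}(G) \subseteq \gamma_{i+1}(G)^p$, I would rewrite $\gamma_{i+3}(G) = [G, \gamma_{i+2}(G)] = [G, \gamma_i(G)^p]$ via the inductive (2) at index $i$, and use the companion identity $[g, x^p] \equiv [g,x]^p$ modulo a controlled higher-weight correction (a second consequence of Hall--Petrescu inside $\langle g, x\rangle$) to push every generator of $[G, \gamma_i(G)^p]$ into $\gamma_{i+1}(G)^p$ modulo terms already known to lie in $\gamma_{i+3}(G)$; a short descent on the class then closes the argument.

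Next I would handle (1). The commutator map provides a surjection $L_1 \otimes L_i \twoheadrightarrow L_{i+1}$, where $L_j = \gamma_j(G)/\gamma_{j+1}(G)$, and the inductive dimensions bound $\dim L_{i+1}$ from above. To pin down the exact rank I would give explicit bases: when $i+1 = 2k+1$ is odd with $i+1 \leq c-1$, iterating (2) produces $a^{p^k}, b^{p^k} \in \gamma_{2k+1}(G)$, and any $\mathbb{F}_p$-relation between their images in $L_{2k+1}$ descends, via Hall--Petrescu, to a relation between $\bar a, \bar b$ in the rank $2$ space $G/\gamma_2(G)$, a contradiction; when $i+1 = 2k$ is even with $i+1 \leq c$, the alternating property of the bracket and the commutator--power identity reduce the four naive generators coming from the surjection to the single element $[a,b]^{p^{k-1}}$, whose non-triviality in $L_{2k}$ follows from $\gamma_{2k}(G) \neq \gamma_{2k+1}(G)$ (since $2k \leq c$) together with the inductive injectivity of the $p$-power map on even-indexed layers.

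The main obstacle is the careful bookkeeping of the Hall--Petrescu corrections in both directions of (2), making sure every containment is available from the inductive hypothesis and no circularity slips in; this is precisely where the assumption $p>3$ is used, since smaller primes would let $\gamma_p(\langle x, y\rangle)$ escape from $\gamma_{i+3}(G)$. A secondary delicate point is the injectivity of the iterated $p$-th power maps $L_1 \to L_3 \to L_5 \to \cdots$ and $L_2 \to L_4 \to L_6 \to \cdots$ throughout the range of the proposition; this is where the exact equality $G^p = \gamma_3(G)$ (rather than a mere inclusion) and the non-abelianness of $G$ earn their keep, preventing the obelisk from collapsing before reaching the top of the lower central series.
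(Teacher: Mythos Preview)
The paper does not give its own proof of this proposition: it is quoted as a known result, with references to \cite[Theorem~4.3]{Bla61}, \cite[III, Satz~17.9]{Hup67}, and \cite{Sta17}. So there is nothing to compare against except the classical sources, and your Hall--Petrescu-plus-induction scheme is indeed how those sources proceed.

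Your treatment of (2) is on target. Two minor corrections. First, to place the Hall--Petrescu corrections for $(xy)^p$ with $x,y\in\gamma_{i+1}(G)$ into $\gamma_{i+3}(G)$ you do not need the inductive (2) at all: for $j\geq 2$ one has $j(i+1)\geq 2i+2\geq i+3$ once $i\geq 1$, so the weight alone suffices. Second, the genuine use of $p>3$ is not where you locate it but in the companion identity $[g,x^p]\equiv[g,x]^p$ with $g$ of weight~$1$: the $\gamma_p$-type correction there can have weight as low as $p$, and one needs $p\geq 5$ to push it past $\gamma_4(G)$ at the base of the induction.

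The real gap is in (1). Surjectivity of the $p$-power maps $L_j\to L_{j+2}$ together with the base cases gives $\dim L_{2k}\leq 1$ and $\dim L_{2k+1}\leq 2$; combined with $L_j\neq 0$ for $j\leq c$ this settles the even layers. For the odd layers you need the lower bound $\dim L_{2k+1}\geq 2$, and your proposed mechanism---that a dependence among $\overline{a^{p^k}},\overline{b^{p^k}}$ in $L_{2k+1}$ ``descends via Hall--Petrescu'' to a dependence in $L_1$---does not work as stated: Hall--Petrescu only yields $(a^\alpha b^\beta)^{p^k}\in\gamma_{2k+2}(G)$, and from $x^{p^k}\in\gamma_{2k}(G)^p$ one cannot conclude $x\in\gamma_2(G)$. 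The argument in the cited sources goes \emph{forward}, not backward: if some $g\in G\setminus\gamma_2(G)$ has $g^{p^k}\in\gamma_{2k+2}(G)$, then applying $[h,-]$ and the commutator--power identity gives $[h,g]^{p^k}\in\gamma_{2k+3}(G)$; since $\overline{[h,g]^{p^{k-1}}}$ generates the one-dimensional $L_{2k}$, this forces $L_{2k+2}=0$ and hence $c\leq 2k+1$, contradicting $2k+1\leq c-1$. Your sketch is missing this step.
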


\begin{lemma}\label{lemma_DimSubOb}
Let $\cor{O}$ be a $p$-obelisk. Let $n$ be a positive integer and write $n = ap^\ell + b$ where $\ell \geq 0$, $1 \leq a \leq p-1$ and $0 \leq b < p^{\ell}$. Then 
$D_n(\cor{O}) = \gamma_{m(n)}(\cor{O})$ where
$$ m(n) = \left\{\begin{array}{lll} 2\ell + 1, \ \ \text{if} \ a = 1 \ \text{and} \ b = 0, \\ 2\ell + 3, \ \ \text{if} \ a > 2 \ \text{or}  \ a = 2 \ \text{and} \ b \geq 1, \\ 2\ell +2, \ \ \text{otherwise} \end{array}\right. $$
\end{lemma}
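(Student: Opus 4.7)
The plan is to argue by induction on $n$, using the recursive description
\[
D_n(\cor{O}) = D_{\ceil{n/p}}(\cor{O})^p\,\gamma_n(\cor{O})
\]
supplied by Jennings' theory in Section~\ref{sec:Ideals}, together with Proposition~\ref{prop_ObProps}(2), which asserts that $\gamma_k(\cor{O})^p=\gamma_{k+2}(\cor{O})$ for every positive integer $k$. The base case $n=1$ is immediate, since $D_1(\cor{O})=\cor{O}=\gamma_1(\cor{O})$ and the formula gives $m(1)=2\cdot 0+1=1$.

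For the inductive step with $n\geq 2$, the induction hypothesis applied to $\ceil{n/p}<n$ rewrites $D_{\ceil{n/p}}(\cor{O})^p$ as $\gamma_{m(\ceil{n/p})}(\cor{O})^p = \gamma_{m(\ceil{n/p})+2}(\cor{O})$. It follows that
\[
D_n(\cor{O}) = \gamma_{m(\ceil{n/p})+2}(\cor{O})\,\gamma_n(\cor{O}) = \gamma_{\min(m(\ceil{n/p})+2,\,n)}(\cor{O}),
\]
and the whole lemma reduces to the purely arithmetic identity
\[
m(n)=\min\bigl(m(\ceil{n/p})+2,\,n\bigr),\qquad n\geq 2.
\]

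To establish this identity I would run through the cases of the decomposition $n=ap^\ell+b$. When $\ell=0$, one has $b=0$, $n=a\in\{1,\ldots,p-1\}$, $\ceil{n/p}=1$, and the minimum $\min(3,a)$ directly matches the three-level formula for $m(a)$. When $\ell\geq 1$, one uses the equality $\ceil{n/p}=ap^{\ell-1}+\ceil{b/p}$ and checks, following the three-way split in the definition of $m$, that $m(\ceil{n/p})+2$ computes to the claimed value of $m(n)$; the hypothesis $p>3$ ensures that $n\geq p^\ell$ dominates the bound $m(\ceil{n/p})+2\leq 2\ell+3$, so the minimum is realised by the first argument as soon as $\ell\geq 1$.

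The main obstacle I anticipate is the bookkeeping at the boundary $\ceil{b/p}=p^{\ell-1}$, which occurs when $b\in\{p^\ell-p+1,\ldots,p^\ell-1\}$. In such cases $\ceil{n/p}=(a+1)p^{\ell-1}$, and if additionally $a=p-1$ the canonical form of $\ceil{n/p}$ becomes $1\cdot p^\ell+0$, shifting $\ell$ by one in the recursion. A short verification nevertheless confirms that the three-level formula defining $m$ is robust against these boundary shifts: its value depends only on the top two base-$p$ digits of its argument through a coarse three-level function, and $p\geq 5$ prevents any spurious carry from producing a digit larger than $p-1$ outside of the already handled boundary cases.
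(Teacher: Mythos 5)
Your proposal is correct and follows essentially the same route as the paper: induction on $n$ via the recursion $D_n(\cor{O})=D_{\ceil{n/p}}(\cor{O})^p\gamma_n(\cor{O})$ combined with Proposition~\ref{prop_ObProps}(2), the only difference being that you package the case analysis into the single arithmetic identity $m(n)=\min\bigl(m(\ceil{n/p})+2,\,n\bigr)$ and spell out the carry/boundary cases ($\ceil{b/p}=p^{\ell-1}$, $a=p-1$) that the paper treats tersely, while the paper instead verifies $n=1,2,3$ as base cases directly from $\cor{O}^p=\gamma_3(\cor{O})$. (Your aside that $m$ depends only on the top two base-$p$ digits is slightly imprecise -- it depends on the leading digit and on whether $b=0$ -- but this does not affect the verification you outline.)
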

\begin{proof}
We work by induction on $n$ and using the iterative formula for dimension subgroups. We clearly have $D_1(\cor{O}) = \gamma_1(\cor{O})$. Moreover, from $p \geq 5$ we compute $D_2(\cor{O}) = \gamma_2(\cor{O})\cor{O}^p$ and $D_3(\cor{O}) = \gamma_3(\cor{O}) \cor{O}^p$. The group $\cor{O}$ being a $p$-obelisk, we deduce $D_2(\cor{O}) = \gamma_2(\cor{O})$ and $D_3(\cor{O}) = \gamma_3(\cor{O})$. We have proven the claim when $n$ equals $1$, $2$ or $3$.

Assume now that $n \geq 4$ and that, for each $n'<n$, the equality $D_{n'}(\cor{O})=\gamma_{m(n')}(\cor{O})$ holds. We write 
\[
n = ap^\ell + b \textup{ with } \ell \geq 0, \ 1 \leq a \leq p-1, \ 0 \leq b < p^{\ell}.
\]
Let first $a = 1$ and $b = 0$; then $n\geq 4$ implies $\ell\neq 0$ and, in particular, we have $2\ell +1 \leq p^\ell=n$. Note that, by Proposition~\ref{prop_ObProps}(2), we have $\gamma_{2\ell-1}(\cor{O})^p = \gamma_{2\ell+1}(\cor{O})$. Thanks to the inductive formula for the dimension subgroups, cf. Section~\ref{sec:Ideals}, the induction hypothesis yields
$$
D_n(\cor{O}) = D_{ap^{\ell-1}}(\cor{O})^p \gamma_n(\cor{O}) = \gamma_{2(\ell-1)+1}(\cor{O})^p \gamma_n(\cor{O}) = \gamma_{2\ell + 1}(\cor{O}).
$$
Now let $a > 2$ or $a = 2$ and $b \geq 1$. The inductive formula for the dimension subgroups combined with Proposition~\ref{prop_ObProps} and the fact that $n\geq 4$ yields
$$D_n(\cor{O}) = D_{ap^{\ell-1} + \ceil{\frac{b}{p}} }(\cor{O})^p \gamma_n(\cor{O}) = \gamma_{2(\ell-1)+3}(\cor{O})^p \gamma_n(\cor{O}) = \gamma_{2\ell + 3}(\cor{O}).$$
We next assume that $a = 1$ and $b \geq 1$ or $a = 2$ and $b = 0$. The combination of Proposition~\ref{prop_ObProps}, the induction hypothesis, and the fact that $p\geq 5$ yields
$$D_n(\cor{O}) = D_{ap^{\ell-1} + \ceil{\frac{b}{p}}}(\cor{O})^p \gamma_n(\cor{O}) = \gamma_{2(\ell-1)+2}(\cor{O})^p \gamma_n(\cor{O}) = \gamma_{2\ell + 2}(\cor{O}).$$
\end{proof}

\noindent
We give here the proof of Theorem \ref{theo_IsOb}. To this end and until the end of the proof, let $\mathcal{O}$ be a $p$-obelisk such that $kG \cong k\mathcal{O}$. We show that $G$ is a $p$-obelisk.

The (MIP) is solved for orders dividing $p^5$ \cite{SS96p5} so
we assume without loss of generality that $|G| = |\mathcal{O}| \geq p^6$. Then, by Proposition \ref{prop_ObProps}(1), the nilpotency class of $\mathcal{O}$ is at least 4.
From Section~\ref{sec:invariants} we know that $G$ and $\mathcal{O}$ have the same order and also that $G/\gamma_2(G) \cong \mathcal{O}/\gamma_2(\mathcal{O})$; the last quotients are elementary abelian of order $p^2$ by Proposition \ref{prop_ObProps}(1). 
In particular $\gamma_2(G)/\gamma_3(G)$ and $\gamma_2(\cor{O})/\gamma_3(\cor{O})$ are cyclic 
of order $p$
and hence $|\gamma_2(G):\gamma_3(G)| = |\gamma_2(\mathcal{O}):\gamma_3(\mathcal{O})|$. 
Now, by Section~\ref{sec:invariants}(4) and Lemma~\ref{lemma_DimSubOb} we have 
\[D_1(G)/D_2(G) \cong D_1(\mathcal{O})/D_2(\mathcal{O}) = \mathcal{O}/\gamma_2(\mathcal{O}).\]
From $D_2(G) = \gamma_2(G)G^p$ we then obtain $G^p \leq \gamma_2(G)$ and $D_2(G)=\gamma_2(G)$. In the same way we get 
$$D_2(G)/D_3(G) \cong D_2(\mathcal{O})/D_3(\mathcal{O}) = \gamma_2(\mathcal{O})/\gamma_3(\mathcal{O}). $$
Both $D_3(G) = \gamma_3(G)G^p$ and $\gamma_3(G)$ being subgroups of index $p$ of $D_2(G) = \gamma_2(G)$,  we get $G^p \subseteq \gamma_3(G)$ and, in particular, that $D_3(G)=\gamma_3(G)$. 
We claim that, for each integer $n\geq 3$, the following holds:
\begin{itemize}
\item[(i)] $\gamma_3(G) = G^p\gamma_n(G)$
\item[(ii)] $\gamma_i(G) = D_j(G)$ for all $1 \leq i \leq n$ and any $j$ satisfying $\gamma_i(\mathcal{O}) = D_j(\mathcal{O})$.
\end{itemize}
To prove the last claim, we will work by induction on $n$. For \emph{this proof only}, we will use the bar notation with a different meaning from that introduced in Section \ref{sec:alg not}.

Assume $n = 3$. We have already proven (i), so we show (ii). By Lemma~\ref{lemma_DimSubOb}, one has $\gamma_3(\mathcal{O}) = D_j(\mathcal{O})$ if and only if $3 \leq j \leq p$. When $i\in\graffe{1,\ldots,p}$, we get from Section~\ref{sec:invariants}(4) that 
\[D_i(\mathcal{O})/D_{i+1}(\mathcal{O}) \cong D_i(G)/D_{i+1}(G)\]
and so $\gamma_3(G) = D_j(G)$ if and only if $3 \leq j \leq p$. This proves the base case. We now assume that the claim holds for $n\geq 3$ and prove it for $n+1$. 

Assume that $n$ is even. 
Set $\overline{G} = G/\gamma_{n+1}(G)$. We claim that $\overline{G}$ is a $p$-obelisk. Indeed, applying in this order the induction hypothesis on (i), then on (ii), the formula for dimension subgroups, and then again (ii), one gets 
$$\gamma_3(\overline{G}) = \overline{G}^p\gamma_n(\overline{G}) = \overline{G}^p D_{2p^{\frac{n-2}{2}}}(\overline{G}) = \overline{G}^p (D_{2p^{\frac{n-4}{2}}}(\overline{G}))^p \gamma_{2p^{\frac{n-2}{2}}}(\overline{G}) = \overline{G}^p \gamma_{n-2}(\overline{G})^p = \overline{G}^p. $$
We have proven that $\overline{G}$ is a $p$-obelisk, equivalently $\gamma_3(G) = G^p\gamma_{n+1}(G)$ and (i) holds.
We continue with (ii). By Lemma~\ref{lemma_DimSubOb} ,we have $D_{p^{\frac{n}{2}}}(\mathcal{O}) = \gamma_{n+1}(\mathcal{O})$ and so the induction hypothesis on (ii) yields
$$ D_{p^{\frac{n}{2}}}(G) = D_{p^{\frac{n-2}{2}}}(G)^p \gamma_{p^{\frac{n}{2}}}(G) = \gamma_{n-1}(G)^p \gamma_{p^{\frac{n}{2}}}(G). $$
Note that, by Lemma~\ref{lemma_DimSubOb}, we have that $\gamma_n(\mathcal{O}) = D_{2p^{\frac{n}{2}-1}}(\mathcal{O})$ and, for each $2p^{\frac{n}{2}-1} +1 \leq j \leq p^\frac{n}{2}$, that $\gamma_{n+1}(\mathcal{O}) = D_j(\mathcal{O})$. It follows from the combination of Section~\ref{sec:invariants}(3) with the induction hypothesis on (ii) that $\gamma_n(G) = D_{2p^{\frac{n}{2}-1}}(G)$ and, for each $2p^{\frac{n}{2}-1} +1 \leq j,j' \leq p^\frac{n}{2}$, that $D_j(G) = D_{j'}(G)$. 
Hence we get
\begin{align}\label{eq_evenCase}
\gamma_n(G)/\gamma_{n-1}(G)^p \gamma_{p^\frac{n}{2}}(G) = D_{2p^{\frac{n}{2}-1}}(G)/D_{p^{\frac{n}{2}}}(G) \cong D_{2p^{\frac{n}{2}-1}}(\mathcal{O})/D_{p^{\frac{n}{2}}}(\mathcal{O}) =  \gamma_n(\mathcal{O})/\gamma_{n+1}(\mathcal{O}) 
\end{align}
with $|\gamma_n(\mathcal{O}):\gamma_{n+1}(\mathcal{O})|\leq p$.
As $\overline{G}$ is an obelisk, Proposition~\ref{prop_ObProps} yields $\gamma_{n-1}(\overline{G})^p \subseteq \gamma_{n+1}(\overline{G}) = 1$ from which it follows that $\gamma_{n-1}(G)^p \gamma_{p^{\frac{n}{2}}}(G) \subseteq \gamma_{n+1}(G) \subseteq \gamma_n(G)$. As a consequence of \eqref{eq_evenCase} we have that $|\gamma_n(G):\gamma_{n-1}(G)^p \gamma_{p^{\frac{n}{2}}}(G)| \leq p$ and so either $\gamma_n(G) = \gamma_{n+1}(G)$ or $\gamma_{n+1}(G) = \gamma_{n-1}(G)^p \gamma_{p^{\frac{n}{2}}}(G)$. If $\gamma_n(G) = \gamma_{n+1}(G)$, then $\gamma_n(G) = 1$ and, as by induction $\gamma_3(G) = G^p\gamma_n(G)$, we obtain that $G$ is an obelisk. Assume, on the other hand that $\gamma_{n-1}(G)^p \gamma_{p^{\frac{n}{2}}}(G) = \gamma_{n+1}(G)$. Then $\gamma_{n+1}(G) = D_{p^\frac{n}{2}}(G)$ together with the reasoning above implies (ii) for $n+1$. 

Assume now that $n$ is odd. 
Set $\overline{G} = G/\gamma_{n+1}(G)$, which we can show to be an obelisk via computing
\begin{align}\label{eq_Gbarob}
\gamma_3(\overline{G}) = \overline{G}^p\gamma_n(\overline{G}) = \overline{G}^p D_{p^{\frac{n-1}{2}}}(\overline{G}) = \overline{G}^p (D_{p^{\frac{n-3}{2}}}(\overline{G}))^p \gamma_{p^{\frac{n-1}{2}}}(\overline{G}) = \overline{G}^p \gamma_{n-2}(\overline{G})^p = \overline{G}^p. 
\end{align}
In particular the equality $\gamma_3(G) = G^p \gamma_{n+1}(G)$ holds.
Now, by Lemma~\ref{lemma_DimSubOb} we have $D_{2p^{\frac{n-1}{2}}}(\mathcal{O}) = \gamma_{n+1}(\mathcal{O})$. Then, by the iterative formula for dimension subgroups and induction hypothesis on (ii), we have
$$D_{2p^{\frac{n-1}{2}}}(G) = D_{2p^{\frac{n-3}{2}}}(G)^p \gamma_{2p^{\frac{n-1}{2}}}(G) = \gamma_{n-1}(G)^p \gamma_{2p^{\frac{n-1}{2}}}(G). $$ 
It follows from Lemma~\ref{lemma_DimSubOb} that $\gamma_n(\mathcal{O}) = D_{p^{\frac{n-1}{2}}}(\mathcal{O})$ and, for each $p^{\frac{n-1}{2}} +1 \leq j \leq 2p^\frac{n-1}{2}$, that $\gamma_{n+1}(\mathcal{O}) = D_j(\mathcal{O})$. Section~\ref{sec:invariants}(4) and the induction hypothesis on (ii) then yield that $\gamma_n(G) = D_{p^{\frac{n-1}{2}}}(G)$ and, for each $p^{\frac{n-1}{2}} +1 \leq j, j' \leq 2p^\frac{n-1}{2}$, that  $D_j(G) = D_{j'}(G)$. 
The following hold:
\begin{align}\label{eq_oddCase}
\gamma_n(G)/\gamma_{n-1}(G)^p \gamma_{2p^\frac{n-1}{2}}(G) = D_{p^{\frac{n-1}{2}}}(G)/D_{2p^{\frac{n-1}{2}}}(G) \cong D_{p^{\frac{n-1}{2}}}(\mathcal{O})/D_{2p^{\frac{n-1}{2}}}(\mathcal{O}) =  \gamma_n(\mathcal{O})/\gamma_{n+1}(\mathcal{O}) 
\end{align}
where $\gamma_n(\mathcal{O})/\gamma_{n+1}(\mathcal{O})$ is elementary abelian of rank at most $2$.
Since $\overline{G}$ is a $p$-obelisk, we have $\gamma_{n-1}(\overline{G})^p = \gamma_{n+1}(\overline{G}) = 1$ and so $\gamma_{n-1}(G)^p \gamma_{2p^{\frac{n-1}{2}}}(G) \subseteq \gamma_{n+1}(G) \subseteq \gamma_n(G)$.
If $\gamma_n(G) = \gamma_{n+1}(G)$, then $\gamma_3(G) = G^p$ and $G$ is a $p$-obelisk. If $p = |\gamma_n(G):\gamma_{n+1}(G)| = |\gamma_n(G):\gamma_{n-1}(G)^p \gamma_{2p^{\frac{n-1}{2}}}(G)|$, then $|\gamma_n(\mathcal{O}):\gamma_{n+1}(\mathcal{O})| = p$ which, together with Proposition~\ref{prop_ObProps}, implies $\gamma_{n+1}(\mathcal{O}) = 1$. As $G$ and $\mathcal{O}$ have the same order, we get that $\gamma_{n+1}(G) = 1$ and the equality $\gamma_3(G) = G^p \gamma_{n+1}(G) = G^p$ yields that $G$ is a $p$-obelisk.
Assume now that $|\gamma_n(G): \gamma_{n-1}(G)^p \gamma_{2p^{\frac{n-1}{2}}}(G)| = p^2$. If also $|\gamma_n(G): \gamma_{n+1}(G)| = p^2$ then $D_{2p^\frac{n-1}{2}}(G) = \gamma_{n-1}(G)^p \gamma_{2p^{\frac{n-1}{2}}}(G) = \gamma_{n+1}(G)$ and (ii) follows from a similar argument as before. So the only possibility left is $|\gamma_n(G):\gamma_{n+1}(G)| = p$. Set $\overline{G} = G/\gamma_{n-1}(G)^p \gamma_{2p^{\frac{n-1}{2}}}(G)$. As $\gamma_{n+1}(\overline{G})$ has order $p$, we get $\gamma_{n+2}(\overline{G}) = 1$. Moreover, for each $p \geq 5$ and $n \geq 3$, we have $n +2 \leq p^{\frac{n-1}{2}}$  and so, imitating the arguments in \eqref{eq_Gbarob}, we get that $\gamma_3(\overline{G}) = \overline{G}^p \gamma_n(\overline{G}) = \overline{G}^p$. In particular $\overline{G}$ is a $p$-obelisk satisfying $p = |\gamma_{n-1}(\overline{G}):\gamma_{n}(\overline{G})| = |\gamma_{n}(\overline{G}):\gamma_{n+1}(\overline{G})| = |\gamma_{n+1}(\overline{G}):\gamma_{n+2}(\overline{G})|$; contradiction to Proposition \ref{prop_ObProps}(1). The claim is now proven.

We conclude by observing that, when $n$ is such that $\gamma_n(G) = 1$, then $(\mathrm{i})$ yields $\gamma_3(G)=G^p$ and, since such an $n$ always exists, we are done.
\qed
\vspace{8pt}\\
\noindent
Within the family of $p$-obelisks, there is a natural separation that is dictated by the collection of sizes of minimal generating sets of maximal subgroups. 

\begin{definition}
Let $\cor{O}$ be a $p$-obelisk. Then $\cor{O}$ is \emph{framed} if all maximal subgroups of $\cor{O}$ are $2$-generated.
\end{definition}

\noindent
Though not referred to with the name ``framed $p$-obelisks'', these groups are the reason Blackburn became interested in $p$-obelisks in the first place \cite[Theorem 4.2]{Bla61}. However, not all obelisks are framed \cite[Lemma 437]{Sta17}
and in non-framed obelisk there are exactly $2$ maximal subgroups that are not $2$-generated (but $3$-generated) \cite[Lemma 335]{Sta17}. The following result \cite[Proposition 336]{Sta17}
makes it clear how the restricted $k$-Lie algebra of an obelisk determines whether the obelisk is framed or not.

\begin{lemma}\label{lemma:framed}
Let $\cor{O}$ be a $p$-obelisk. The following are equivalent:
\begin{enumerate}[label=$(\arabic*)$]
\item the obelisk $\cor{O}$ is framed, 
\item for each maximal subgroup $M$ of $\cor{O}$, the images of $M^p$ and $[M,M]$ in $\gamma_3(\cor{O})/\gamma_4(\cor{O})$ are distinct subgroups of order $p$. 
\end{enumerate}
\end{lemma}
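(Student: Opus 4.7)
The plan is to establish the equivalence pointwise: for every maximal subgroup $M$ of $\cor{O}$, I will show that $M$ is $2$-generated if and only if the images of $M^p$ and $[M,M]$ in $\overline{\gamma_3(\cor{O})}:=\gamma_3(\cor{O})/\gamma_4(\cor{O})$ are distinct subgroups of order $p$. Quantifying this over $M$ then yields the lemma, since being framed is by definition the requirement that every maximal subgroup of $\cor{O}$ be $2$-generated.

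First I would describe the maximal subgroups via the obelisk structure. The defining relations $|\cor{O}:\gamma_2(\cor{O})|=p^2$ and $\cor{O}^p=\gamma_3(\cor{O})\subseteq\gamma_2(\cor{O})$ give $\Phi(\cor{O})=\gamma_2(\cor{O})$, so every maximal subgroup of $\cor{O}$ is of the form $M=\gen{x}\gamma_2(\cor{O})$ with $x\notin\gamma_2(\cor{O})$. Since $\gamma_2(\cor{O})\subseteq M$, a straightforward application of Lemma~\ref{lemma:comm-formulas} gives $[M,M]\subseteq\gamma_3(\cor{O})$, and the obelisk identity $\cor{O}^p=\gamma_3(\cor{O})$ yields $M^p\subseteq\gamma_3(\cor{O})$; hence $\Phi(M)\subseteq\gamma_3(\cor{O})$. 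A comparison of orders, namely $|M|=|\cor{O}|/p=p^2|\gamma_3(\cor{O})|$, then shows that $M$ is $2$-generated if and only if $\Phi(M)=\gamma_3(\cor{O})$.

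The next step is to pass modulo $\gamma_4(\cor{O})$. By Proposition~\ref{prop_ObProps}(2) one has $\gamma_4(\cor{O})=\gamma_2(\cor{O})^p\subseteq M^p\subseteq\Phi(M)$, so the condition $\Phi(M)=\gamma_3(\cor{O})$ is equivalent to $\overline{\Phi(M)}=\overline{\gamma_3(\cor{O})}$; by Proposition~\ref{prop_ObProps}(1), the target $\overline{\gamma_3(\cor{O})}$ is elementary abelian of rank $2$, so what is needed is that the images $\overline{M^p}$ and $\overline{[M,M]}$ together generate $\overline{\gamma_3(\cor{O})}\cong\F_p^2$.

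To finish, I would show that each of these images is cyclic. Fix $c\in\gamma_2(\cor{O})$ whose class generates the cyclic quotient $\gamma_2(\cor{O})/\gamma_3(\cor{O})$; Lemma~\ref{lemma:comm-formulas} combined with $[\cor{O},\gamma_3(\cor{O})]\subseteq\gamma_4(\cor{O})$ makes $u\mapsto\overline{[x,u]}$ a homomorphism $\gamma_2(\cor{O})\to\overline{\gamma_3(\cor{O})}$ with image $\overline{\gen{[x,c]}}$, yielding $\overline{[M,M]}=\overline{\gen{[x,c]}}$. For $\overline{M^p}$ I would apply the Hall--Petresco expansion to $(x^iu)^p$ inside the class-$3$ quotient $\cor{O}/\gamma_4(\cor{O})$: the factor $u^p$ lies in $\gamma_2(\cor{O})^p=\gamma_4(\cor{O})$; the weight-$2$ correction $[u,x^i]^{\binom{p}{2}}$ vanishes since $p\mid\binom{p}{2}$ and $\gamma_3(\cor{O})^p=\gamma_5(\cor{O})\subseteq\gamma_4(\cor{O})$; and the remaining weight-$3$ corrections are powers $c^{\binom{p}{3}}$ of elements $c\in\gamma_3(\cor{O})$, with $\binom{p}{3}$ divisible by $p$ precisely thanks to the running assumption $p>3$, so these too lie in $\gamma_3(\cor{O})^p\subseteq\gamma_4(\cor{O})$. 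Hence $(x^iu)^p\equiv x^{ip}\pmod{\gamma_4(\cor{O})}$, giving $\overline{M^p}=\overline{\gen{x^p}}$. Since both $\overline{\gen{x^p}}$ and $\overline{\gen{[x,c]}}$ are cyclic of order at most $p$, they span the two-dimensional $\F_p$-space $\overline{\gamma_3(\cor{O})}$ exactly when both are of order $p$ and are distinct, which is precisely condition~(2). The step that will require the most care is the Hall--Petresco calculation: one must isolate the vanishing of the weight-$3$ corrections cleanly, and this is the exact point where the hypothesis $p>3$ enters in an essential way.
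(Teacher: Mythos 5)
Your argument is necessarily on a different footing from the paper's, because the paper does not prove this lemma at all: it is quoted from \cite[Proposition 336]{Sta17}. Judged on its own, your proof is essentially correct and follows the natural route: $\Phi(\cor{O})=\gamma_2(\cor{O})$ gives $M=\gen{x}\gamma_2(\cor{O})$; then $\gamma_4(\cor{O})=\gamma_2(\cor{O})^p\subseteq M^p\subseteq\Phi(M)\subseteq\gamma_3(\cor{O})$ together with $|M:\gamma_3(\cor{O})|=p^2$ reduces ``$M$ is $2$-generated'' to ``$\overline{M^p}\,\overline{[M,M]}$ fills $\gamma_3(\cor{O})/\gamma_4(\cor{O})$'', and your identifications $\overline{M^p}=\gen{\overline{x^p}}$, $\overline{[M,M]}=\gen{\overline{[x,c]}}$ turn this into exactly condition (2), pointwise in $M$. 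Two small technical glosses: for $\overline{[M,M]}$ you should add the (routine) reduction of a general commutator, $[x^iu,x^jv]\equiv[x,v]^i[x,u]^{-j}\bmod\gamma_4(\cor{O})$ via Lemma~\ref{lemma:comm-formulas}, $[\gamma_2(\cor{O}),\gamma_2(\cor{O})]\subseteq\gamma_4(\cor{O})$ and $[\gamma_3(\cor{O}),\cor{O}]=\gamma_4(\cor{O})$, since your homomorphism only accounts for commutators of the form $[x,u]$; and in the Hall--Petrescu step the weight-$\geq 3$ corrections already lie in $\gamma_3(\gen{x^i,u})\subseteq[\gamma_3(\cor{O}),\cor{O}]=\gamma_4(\cor{O})$, so they vanish for free -- only $p$ odd is needed (for $c_2^{\binom{p}{2}}$), and your claim that $p>3$ enters ``in an essential way'' there is not accurate, though harmless given the standing hypothesis of the section.

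The one genuine gap relative to the statement as written is your appeal to Proposition~\ref{prop_ObProps}(1) to get that $\gamma_3(\cor{O})/\gamma_4(\cor{O})$ has rank $2$: that case of the proposition requires $3\leq c-1$, i.e.\ nilpotency class at least $4$. For obelisks of smaller class (which Definition~\ref{def:obelisk} does not exclude and which have order at most $p^5$ -- e.g.\ the extraspecial group of order $p^3$ and exponent $p$ is framed, yet $\gamma_3/\gamma_4$ is trivial, so condition (2) fails vacuously) your rank-$2$ step, and indeed the literal equivalence, breaks down; the hypothesis is implicit in the cited source and is harmless in the paper's use of the lemma (after Theorem~\ref{theo_IsOb} one may assume $|\cor{O}|\geq p^6$, which by Proposition~\ref{prop_ObProps}(1) forces class at least $4$), but you should state explicitly where you use $|\gamma_3(\cor{O}):\gamma_4(\cor{O})|=p^2$ and under what assumption it holds.
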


\begin{theorem}\label{theo_FNF}
Let $\mathcal{O}$ be a framed $p$-obelisk such that $kG \cong k\mathcal{O}$. Then $G$ is a framed $p$-obelisk.
\end{theorem}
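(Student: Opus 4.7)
The plan is to reduce the claim to an isomorphism of sufficiently small quotients. By Theorem~\ref{theo_IsOb}, we already know $G$ is a $p$-obelisk, so the only thing to check is the framed property. The strategy is: first, to show that the isomorphism type of $G/\gamma_4(G)$ is an invariant of $kG$; second, to observe that being framed, in the sense of Lemma~\ref{lemma:framed}, depends only on this class-$3$ quotient. The conclusion will then be immediate from $G/\gamma_4(G)\cong\mathcal{O}/\gamma_4(\mathcal{O})$.

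For the first step, I would use that any $p$-obelisk $G$ satisfies $|G:\gamma_2(G)|=p^2$ and so is $2$-generated. This allows invoking the invariant from Section~\ref{sec:invariants}(9): when $G$ is $2$-generated, the isomorphism type of $G/\gamma_2(G)^p\gamma_4(G)$ is determined by the modular group algebra. For a $p$-obelisk, Proposition~\ref{prop_ObProps}(2) gives $\gamma_2(G)^p = \gamma_4(G)$, so that
\[
G/\gamma_4(G) \;\cong\; G/\gamma_2(G)^p\gamma_4(G) \;\cong\; \mathcal{O}/\gamma_2(\mathcal{O})^p\gamma_4(\mathcal{O}) \;=\; \mathcal{O}/\gamma_4(\mathcal{O}).
\]

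For the second step, let $\mathcal{P}$ be any $p$-obelisk and set $\bar{\mathcal{P}}=\mathcal{P}/\gamma_4(\mathcal{P})$. Any maximal subgroup $M$ of $\mathcal{P}$ contains $\gamma_2(\mathcal{P})$, so $M=\langle x,\gamma_2(\mathcal{P})\rangle$ for some $x$; from $[x,\gamma_2(\mathcal{P})]\subseteq \gamma_3(\mathcal{P})$, $[\gamma_2(\mathcal{P}),\gamma_2(\mathcal{P})]\subseteq \gamma_4(\mathcal{P})\subseteq \gamma_3(\mathcal{P})$, and $M^p\subseteq \mathcal{P}^p=\gamma_3(\mathcal{P})$, both $[M,M]$ and $M^p$ lie in $\gamma_3(\mathcal{P})$. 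The map $M\mapsto M/\gamma_4(\mathcal{P})$ gives a bijection between maximal subgroups of $\mathcal{P}$ and those of $\bar{\mathcal{P}}$, and identifies the images of $M^p$ and $[M,M]$ in $\gamma_3(\mathcal{P})/\gamma_4(\mathcal{P})$ with the corresponding $p$-th power and commutator subgroups sitting inside $\gamma_3(\bar{\mathcal{P}})$. Since $\bar{\mathcal{P}}$ is again a $p$-obelisk (easy check: $\bar{\mathcal{P}}^p=\gamma_3(\mathcal{P})/\gamma_4(\mathcal{P})=\gamma_3(\bar{\mathcal{P}})$ and $|\bar{\mathcal{P}}:\gamma_2(\bar{\mathcal{P}})|=p^2$), Lemma~\ref{lemma:framed} applies both to $\mathcal{P}$ and to $\bar{\mathcal{P}}$ and yields exactly the same condition; thus $\mathcal{P}$ is framed if and only if $\bar{\mathcal{P}}$ is. Combining with the first step, $G/\gamma_4(G)\cong \mathcal{O}/\gamma_4(\mathcal{O})$ is framed because $\mathcal{O}$ is, and therefore $G$ is framed.

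The only delicate point, and hence the main place to be careful, is verifying that the Lemma~\ref{lemma:framed} criterion genuinely descends to $\mathcal{P}/\gamma_4(\mathcal{P})$: one must confirm that the two subgroups of $\gamma_3(\mathcal{P})/\gamma_4(\mathcal{P})$ arising in the lemma coincide with the analogous subgroups computed in $\bar{\mathcal{P}}$, which is where the inclusions $[M,M], M^p\subseteq \gamma_3(\mathcal{P})$ observed above come in. Apart from this, the argument is essentially a combination of Theorem~\ref{theo_IsOb}, the invariant of Section~\ref{sec:invariants}(9), and the identity $\gamma_2^p=\gamma_4$ characteristic of $p$-obelisks.
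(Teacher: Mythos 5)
Your argument is correct, but it follows a genuinely different route from the paper. The paper's proof also starts from Theorem~\ref{theo_IsOb}, but then combines Lemma~\ref{lemma:framed} with the invariance of the restricted Lie algebra $\Jen(G)$ (Section~\ref{sec:invariants}(7)), using Lemma~\ref{lemma_DimSubOb} to identify $\gamma_3/\gamma_4$ with $D_p/D_{p+1}$; the framed criterion is then read off from the Lie-theoretic data, with the translation between maximal subgroups and hyperplanes of $D_1(G)/D_2(G)$ left implicit. You instead observe that a $p$-obelisk is $2$-generated and invoke the invariant of Section~\ref{sec:invariants}(9), which together with $\gamma_2(G)^p=\gamma_4(G)$ (Proposition~\ref{prop_ObProps}(2)) gives the stronger intermediate statement $G/\gamma_4(G)\cong \mathcal{O}/\gamma_4(\mathcal{O})$ as groups, and then you check that the criterion of Lemma~\ref{lemma:framed} is visible in this class-$3$ quotient, your verification that $[M,M]$ and $M^p$ lie in $\gamma_3$ and map onto the corresponding subgroups computed in $\bar{\mathcal P}$ being the right point to nail down. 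Your route buys a sharper invariant ($G/\gamma_4(G)$ itself) and avoids unpacking how $\Jen(G)$ encodes the relevant subgroups, at the price of relying on the more recent \cite{MM20} invariant and of applying Lemma~\ref{lemma:framed} to the class-$3$ obelisk $\bar{\mathcal P}$, which is a mild extrapolation of how the lemma is used in the paper; you could sidestep the latter by applying the lemma only to $G$ and $\mathcal{O}$ themselves and merely noting that condition (2) of the lemma is determined by the isomorphism type of the quotient modulo $\gamma_4$, exactly as your computation shows. (As in the paper's own argument, the degenerate cases of order at most $p^5$, where the quotient could have class below $3$, are harmless since the (MIP) is known there.)
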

\begin{proof}
Thanks to Theorem \ref{theo_IsOb}, the group $G$ is an obelisk if and only if $\mathcal{O}$ is. Now from Lemma~\ref{lemma_DimSubOb} we know that $\gamma_3(G)/\gamma_4(G) = D_p(G)/D_{p+1}(G)$ and $\gamma_3(\cor{O})/\gamma_4(\cor{O}) = D_p(\cor{O})/D_{p+1}(\cor{O})$.
The combination of  Lemma \ref{lemma:framed} with Section \ref{sec:invariants}(7) yields that $G$ is framed if and only if $\mathcal{O}$ is. 
\end{proof}

\bibliographystyle{amsalpha}
\bibliography{MIP}

\vspace*{2em}
\noindent
{\footnotesize
\begin{minipage}[t]{0.5\textwidth}
  Leo Margolis \\
Vrije Universiteit Brussel   \\
Department of Mathematics\\
Pleinlaan 2 \\
1050 Brussels \\
Belgium\\
  \quad\\
  E-mail:  \href{mailto:}{leo.margolis@vub.be}
\end{minipage}
\hfill
\begin{minipage}[t]{0.5\textwidth}
  Mima Stanojkovski\\
  Max-Planck-Institut f\"ur Mathematik in \\
  den Naturwissenschaften\\
  Inselstrasse 22\\
  04103 Leipzig\\
  Germany\\
  \quad\\
  E-mail: \href{mailto:mima.stanojkovski@mis.mpg.de}{mima.stanojkovski@mis.mpg.de}
\end{minipage}
}

\end{document}